\newtheorem{theorem}{Theorem}[section]
\newtheorem{prop}[theorem]{Proposition}
\newtheorem{lemma}[theorem]{Lemma}
\newtheorem{remark}[theorem]{Remark}
\newtheorem{notation}[theorem]{Notation}
\newtheorem{definition}[theorem]{Definition}
\newtheorem{conj}[theorem]{Conjecture}
\numberwithin{equation}{section}
\def\pf{{\it Proof:}~}
\def\R{{\bf\mathbb R} }
\def\S{{\bf\mathbb S} }
\renewcommand{\d}{\mathrm{d}}
\newcommand{\bracket}[1]{\ensuremath{\left<#1\right>}}
\begin{document}

\title[Volume growth and positive scalar curvature]{Volume growth and positive scalar curvature}
\author{Guodong Wei, Guoyi Xu,  Shuai Zhang}
\address{Guodong Wei\\School of Mathematics (Zhuhai)\\Sun Yat-sen University, Zhuhai, Guangdong\\ P. R. China, 519082}
\email{weigd3@mail.sysu.edu.cn}
\address{Guoyi Xu\\Department of Mathematical Sciences\\Tsinghua University, Beijing\\P. R. China, 100084}
\email{guoyixu@tsinghua.edu.cn}
\address{Shuai Zhang\\ Department of Mathematical Sciences\\Tsinghua University, Beijing\\P. R. China, 100084}
\email{zhangshu22@mails.tsinghua.edu.cn}
\date{\today}
\date{\today}

\begin{abstract}
For three dimensional complete,  non-compact Riemannian manifolds with non-negative Ricci curvature and uniformly positive scalar curvature,  we obtain the sharp linear volume growth ratio and the corresponding rigidity.  
\\[3mm]
Mathematics Subject Classification: 53C21, 53C23, 53C24
\end{abstract}
\thanks{G. Wei is partially supported by NSFC 12101619 and 12141106. G. Xu was partially supported by NSFC 12141103.}

\maketitle

\titlecontents{section}[0em]{}{\hspace{.5em}}{}{\titlerule*[1pc]{.}\contentspage}
	\titlecontents{subsection}[1.5em]{}{\hspace{.5em}}{}{\titlerule*[1pc]{.}\contentspage}
	\tableofcontents

\section{Introduction}

From the celebrated Bishop-Gromov Volume Comparison Theorem,  it is well-known that for any complete non-compact Riemannian manifold $(M^n, g)$ with Ricci curvature $Rc\geq 0$, the $n$-dim \textbf{asymptotic volume growth ratio} of $M^n$, denoted as\begin{align}
\mathrm{V}_{M^n, n}\vcentcolon= \lim_{r\rightarrow\infty}\frac{V(B_p(r))}{\omega_n r^n}, \nonumber 
\end{align}
is well-defined and is independent of the choice of $p$; where $B_p(r)$ is the open geodesic ball centered at $p\in M^n$ with radius $r> 0$,  $V(B_p(r))$ is the volume of $B_p(r)$,  and $\omega_n$ is the volume of unit ball in $\mathbb{R}^n$. Furthermore $\mathrm{V}_{M^n, n}\in [0, 1]$, and $\mathrm{V}_{M^n, n}= 1$ if and only if the manifold $M^n$ is isometric to $\mathbb{R}^n$ by the rigidity part of the Bishop-Gromov's volume comparison Theorem.

Furthermore,  uniformly positive scalar curvature $R$ possibly controls the volume growth better.  Gromov \cite[(5')]{Gromov-large-mflds}  made the following conjecture:
\begin{conj}[Volume conjecture]
{There is a universal constant $C(n)> 0$,  such that if $(M^n,  g)$ has $Rc\geq 0$ and the scalar curvature $R\geq 2$, then 
\begin{align}
\sup_{p\in M^n\atop r> 0} \frac{V(B_p(r))}{r^{n- 2}}\leq C(n).  \nonumber 
\end{align}
}
\end{conj}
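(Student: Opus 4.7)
The plan is to proceed by a dimension-reduction argument using stable $\mu$-bubbles, in the spirit of Gromov's band-width estimates and the subsequent refinements by Zhu and by Chodosh--Li. I would argue by contradiction: suppose along sequences $r_k \to \infty$ and $p_k \in M^n$ one has $V(B_{p_k}(r_k))/r_k^{n-2} \to \infty$. Bishop--Gromov already caps $V(B_{p_k}(r_k))$ by $\omega_n r_k^n$, so the failure of the conjecture forces a very fat geodesic annulus $A_k := B_{p_k}(r_k) \setminus B_{p_k}(r_k/2)$ carrying almost all of this volume; this is the quantity I will contradict.

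Next I would construct inside $A_k$ a smooth stable $\mu$-bubble $\Sigma_k^{n-1}$ separating the two boundary components of an auxiliary band of width $\asymp 1$ sitting in $A_k$, by minimizing $\mathcal A_h(\Omega) = |\partial^*\Omega| - \int_\Omega h\, \d V$ for a prescribed mean-curvature potential $h$ chosen to blow up at the boundary of that band. Existence and regularity are standard below the critical dimension and can be pushed through in general by a generic perturbation. The stability inequality, combined with the Gauss equation and a conformal change $\tilde g_k = u^{4/(n-3)} g_k|_{\Sigma_k}$ using the first eigenfunction $u>0$ of the stability operator (the Schoen--Yau device), converts the ambient hypothesis $R_M \ge 2$ and the near-constant value of $h$ into a pointwise bound $R_{\tilde g_k} \ge c(n) > 0$ on $\Sigma_k$, provided the band width is chosen as a small universal constant depending only on $n$.

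I would then run an induction on $n$. The inductive hypothesis should deliver a uniform diameter- or area-type bound on any $(n-1)$-manifold of positive scalar curvature arising in this way; for the base case $n=3$, Gauss--Bonnet applied to the closed surface $\Sigma_k^2$ of positive scalar curvature forces $\Sigma_k$ to be a finite union of spheres of uniformly bounded area, which is effectively the content of the $n=3$ theorem of the present paper. Translating the intrinsic bound on $\Sigma_k$ into an extrinsic volume bound via the coarea formula applied to the distance function to $\Sigma_k$, together with the linear thickness of the foliation by nearby $\mu$-bubbles, would yield $V(B_{p_k}(r_k)) \le C(n)\, r_k^{n-2}$, contradicting the standing assumption.

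The hard part will be closing the induction honestly. The ambient condition $Rc \ge 0$ is consumed only in the Bishop--Gromov step and does not transfer to $\Sigma_k$, which inherits solely a positive-scalar-curvature condition; thus the inductive hypothesis cannot be stated as \emph{Gromov's volume conjecture in dimension $n-1$}, but only as a much weaker statement about PSC manifolds that arise as $\mu$-bubble slices, and at each stage of descent further geometric information is lost. This asymmetry between the scalar-curvature content of $\mu$-bubble stability and the Ricci content needed for volume comparison is the principal obstruction and, to my knowledge, the reason Gromov's conjecture remains open for $n \ge 4$. Completing the program would require either an enhanced $\mu$-bubble identity that carries a Ricci-like remainder term descending to $\Sigma_k$, or a direct multi-scale covering argument that bypasses dimensional descent and exploits $Rc\ge 0$ simultaneously with $R\ge 2$ on the ambient $M^n$.
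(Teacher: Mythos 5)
There is a genuine gap, and in fact the target itself is misidentified: the statement you are asked to prove is Gromov's volume conjecture, which this paper states as an \emph{open conjecture} and does not prove in any dimension; the paper only establishes the sharp \emph{asymptotic} linear growth bound for $n=3$ (Theorem \ref{thm main-1 }), not the uniform bound $\sup_{p,r} V(B_p(r))/r^{n-2}\le C(n)$ for all $n$. Your proposal is a sketch, and you yourself concede in the last paragraph that the induction cannot be closed: the stability-plus-Schoen--Yau descent passes only a positive-scalar-curvature condition to the slice $\Sigma_k^{n-1}$, while the volume-comparison input ($Rc\ge 0$) does not descend, so there is no valid inductive hypothesis for $n-1\ge 3$. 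That is precisely why the conjecture is open for $n\ge 4$; acknowledging the obstruction does not remove it, and as written the argument proves nothing beyond what is already known.

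Even restricting to the parts you treat as routine, several steps would fail or are unsupported. First, the contradiction set-up is not justified: failure of the $\sup$ bound does not force ``a very fat annulus $B_{p_k}(r_k)\setminus B_{p_k}(r_k/2)$ carrying almost all of the volume''; Bishop--Gromov gives no such localization, and the excess volume could be spread over many scales or many ends. Second, for $n\ge 4$ the conformal change $\tilde g_k=u^{4/(n-3)}g_k|_{\Sigma_k}$ gives $R_{\tilde g_k}>0$ (or a lower bound) only in a conformal metric whose factor $u$ is uncontrolled, so no uniform area or diameter bound on $(\Sigma_k,g_k|_{\Sigma_k})$ follows; even in the base case $n=3$, where Gauss--Bonnet does bound the area, converting that intrinsic bound into an ambient volume bound is exactly where the paper must work hardest, using the end decomposition $N_k=E_k-\overline{E_{k+1}}$, the \emph{separating} property of the $\mu$-bubble (Proposition \ref{prop volume of separating bubble surfaces}), the area comparison between geodesic spheres and separating surfaces, which crucially uses $Rc\ge 0$ (Lemma \ref{lem surface area comparison}), and the height estimate via Cheeger--Colding almost splitting (Lemma \ref{lem annulus is not too far from p}). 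Your ``coarea formula applied to the distance function to $\Sigma_k$ together with linear thickness of the foliation by nearby $\mu$-bubbles'' asserts the conclusion of these lemmas without any substitute argument, and no such foliation is constructed. In short, the proposal neither proves the stated conjecture nor reproduces the paper's (different, $n=3$, asymptotic) result.
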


\begin{remark}\label{rem sectional curv is non-nega}
{Note Gromov \cite[2.B.]{Gromov-large-mflds} also sketched a proof of the following result: If $(M^n, g)$ has the sectional curvature $K\geq 0$ and the scalar curvature $R\geq 1$,  then there exists some $C(n)> 0$ such that $\displaystyle V(B_p(r))\leq C(n)\cdot r^{n- 2}$.  
}
\end{remark}

There is an asymptotic version of Gromov's volume conjecture as follows:
\begin{conj}[Volume growth conjecture]\label{conj volume growth of Gromov}
{There is a universal constant $C(n)> 0$,  such that if $(M^n,  g)$ has $Rc\geq 0$ and $R\geq 2$, then 
\begin{align}
\varlimsup_{r\rightarrow \infty} \frac{V(B_p(r))}{r^{n- 2}}\leq C(n), \quad \quad \quad \forall p\in M^n. \nonumber 
\end{align}
}
\end{conj}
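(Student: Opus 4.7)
\noindent\emph{Proof proposal.} Since the paper treats the case $n = 3$ (so $n - 2 = 1$ and the target is linear volume growth), I focus on that setting. The plan is to bound $A(t) := \mathrm{Area}(\partial B_p(t))$ uniformly in $t$ by means of a Gauss-Bonnet / second-variation identity on geodesic spheres, then integrate to obtain the linear volume estimate with a sharp constant, and finally trace equality for the rigidity.

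Fix $p \in M^3$ and set $u(x) := d(p, x)$. Off the cut locus, the distance spheres $\Sigma_t := u^{-1}(t)$ are smooth surfaces with principal curvatures $\kappa_1, \kappa_2$ and mean curvature $H = \Delta u = \kappa_1 + \kappa_2$. The $3$D Gauss equation together with the decomposition $R = 2\,Rc(\nabla u, \nabla u) + 2 K_M(T\Sigma_t)$ yields
\[
K_{\Sigma_t} = \tfrac{R}{2} - Rc(\nabla u, \nabla u) + \kappa_1 \kappa_2.
\]
Combined with Gauss-Bonnet $\int K_{\Sigma_t}\,dA = 2\pi \chi(\Sigma_t)$ and the standard variation identities $A'(t) = \int_{\Sigma_t} H\,dA$, $A''(t) = \int_{\Sigma_t}(2\kappa_1\kappa_2 - Rc(\nabla u, \nabla u))\,dA$, a direct cancellation produces the master identity
\[
A''(t) = 2\pi \chi(\Sigma_t) - \tfrac{1}{2} \int_{\Sigma_t} R\, dA + \int_{\Sigma_t} \kappa_1 \kappa_2\, dA.
\]
Invoking $R \geq 2$, the Laplacian comparison consequence $H \leq 2/t$ (which gives $\kappa_1 \kappa_2 \leq H^2/4 \leq t^{-2}$), and the fact that $Rc \geq 0$ forces $\chi(\Sigma_t)$ uniformly bounded (by at most twice the number of ends), this becomes
\[
A''(t) + (1 - t^{-2}) A(t) \leq 2\pi \chi_\infty \qquad \text{for } t \gg 1,
\]
where $\chi_\infty$ is the stabilized Euler characteristic at infinity. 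A standard ODE comparison / energy estimate on $(A(t) - 2\pi \chi_\infty)^2 + A'(t)^2$ then gives $\limsup_t A(t) \leq 2\pi \chi_\infty$, hence $V(B_p(r)) \leq 2\pi \chi_\infty\, r + o(r)$ as $r \to \infty$. The constant $C(3)$ produced this way is $2\pi \chi_\infty \leq 8\pi$, sharp on the cylinder $\mathbb{S}^2(1) \times \mathbb{R}$.

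The main obstacle is the cut locus of $p$: $u$ is only Lipschitz there, and $\Sigma_t$ may change topology as $t$ crosses $C_p$, so the Gauss-Bonnet step above is formal and must be justified. I would address this either by computing the singular contributions as a signed co-area defect supported on $C_p$, or by replacing $u$ with a smoother substitute --- a Busemann function of a chosen ray (whose sub-level sets are better behaved under $Rc \geq 0$), or a $\mu$-bubble foliation \`a la Gromov adapted to the positive scalar curvature regime, both of which produce level sets with cleaner regularity and topology. For the rigidity statement, equality in the master chain forces $R \equiv 2$, $\kappa_1 \equiv \kappa_2 \equiv 0$ (totally geodesic cross-sections of area exactly $4\pi$), and $Rc(\nabla u, \nabla u) \equiv 0$ along $\nabla u$; combined with $Rc \geq 0$ this produces a line on which $u$ realizes distance, and the Cheeger-Gromoll splitting theorem then yields the isometric splitting $M^3 \cong (\mathbb{S}^2, g_{\mathrm{round}}) \times \mathbb{R}$ with $R_{\mathbb{S}^2} \equiv 2$, the expected extremal model.
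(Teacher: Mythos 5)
Your proposal attempts a direct geodesic-sphere argument for the $n=3$ case, which is genuinely different from the paper's route (separating $\mu$-bubble surfaces, Cheeger-Colding almost splitting, a Heintze-Karcher-type volume comparison between geodesic spheres and $\mu$-bubbles, and a conformal Bonnet-Myers diameter bound). However, there are two serious gaps.

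First, the inequality $\kappa_1\kappa_2 \leq H^2/4 \leq t^{-2}$ is false. Laplacian comparison under $Rc\geq 0$ gives only the one-sided bound $H = \Delta u \leq 2/t$, not $|H|\leq 2/t$. Before the cut locus the mean curvature of a geodesic sphere can become arbitrarily negative: for example on $\mathbb{S}^2\times\mathbb{R}$, as the geodesic sphere about $p$ approaches the antipodal line $\{-p\}\times\mathbb{R}$, the in-sphere principal curvature tends to $-\infty$ (this is just $\cot(t)\to-\infty$ as $t\to\pi^-$ on the round factor). Thus $\int_{\Sigma_t}\kappa_1\kappa_2$ has no uniform $t^{-2}A(t)$ bound, and the ``master'' ODE $A''+(1-t^{-2})A\leq 2\pi\chi_\infty$ simply does not hold. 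Your master identity itself is correct where $u$ is smooth, but the term $\int\kappa_1\kappa_2\,dA$ is exactly the problematic one that the paper's machinery is designed to circumvent by passing to $\mu$-bubble surfaces, whose stability inequality provides the area bound $V(\Sigma_k)\leq 4\pi/(1-2\pi^2(1+\varepsilon)^2/L^2)$ without any control on the second fundamental form.

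Second, the cut locus and topology-of-level-sets issues are not a cleanup step to defer; they are the whole problem. You acknowledge them and sketch fixes (Busemann functions, $\mu$-bubble foliations) but do not carry any of them out. The Gauss-Bonnet step fails at singular level sets, $\chi(\Sigma_t)$ can fail to be uniformly bounded by the number of ends, and the co-area/ODE comparison needs a measured-replacement argument to cross $\mathrm{Cut}(p)$. Without these fixes your argument only establishes the case of manifolds with a pole, which is precisely the setting of \cite{Xu-IOCII} already cited in the introduction. The content of the paper is the machinery (Sections 3--5) that replaces geodesic spheres by well-behaved $\mu$-bubble surfaces and transfers the bound back via Lemma \ref{lem surface area comparison} (a one-sided Heintze-Karcher comparison needing only $Rc\geq 0$ and the separating property) together with the height estimate of Lemma \ref{lem annulus is not too far from p}. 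The rigidity discussion for the two-end case via Cheeger-Gromoll splitting is correct and matches the paper's Step (1) of Theorem \ref{thm sharp volume growth}, but the equality analysis for the one-end bound $2\pi$ is not addressed by the paper either (and the paper's Remark \ref{rem one end models} shows there is no rigidity there).
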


Following Gromov's strategy and combining Cheeger-Colding's theory of Ricci limit spaces from \cite{CC-I}, Anderson-Cheeger's compactness theorem in \cite{AC}, and Gromov's spherical Lipschitz bound theorem in \cite{Gromov-MI}, Zhu \cite{Zhu} succeeded in proving: If $(M^n,  g)$ has $Rc\geq 0, R\geq 2$ and $\displaystyle \inf_{q\in M^n}V(B_q(1))\geq \varepsilon> 0$, then $\displaystyle \varlimsup_{r\rightarrow\infty} \frac{V(B_p(r))}{r^{n- 2}}< \infty$ for any $p\in M^n$.

Using the property of some suitable positive harmonic function on $(M^3, g)$,  Conjecture \ref{conj volume growth of Gromov} for $n= 3$ is firstly proved by Munteanu-Wang \cite{MW-II} as follows: 
\begin{theorem}\nonumber 
{There is a universal constant $C> 0$,  such that if $(M^3,  g)$ has $Rc\geq 0$ and $R\geq 2$, then $\displaystyle \varlimsup_{r\rightarrow\infty} \frac{V(B_p(r))}{r}\leq C$ for any $p\in M^3$.
}
\end{theorem}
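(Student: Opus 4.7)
The plan is to introduce a positive harmonic function on $M^3$ whose level sets encode the asymptotic geometry, then extract a uniform area bound for these level sets from Gauss--Bonnet combined with $R \geq 2$. Concretely, I take $u$ to be the minimal positive Green's function $G(\cdot, p)$ on $M^3$, which exists under the non-parabolicity that accompanies faster-than-linear volume growth (the parabolic case should be disposed of separately via Green's functions on exhausting balls or a direct volume bound). Using the Li--Yau--type estimates available under $Rc \geq 0$, one has $G \asymp d(\cdot, p)^{-1}$ and $|\nabla G| \asymp d(\cdot, p)^{-2}$ at infinity, so $b := G^{-1}$ is a distance-like function with $|\nabla b| \asymp 1$, and the coarea formula gives
\begin{equation}
V(B_p(r)) \;\lesssim\; \int_0^{Cr} \mathrm{Area}(\Sigma_t)\, dt + O(1), \qquad \Sigma_t := \{b = t\},
\end{equation}
so the theorem reduces to proving $\mathrm{Area}(\Sigma_t) \leq C$ for large $t$.

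To bound the area I would combine two complementary identities on regular level sets. Gauss--Bonnet plus the Gauss equation applied to $\Sigma_t \subset M^3$ with unit normal $\nu = \nabla b/|\nabla b|$ and second fundamental form $II$ of mean curvature $H$ yield
\begin{equation}
2\pi \chi(\Sigma_t) \;=\; \int_{\Sigma_t}\left(\tfrac{R}{2} - Rc(\nu,\nu) + \tfrac{1}{2}(H^2 - |II|^2)\right) dA,
\end{equation}
so using $R \geq 2$ produces
\begin{equation}
\mathrm{Area}(\Sigma_t) \;\leq\; 2\pi \chi(\Sigma_t) + \int_{\Sigma_t} Rc(\nu,\nu)\, dA + \tfrac{1}{2}\int_{\Sigma_t}(|II|^2 - H^2)\, dA.
\end{equation}
Next, integrating the Bochner identity $\tfrac12\Delta|\nabla G|^2 = |\nabla^2 G|^2 + Rc(\nabla G, \nabla G)$ over the annular region $\{s \leq b \leq t\}$, together with the pointwise decomposition for a harmonic function
\begin{equation}
|\nabla^2 G|^2 \;=\; (|II|^2 + H^2)|\nabla G|^2 + 2\bigl|\nabla_\Sigma |\nabla G|\bigr|^2,
\end{equation}
reproduces the same $|II|^2$ and $Rc(\nu,\nu)$ integrands but with the opposite sign. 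A suitably weighted sum of the two identities should cancel the indefinite $|II|^2 - H^2$ and $Rc(\nu,\nu)$ contributions and yield a monotonicity-type inequality for a weighted area functional $F(t)$, from which $F(t) \leq C$ would follow and in turn control $\mathrm{Area}(\Sigma_t)$.

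The main obstacle, I expect, is twofold. Topologically, the bound above is only useful if $\chi(\Sigma_t)$ stays bounded above, so one must argue that for large $t$ the components of $\Sigma_t$ are topologically spheres (or at least that the negative part of $\chi(\Sigma_t)$ does not blow up); this should leverage a connectedness-at-infinity property forced by $Rc \geq 0$ together with the obstruction of $R > 0$ to stable higher-genus slices (akin to the Schoen--Yau obstruction). Analytically, the boundary contributions from integrating Bochner over $\{s \leq b \leq t\}$ must be absorbed into the main term using sharp $C^1$-asymptotics for $G$, and one must verify that the weighted functional $F$ truly decouples the cross-sign $II$-terms rather than merely relabeling them. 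Reconciling these two ingredients into a clean monotonicity is the technical heart of the argument.
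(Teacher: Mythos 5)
Your proposal is a genuinely different route from the paper's: the paper (Theorem~\ref{thm sharp volume growth}) proves the statement via Cheeger--Colding almost splitting plus separating $\mu$-bubble surfaces (area bound from Gauss--Bonnet applied to the $\mu$-bubble surface, diameter bound from a conformal Bonnet--Myers, a height estimate for the cylindrical region $N_k$, and a geodesic-sphere area comparison via the exponential-map Jacobian), whereas you follow a Green's-function / Bochner / Gauss--Bonnet strategy, which is in the spirit of the Munteanu--Wang proof that the paper cites as the original proof of this theorem. So the comparison to make is not "same approach," but "does the sketch actually close."

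It does not, and the central obstruction is not a side case. Your argument rests on the minimal positive Green's function $G(\cdot,p)$, hence on non-parabolicity, and you propose to "dispose of the parabolic case separately." But under $Rc\geq 0$, Li--Yau's criterion says $M$ is non-parabolic iff $\int_1^\infty t/V(B_p(t))\,dt<\infty$, which already fails whenever $V(B_p(t))=O(t^2)$. The theorem you are proving asserts $V(B_p(r))\lesssim r$, so every manifold in the regime the theorem is meant to control (linear, $r\log r$, quadratic, \ldots, up to $r^{2+\epsilon}$ growth) is parabolic and has no Green's function at all. The parabolic case is therefore essentially the entire theorem, not a technicality to be exhausted away, and the claimed asymptotics $G\asymp d^{-1}$, $|\nabla G|\asymp d^{-2}$ (which require roughly Euclidean volume growth) are not available either. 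Beyond that, two further steps are acknowledged but not supplied: (i) the claim that a weighted combination of Gauss--Bonnet and Bochner cancels the indefinite $|II|^2-H^2$ and $Rc(\nu,\nu)$ terms into a monotone quantity is exactly the hard analytic content, and without exhibiting the weight the sign structure is not under control; (ii) you have no bound on $\chi(\Sigma_t)$ from above, so a priori a high-genus level set makes the Gauss--Bonnet inequality vacuous. The paper's $\mu$-bubble route handles both of these head-on: Proposition~\ref{prop volume of separating bubble surfaces} controls both the Euler characteristic (via connectedness and the second-variation / Gauss--Bonnet pairing for a stable $\mu$-bubble surface) and its area, and Lemma~\ref{lem annulus is not too far from p} supplies the height estimate the coarea formula needs. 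Those are precisely the gaps in your sketch.
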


Later, Chodosh-Li-Stryker \cite{CLS} gave another proof of the volume growth conjecture for $n= 3$ by using Cheeger-Colding's almost splitting theorem \cite{CC-Ann} and $\mu$-bubble's diameter estimate.

Motivated to get a generalization of the Cohn-Vossen's inequality in higher dimension,  Yau has posed the following question \cite[Problem 9]{Yau}: For any complete Riemannian manifold $(M^n, g)$ with $Rc\geq 0$, and for any $p\in M^n$, is it true that 
\begin{align}
\lim\limits_{r\rightarrow \infty} \frac{\int_{B_p(r)}R}{r^{n- 2}} < \infty ? \nonumber 
\end{align}
Note that a positive answer to Yau's question implies Conjecture \ref{conj volume growth of Gromov}. 

Shi and Yau \cite{SY} verified Yau's conjecture on K\"ahler manifolds with additional assumption.  If there is $p\in (M^n, g)$ such that the corresponding exponential map $\exp_p: T_pM^n\rightarrow M^n$ is a diffeomorphism, we call $(M^n,  g)$ is a Riemannian manifold with a pole.  For $3$-dim Riemannian manifold $M^3$ with a pole and $Rc\geq 0$,  the third named author \cite{Xu-IOCII} shows that
$$\displaystyle \lim_{r\rightarrow\infty} \frac{\int_{B_p(r)}R}{r}= 8\pi (1- \mathrm{V}_{M^3, 3}).$$
This formula shows that general  $3$-dim complete,  noncompact manifolds with $Rc\geq 0$ possibly has some sharp asymptotic relationship between scalar curvature and volume growth.  In this paper,  we address this sharp asymptotic relationship reflected in Conjecture \ref{conj volume growth of Gromov}. 

Similar to the concept of asymptotic volume ratio,  for complete  non-compact Riemannian manifolds $(M^n, g)$ with $n\geq 3$, we define the \textbf{$(n-2)$-dim asymptotic volume growth ratio of $M^n$} as follows:
\begin{align}
\mathrm{V}_{M^n, n- 2}\vcentcolon= \varlimsup\limits_{r\rightarrow\infty}\frac{V(B_p(r))}{\omega_{n- 2}r^{n- 2}},  \quad \quad \quad \forall p\in M^n.  \nonumber 
\end{align}
It is easy to see that $\mathrm{V}_{M^n, n- 2}$ is well-defined and is independent of the choice of $p$.

In this paper,  we prove the sharp version of Conjecture \ref{conj volume growth of Gromov} for $n= 3$.  
\begin{theorem}\label{thm main-1 }
{For any complete non-compact Riemannian manifold $(M^3, g)$ with $Rc\geq 0$ and $R\geq 2$, 
\begin{enumerate}
\item[(1)].  if $(M^3, g)$ has only one end,  then $\displaystyle 0< \mathrm{V}_{M^3, 1}\leq 2\pi$,
\item[(2)]. if $(M^3,g)$ has at least two ends, then $(M^3,g)$ is isometric to a cylinder $S\times\mathbb{R}$ with $S$ being a closed surface of sectional curvature at least one, and $\displaystyle 0< \mathrm{V}_{M^3, 1}\leq 4\pi$. 
\item[(3)].  moreover $\mathrm{V}_{M^3, 1}= 4\pi$ if and only if $(M^3,g)$ is isometric to the cylinder $\mathbb{S}^2\times\mathbb{R}$,  where $\mathbb{S}^2$ is the unit round sphere in $\mathbb{R}^3$.
\end{enumerate}
}
\end{theorem}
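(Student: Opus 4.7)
My plan is to handle parts (2) and (3) via the Cheeger-Gromoll splitting theorem, and to attack (1) via a sharp Cohn-Vossen-type integral identity; the factor of two between the sharp one- and two-ended bounds should arise because a single end only extends to infinity in one direction.

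\textbf{Multi-ended case (2).} With $Rc\ge 0$ and at least two ends, the Cheeger-Gromoll splitting theorem yields an isometric splitting $M^3 = N^2\times\R$ with $N$ a closed surface. For a Riemannian product, $R_M = R_N = 2K_N$, so $R\ge 2$ forces $K_N\ge 1$; Gauss-Bonnet then gives $\chi(N)>0$, so $N\cong\mathbb{S}^2$ or $\mathbb{RP}^2$ and $\mathrm{Area}(N)\le 2\pi\chi(N)\le 4\pi$. Since $B_p(r)$ is essentially $N\times(-r,r)$ for large $r$, one gets $V(B_p(r))=2r\,\mathrm{Area}(N)+O(1)$, hence $\mathrm{V}_{M^3,1}=\mathrm{Area}(N)\le 4\pi$.

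\textbf{Rigidity (3).} Granting (1), the equality $\mathrm{V}_{M^3,1}=4\pi$ rules out the one-ended case (which would give $\le 2\pi$), so (2) applies and $\mathrm{Area}(N)=4\pi$. Combined with $K_N\ge 1$ and $\int_N K_N=2\pi\chi(N)\le 4\pi$, equality forces $\chi(N)=2$, $N\cong\mathbb{S}^2$, and $K_N\equiv 1$, i.e.\ $(N,g_N)$ is the unit round sphere.

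\textbf{One-ended case (1), approach.} The lower bound $\mathrm{V}_{M^3,1}>0$ follows from the Calabi-Yau linear volume lower bound for complete noncompact manifolds with $Rc\ge 0$. For the upper bound $\mathrm{V}_{M^3,1}\le 2\pi$, my plan is to generalize the third named author's identity \cite{Xu-IOCII} from 3-manifolds with pole to general one-ended 3-manifolds with $Rc\ge 0$:
\[
\limsup_{r\to\infty}\frac{\int_{B_p(r)} R}{r}\le 8\pi\bigl(1-\mathrm{V}_{M^3,3}\bigr).
\]
Given $R\ge 2$ and $\mathrm{V}_{M^3,3}\ge 0$, this yields $\mathrm{V}_{M^3,1}\le 2\pi\bigl(1-\mathrm{V}_{M^3,3}\bigr)\le 2\pi$.

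\textbf{Main obstacle.} The crucial difficulty is the non-pole version of the above identity. With a pole, the proof is essentially Gauss-Bonnet on large geodesic spheres in globally defined exponential polar coordinates; without one, $\partial B_p(r)$ meets the cut locus, whose contribution must be tightly controlled. A natural route is to exploit that $M$ is one-ended (so $\partial B_p(r)$ is connected for large $r$), apply Gauss-Bonnet to the regular part of the sphere, and bound the cut-locus boundary term via Bishop-Gromov comparison so that it vanishes after dividing by $r$. An alternative, closer in spirit to \cite{CLS}, is to construct $\mu$-bubble foliations exhausting the end by $2$-surfaces with area $\le 4\pi+o(1)$ and integrate via coarea. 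Preserving the sharp constant $4\pi$, rather than mere boundedness, will be the delicate point in either strategy.
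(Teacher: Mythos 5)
Your parts (2) and (3) are correct and essentially coincide with the paper's argument: Cheeger-Gromoll splitting, $K_N\geq 1$ from $R\geq 2$, Bishop-Gromov (equivalently Gauss-Bonnet) to bound $\mathrm{Area}(N)\leq 4\pi$, and rigidity of the area bound to force $N\cong\mathbb{S}^2$ round when equality holds. Your observation that part (1) rules out the one-ended case when $\mathrm{V}_{M^3,1}=4\pi$ is exactly how the paper runs the rigidity step.

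The gap is in part (1), which is the heart of the theorem. The inequality you propose to prove,
\begin{align*}
\limsup_{r\to\infty}\frac{\int_{B_p(r)} R}{r}\leq 8\pi\bigl(1-\mathrm{V}_{M^3,3}\bigr),
\end{align*}
is (a sharp form of) Yau's Problem 9 for $n=3$. As the paper's introduction points out, a positive answer to Yau's question already implies the qualitative volume growth conjecture; the sharp identity is only known when $p$ is a pole (by \cite{Xu-IOCII}), precisely because the cut locus obstructs the Gauss-Bonnet argument on geodesic spheres. Your sketch of how to control the cut-locus contribution via ``Bishop-Gromov comparison so that it vanishes after dividing by $r$'' is not a proof; there is no known way to extract a uniform bound that preserves the sharp constant $8\pi$, and in fact proving this would settle a hard open problem. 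So, as written, your proposed route for (1) reduces a theorem to an open conjecture.

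Your alternative suggestion, exhausting the end by $\mu$-bubble $2$-surfaces of area $\leq 4\pi+o(1)$ and integrating via coarea, is in the right direction and is what the paper actually does, but there are several nontrivial steps you have not supplied. First, a sharp area bound $\leq 4\pi/(1-O(L^{-2}))$ for a \emph{separating} $\mu$-bubble surface in each annulus (Proposition \ref{prop volume of separating bubble surfaces}), which needs the careful choice of the prescribing function $h_k$ and the simply-connectedness assumption to guarantee the separating property. Second, and crucially, a height estimate (Lemma \ref{lem annulus is not too far from p}) saying that the annular region $N_k$ has width $L+O(1)$, which is obtained from the Cheeger-Colding almost splitting theorem combined with a uniform diameter bound on the $\mu$-bubble surfaces (the conformal Bonnet-Myers type estimate of Lemma \ref{lem the diameter upper bound of 2-dim surface-SY}); without this one cannot multiply area by height to control $V(N_k)$ sharply. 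Third, a comparison lemma relating the area of $\partial B_p(s)\cap N_k$ to the area of a separating hypersurface (Lemma \ref{lem surface area comparison}), since the coarea argument produces geodesic-sphere slices rather than bubble slices. Finally, the $\mu$-bubble machinery is set up for simply connected $M$, so the case $\pi_1(M^3)\neq\{1\}$ must be handled separately by passing to the universal cover and using $\#\pi_1\geq 2$ to regain a factor of $2$. Your intuition that the factor of two between the one-ended and two-ended sharp bounds comes from the end going off ``in only one direction'' is exactly right, but it is realized only after all of these ingredients are in place.
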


\begin{remark}\label{rem uniqueness of rigidity}
{Meanwhile,  for any $0<b<1$,  we construct distinct noncompact manifolds $(M_i^3,g_i)$ with  $Rc\geq 0$ and $R\geq 2$,  where $i=1, 2$,  such that   $\displaystyle \mathrm{V}_{M_i^3, 1}=4\pi b$. See Remark \ref{rem two end models} and Remark \ref{rem one end models} for details.
}
\end{remark}

The proof of Theorem \ref{thm main-1 } follows the strategy of \cite{CLS} closely.  

From Cheeger-Colding's almost splitting theorem,  the $L$-neighborhood (where $L> 0$)  of some part of a ray (which is $[(k- 5)\cdot L,  (k+ 5)\cdot L]$ away from $p$,  where $k>> 1$),  is $\delta$-Gromov-Hausdorff close to,  the product of an interval with a length space (a cylindrical region); where $\delta> 0$ goes to $0$ as $k\rightarrow \infty$.  On the other hand,  from theory of $\mu$-bubble surfaces,  we can get the area upper bound of the $\mu$-bubble surfaces in the above almost cylindrical region in term of $L$.  

\begin{remark}\label{rem key observations}
{There are two key observations in this paper: 
\begin{enumerate}
\item[(A)].  The area of $\mu$-bubble surfaces is bounded by the area of unit round sphere $\mathbb{S}^2$,  as $L\rightarrow \infty$.  This bound is sharp in asymptotic sense,  where asymptotic means $L\rightarrow \infty$.
\item[(B)].  The area of the intersection of geodesic spheres with the neighborhood of the ray,  has an upper bound in term of the area bound of corresponding $\mu$-bubble surfaces.
\end{enumerate}
}
\end{remark}

When computing $\mathrm{V}_{M^3, 1}$,  we take $r= k\cdot L$ for fixed $L$,  and express the upper bound of $\mathrm{V}_{M^3, 1}$ as a limit  in terms of $\delta,  L$ with respect to $k\rightarrow \infty$(see (\ref{final ineq with L and delta})).  Finally we let $L\rightarrow\infty,  \delta\rightarrow 0$ to obtain the sharp linear volume growth ratio.  
 
In the end of this section,  we sketch the proof of Theorem \ref{thm main-1 } in more details as follows.

We firstly introduce the following notation, which is used in the rest of the paper. 
\begin{notation}\label{notation Ek and Nk}
{For a complete Riemannian manifold $(M^n, g)$ and $L> 0$,  if $(M^n, g)$ has only one end,  the unique unbounded connected component of $M^n- \overline{B_p(kL)}$ is denote by $E_k(p, L)$ . And when the context is clear (or $p, L$ are fixed), we use $E_k$ to denote $E_k(p, L)$ for simplicity. Furthermore, we define $N_k\vcentcolon= E_k- \overline{E_{k+ 1}}$. 
}
\end{notation}

\begin{enumerate}
\item[(1)]. Cutting the whole manifold into a sequence of cylindrical regions $\{N_k\}_{k\in \mathbb{Z}^+}$ with respect to the scale $L> 0$, reduce the asymptotic upper bound of geodesic ball's volume to the upper bound of the volume of $N_k$ (see Figure \ref{fig:cylindrical region in introduction}).
\item[(2)].  To bound the volume of $N_k$,  using the co-area formula,  we are reduced to get the upper bound of the `height' of $N_k$ (which is the oscillation of the distance between points in $N_k$ and $p$),  and the upper bound of the area of the intersection of geodesic sphere with $N_k$ (this is done in Section \ref{sec main proof}).  

\begin{figure}[H]
    \centering
    \includegraphics[width=0.5\linewidth]{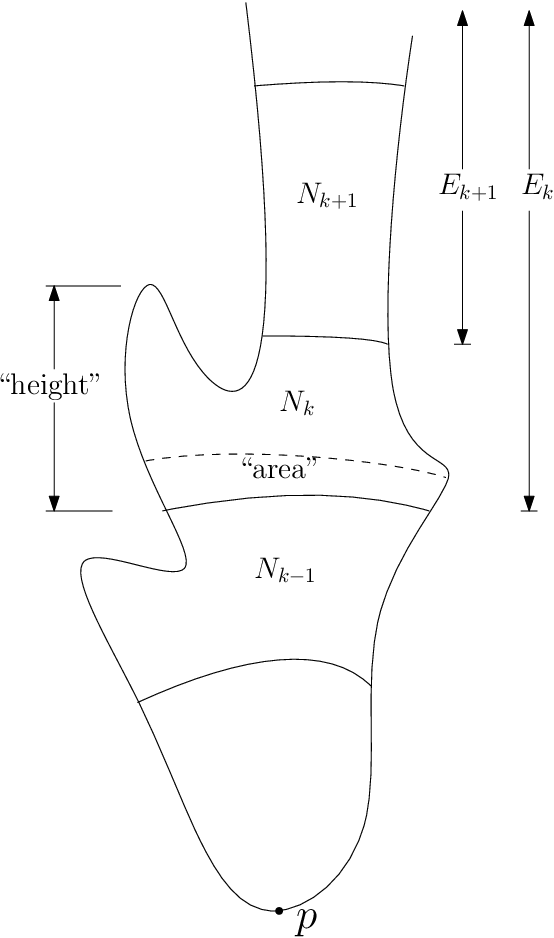}
    \caption{Picture for (1) and (2)}
    \label{fig:cylindrical region in introduction}
\end{figure}

\item[(3)].  The asymptotic equivalence between the `height' of $N_k$ and the scale $L$,  is showed in Section \ref{sec radius est for annulus}.  And this asymptotic equivalence relies on the uniform diameter bound of suitable subset of geodesic sphere,  which is the intersection of geodesic sphere with $N_k$.  Note the uniform diameter bound (denoted as $c$ in most of the rest argument) is independent of the large scale $L$ (see Figure \ref{fig:radius estimate in introduction}).
\begin{enumerate}
\item[(3.a)].  To prove the uniform diameter bound of the intersection of geodesic sphere with $N_k$,  we use Cheeger-Colding's almost splitting theorem to reduce it to the uniform diameter bound of corresponding,  separating $\mu$-bubble surfaces.  
\item[(3. b)].  The uniformly positive scalar curvature assumption and the second variation formula for $\mu$-bubble surface,  together a conformal version of Bonnet-Myers' type estimate (obtained in Section \ref{sec bubble hypersurfaces and diameter}),  are used to get such diameter bound of the above $\mu$-bubble surfaces in Section \ref{sec separating bubble surfaces}.
\end{enumerate}

\begin{figure}[H]
    \centering
    \includegraphics[width=\linewidth]{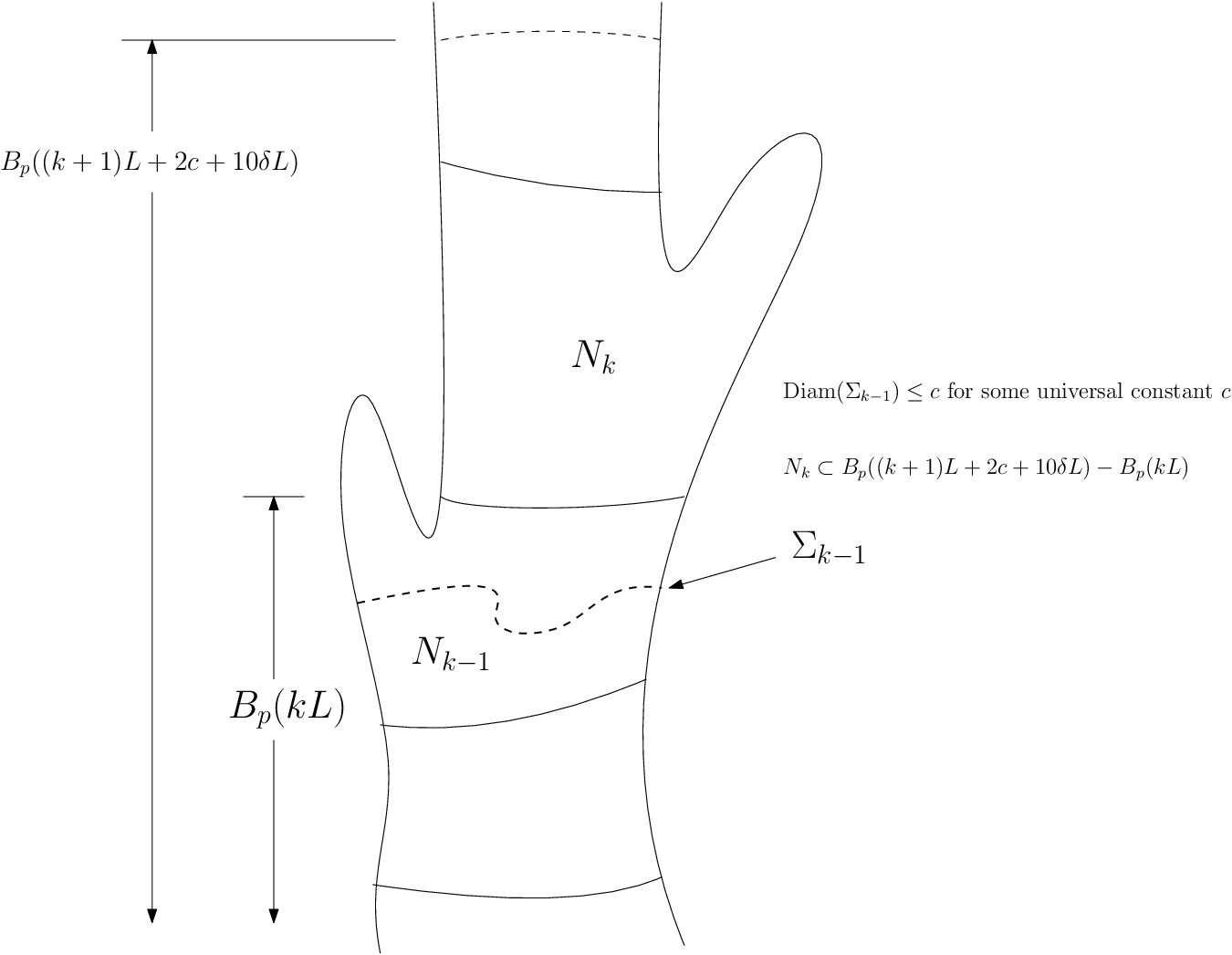}
    \caption{Picture for (3)}
    \label{fig:radius estimate in introduction}
\end{figure}

\item[(4)].  The upper bound of the area of the intersection of geodesic sphere with $N_k$ is obtained from the following two results:
\begin{enumerate}
\item[(4.a)] The volume comparison for suitable part of geodesic spheres and corresponding separating $\mu$-bubble surfaces,  is provided in Section \ref{sec area compa}.  This volume comparison result is in the similar spirit of Bishop-Gromov's volume comparison Theorem (also see \cite{HK}),  and holds for general dimensions (see Figure \ref{fig:area estimate in introduction}). 

\begin{figure}[H]
    \centering
    \includegraphics[width=0.7\linewidth]{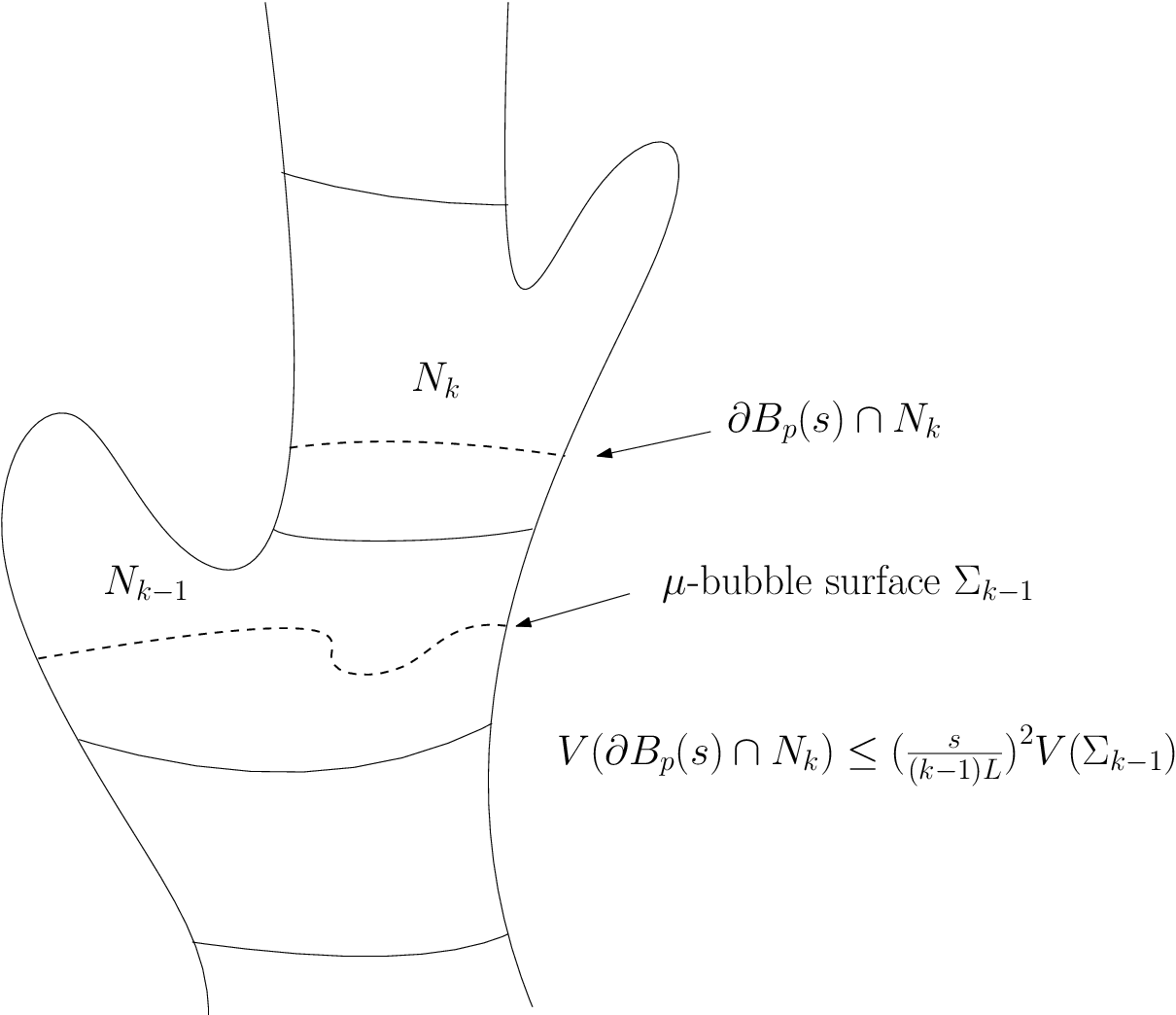}
    \caption{Picture for (4.a)}
    \label{fig:area estimate in introduction}
\end{figure}

\item[(4.b)] Based on some results about separating $\mu$-bubble surfaces in \cite{CLS} and \cite{CLS-0},  the area bound of separating $\mu$-bubble surfaces is obtained in Section \ref{sec separating bubble surfaces}.
\end{enumerate}
\end{enumerate}

\begin{remark}\label{rem key points}
{Another key point is: all estimates (including some `height' estimates of cylindrical region) related to area or volume obtained in this paper are asymptotically sharp with respect to $L\rightarrow \infty$.
}
\end{remark}

There are some smooth functions required in the construction of $\mu$-bubbles and corresponding $\mu$-bubble hypersurfaces, and we put the existence result of those functions into Appendix \ref{sec appe exist}. 

For readers' convenience, we provide a diagram as follows, which is the structure of the proof of Theorem  \ref{thm main-1 } (see Figure \ref{fig:structure of proof}).

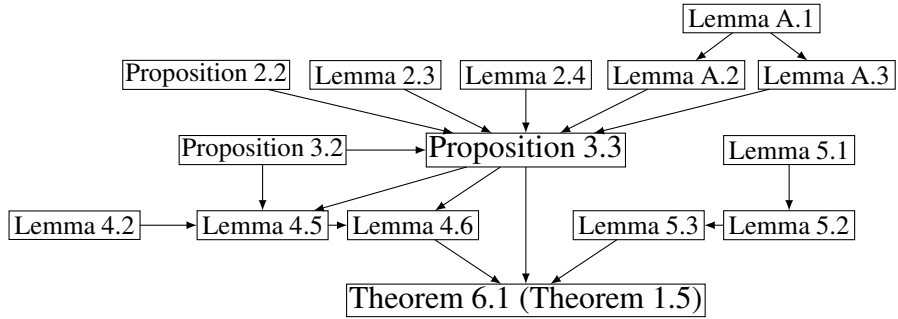
\begin{figure}[H]
    \centering
\begin{tikzpicture}
\node [draw,outer sep=0,inner sep=1,minimum size=10, font=\fontsize{12}{0}\selectfont] (v2) at (-4.5,1) {Theorem \ref{thm sharp volume growth}  (Theorem \ref{thm main-1 })};
\node [draw,outer sep=0,inner sep=1,minimum size=10, font=\fontsize{12}{0}\selectfont] (v1) at (-4.5,3) {Proposition \ref{prop volume of separating bubble surfaces}};

\node [draw,outer sep=0,inner sep=1,minimum size=10, font=\fontsize{10}{0}\selectfont] (v6) at (-8,3) {Proposition \ref{prop connected property of annulus}};
\node [draw,outer sep=0,inner sep=2,minimum size=10, font=\fontsize{10}{0}\selectfont] (v12) at (-10.5,2) {Lemma \ref{lem connected of some set}};
\node [draw,outer sep=0,inner sep=2,minimum size=10, font=\fontsize{10}{0}\selectfont] (v10) at (-8,2) {Lemma \ref{lem diam of partial geod spheres}};
\node [draw,outer sep=0,inner sep=2,minimum size=10, font=\fontsize{10}{0}\selectfont] (v5) at (-6,2) {Lemma \ref{lem annulus is not too far from p}};

\node [draw,outer sep=0,inner sep=2,minimum size=10, font=\fontsize{10}{0}\selectfont] (v11) at (-1,3) {Lemma \ref{lem det of positive semi definite matrix}};
\node [draw,outer sep=0,inner sep=2,minimum size=10, font=\fontsize{10}{0}\selectfont] (v3) at (-1,2) {Lemma \ref{lem volume of surface in term of polar Jac}};
\node [draw,outer sep=0,inner sep=2,minimum size=10, font=\fontsize{10}{0}\selectfont] (v4) at (-3,2) {Lemma \ref{lem surface area comparison}};

\node [draw,outer sep=0,inner sep=1,minimum size=10, font=\fontsize{10}{0}\selectfont] (v8) at (-8.75,4) {Proposition \ref{prop existence of bubbles}};
\node [draw,outer sep=0,inner sep=2,minimum size=10, font=\fontsize{10}{0}\selectfont] (v7) at (-6.5,4) {Lemma \ref{lem 1st and 2nd vari of bubble}};
\node [draw,outer sep=0,inner sep=2,minimum size=10, font=\fontsize{10}{0}\selectfont] (v9) at (-4.5,4) {Lemma \ref{lem the diameter upper bound of 2-dim surface-SY}};

\node [draw,outer sep=0,inner sep=2,minimum size=10, font=\fontsize{10}{0}\selectfont] (v14) at (-1.5,4.75) {Lemma \ref{lem existence of bump function}};
\node [draw,outer sep=0,inner sep=2,minimum size=10, font=\fontsize{10}{0}\selectfont] (v15) at (-2.5,4) {Lemma \ref{lem function for Caccioppoli set}};
\node [draw,outer sep=0,inner sep=2,minimum size=10, font=\fontsize{10}{0}\selectfont] (v16) at (-0.5,4) {Lemma \ref{lem existence of smooth function}};

\draw [-latex] (v1) edge (v2);
\draw [-latex] (v3) edge (v4);
\draw [-latex] (v4) edge (v2);
\draw [-latex] (v5) edge (v2);
\draw [-latex] (v1) edge (v5);
\draw [-latex] (v7) edge (v1);
\draw [-latex] (v8) edge (v1);
\draw [-latex] (v9) edge (v1);
\draw [-latex] (v10) edge (v5);
\draw [-latex] (v11) edge (v3);
\draw [-latex] (v6) edge (v10);
\draw [-latex] (v1) edge (v10);
\draw [-latex] (v6) edge (v1);
\draw [-latex ] (v12) edge (v10);
\draw [-latex] (v14) edge (v15);
\draw [-latex] (v14) edge (v16);
\draw [-latex] (v15) edge (v1);
\draw [-latex] (v16) edge (v1);
\end{tikzpicture}
  \caption{Structure of the proof of Theorem \ref{thm main-1 }}
   \label{fig:structure of proof}
\end{figure}

\section{The $\mu$-bubble surfaces and diameter estimate}\label{sec bubble hypersurfaces and diameter}

The soap bubble was firstly proposed by Schoen to Gromov \cite[Section $5\frac{5}{6}$]{Gromov-Macro}.  We recall its definition as follows.

\begin{definition}\label{def bubble and bubble surface}
{Assume $N$ is a smooth Riemannian manifold with boundary $\partial N= \partial_{-}N\cup \partial_{+}N$, where $\partial_{-}N\cap \partial_{+}N= \emptyset$ and $\partial_{\pm}N\neq \emptyset$. Fix a smooth function $u> 0$ on $N$ and a smooth function $h$ on $\mathring{N}$ with $h\rightarrow \pm \infty$ on $\partial_{\pm}N$, where $\mathring{N}$ is the interior of $N$. Choose a Caccioppoli set $\mathcal{C}\subseteq N$ such that
\begin{align}
\partial \mathcal{C}- \partial_+ N \ \text{is smooth},\quad \quad \partial \mathcal{C}- \partial_+ N\subseteq \mathring{N}\quad \quad \text{and} \quad \quad \partial_+ N\subseteq \partial \mathcal{C}. \label{assumption on Cacci set C}
\end{align}
Define $\mathscr{C}(N, \mathcal{C})$ as the set of all Caccioppoli sets $\Omega\subseteq N$ with $\Omega\triangle \mathcal{C}\subseteq \mathring{N}$.

We define $\mathcal{A}: \mathscr{C}(N, \mathcal{C})\rightarrow \mathbb{R}$ as follows:
\begin{align}
\mathcal{A}(W)= \int_{ \partial^* W- \partial N}u d\mathcal{H}^{n-1}- \int_N (\chi_W- \chi_{\mathcal{C}})\cdot h\cdot ud\mathcal{H}^{n},\quad \quad \quad \forall W\in \mathscr{C}(N, \mathcal{C}),\nonumber 
\end{align}
where $\partial^* W$ is the reduced boundary of $W$. 
 
 If $\displaystyle \mathcal{A}(\Omega)= \inf_{W\in \mathscr{C}(N, \mathcal{C})}\mathcal{A}(W)$ for some $\Omega\in \mathscr{C}(N, \mathcal{C})$, we call $\Omega$ as a \textbf{$\mu$-bubble with respect to $(h, u, \mathcal{C})$ in $(N, \partial_{-}N, \partial_{+}N)$} (See Figure \ref{figure: bubble-surface}). 

For any $\mu$-bubble $\Omega$ with respect to $(h, u, \mathcal{C})$ in $(N, \partial_{-}N, \partial_{+}N)$, we call any connected component of $\partial\Omega- \partial N$ as a \textbf{$\mu$-bubble hypersurface with respect to $(h, u, \mathcal{C})$ in $(N, \partial_{-}N, \partial_{+}N)$}. When $\partial\Omega- \partial N$ is two dimensional, we also call its connected components as $\mu$-bubble surfaces.
}
\end{definition}

\begin{figure}[H]
    \centering
    \includegraphics[width=0.8\linewidth]{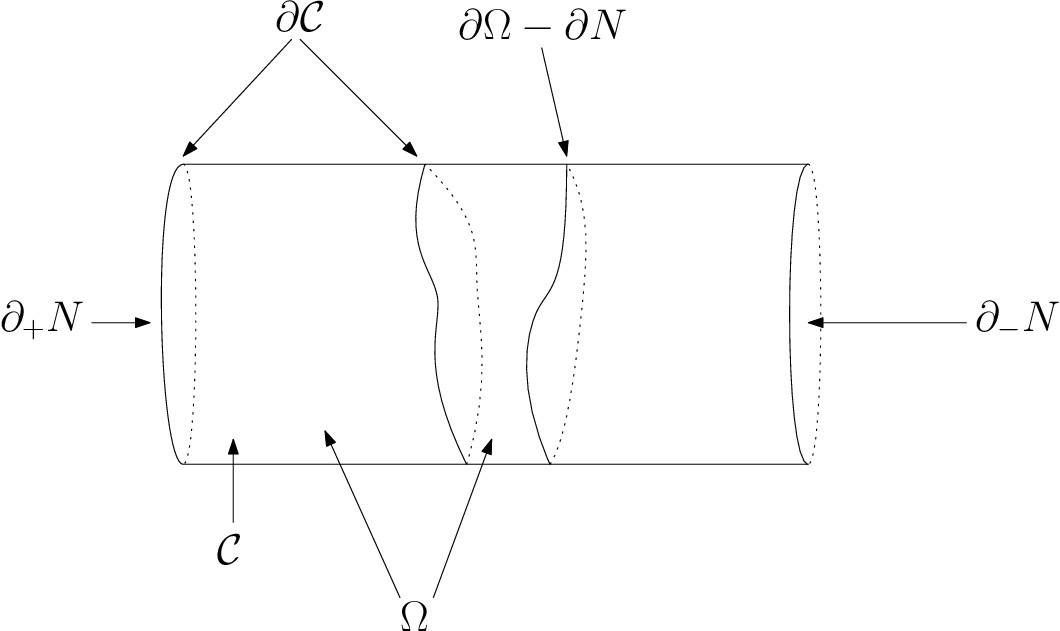}
    \caption{The $\mu$-bubble and $\mu$-bubble surface}
    \label{figure: bubble-surface}
\end{figure}

The following existence result of $\mu$-bubble is \cite[Proposition $12$]{CL} (also see \cite[Proposition $2.1$]{Zhu-bubble})
\begin{prop}\label{prop existence of bubbles}
{There is a $\mu$-bubble $\Omega$ with respect to $(h, u, \mathcal{C})$ in $(N, \partial_{-}N, \partial_{+}N)$, and any $\mu$-bubble hypersurface corresponding to $\Omega$ is a smooth hypersurface.
}
\end{prop}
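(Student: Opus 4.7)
The plan is to apply the direct method of the calculus of variations to $\mathcal{A}$ on the admissible class $\mathscr{C}(N, \mathcal{C})$, and then invoke classical regularity theory for almost minimizers of the weighted perimeter. Since $\mathcal{C}$ itself is an admissible competitor and $\mathcal{A}(\mathcal{C}) = \int_{\partial^* \mathcal{C} - \partial N} u\, d\mathcal{H}^{n-1}$ is finite (as the singular part of $\partial \mathcal{C}$ lies in $\partial_+ N$), the infimum is finite, so a minimizing sequence $\{W_k\} \subseteq \mathscr{C}(N, \mathcal{C})$ exists.

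The heart of the argument, and the main obstacle, is an a priori barrier showing that the free boundaries $\partial^* W_k - \partial N$ stay inside a fixed compact subdomain of $\mathring{N}$. This is where the divergence $h \to \pm \infty$ on $\partial_{\pm} N$ is essential: if $W_k$ were smaller than $\mathcal{C}$ in a neighborhood of $\partial_+ N$, the sign $\chi_{W_k} - \chi_{\mathcal{C}} = -1$ paired with $h \to +\infty$ and $u > 0$ would force the bulk term $-\int_N (\chi_{W_k} - \chi_{\mathcal{C}}) h u\, d\mathcal{H}^n$ to blow up to $+\infty$; analogously for enlargements near $\partial_- N$. Quantitatively, I would foliate a one-sided collar of $\partial_{\pm} N$ by level sets of a smooth defining function, apply the coarea formula to write $\mathcal{A}(W_k) - \mathcal{A}(W_k \cap \{\mathrm{dist}(\cdot, \partial_- N) > s\})$ as a one-dimensional integral, and exploit monotonicity of $h$ near the boundary to conclude that cutting along a suitable level does not increase $\mathcal{A}$. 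This yields a uniform $\varepsilon > 0$ with $W_k \triangle \mathcal{C} \Subset \{\mathrm{dist}(\cdot, \partial N) > \varepsilon\}$ for all large $k$, after passing to a suitable modification of the minimizing sequence without loss of energy.

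Once the sequence is confined to a relatively compact subdomain $K \Subset \mathring{N}$ where $u$ and $h$ are bounded and $u$ is bounded away from zero, the (unweighted) perimeters of $W_k$ in $K$ are uniformly bounded by $\mathcal{A}(W_k) + \|hu\|_{L^\infty(K)}\cdot |K|$ divided by $\inf_K u$. Standard BV compactness then yields $\chi_{W_k} \to \chi_\Omega$ in $L^1_{\mathrm{loc}}(N)$ along a subsequence, with $\Omega \in \mathscr{C}(N, \mathcal{C})$. Lower semicontinuity of the smooth-weighted perimeter and $L^1$-continuity of the bulk term on $K$ give $\mathcal{A}(\Omega) \leq \liminf_k \mathcal{A}(W_k) = \inf \mathcal{A}$, so $\Omega$ attains the infimum.

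For the final smoothness statement, since $hu$ is smooth on $\mathring{N}$, the minimizer $\Omega$ is an $(\Lambda, r_0)$-almost minimizer of the $u$-weighted perimeter in any compact subset of $\mathring{N}$: local variations change the bulk term by at most a constant times the variation volume. The De Giorgi--Federer--Almgren regularity theorem, in its weighted form (as extended by Tamanini and others to smooth positive density), then shows that $\partial \Omega - \partial N$ is a smooth embedded hypersurface away from a singular set of Hausdorff dimension at most $n - 8$; in the dimension range relevant to this paper, this set is empty, and every connected component of $\partial \Omega - \partial N$ is smooth, as claimed.
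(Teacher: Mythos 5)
The paper itself gives no proof here, citing Chodosh--Li (Proposition 12 of \cite{CL}) and Zhu (Proposition 2.1 of \cite{Zhu-bubble}); your sketch faithfully reconstructs the standard direct-method argument those references use: a barrier estimate exploiting $h \to \pm\infty$ to confine minimizing sequences away from $\partial N$, BV compactness plus lower semicontinuity of the $u$-weighted perimeter and $L^1$-continuity of the bulk term, and $\Lambda$-minimizer regularity theory. Two small points worth making explicit if you wrote this out in full: the almost-minimizer theory gives only $C^{1,\alpha}$ regularity, and you must then bootstrap to $C^\infty$ via the Euler--Lagrange equation, which says the hypersurface has prescribed (weighted) mean curvature $h - u^{-1}\langle \nabla u, \vec n\rangle$, smooth since $u, h$ are smooth on $\mathring N$; and the vanishing of the Almgren singular set requires $n \le 7$, which holds for the paper's application ($n = 3$) but is not visible in the statement of the proposition.
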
\qed

We use the second variation formula for $\mu$-bubble hypersurface $\Sigma$ repeatedly (see \cite[Lemma $13$ and $14$]{CL}), the statement is as follows.
\begin{lemma}\label{lem 1st and 2nd vari of bubble}
{If $\Sigma$ is a $\mu$-bubble hypersurface with respect to $(h, u, \mathcal{C})$ in $(N, \partial_{-}N, \partial_{+}N)$,  where $N$ is $n$-dim Riemannian manifold with $n\geq 3$; then for any $\psi\in C^\infty(\Sigma)$,  we have
\begin{align}
&\int_\Sigma |\nabla_\Sigma \psi|^2 u- \frac{1}{2}(R_N- R_\Sigma+ |\mathring{A}_\Sigma|^2)\cdot \psi^2 u+ (\Delta_N u- \Delta_\Sigma u)\psi^2 \nonumber \\
&-\frac{1}{2}u^{-1}\langle \nabla_N u, \vec{n}\rangle^2\psi^2- \frac{1}{2}\{h^2+ 2\langle \nabla_N h, \vec{n}\rangle\}\psi^2 u \geq 0;\label{2nd vari ineq-higher dim}
\end{align}
where $\mathring{A}_\Sigma$ is the traceless part of second fundamental form of $\Sigma\subseteq N$,  $\vec{n}$ is the unit normal vector of $\Sigma\subseteq N$,  and $R_N, R_\Sigma$ are the intrinsic scalar curvature of $N$ and $\Sigma$ respectively.
}
\end{lemma}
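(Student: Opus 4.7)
The plan is to derive the stated inequality from the minimality of $\Omega$ via standard first and second variation formulas, followed by algebraic rewrites using the Euler-Lagrange equation, the trace of the Gauss equation, and the elementary dimensional inequality $\tfrac{n}{2(n-1)}\geq\tfrac{1}{2}$. For $\psi\in C^\infty(\Sigma)$ with compact support in the smooth locus of $\partial\Omega\setminus\partial N$, I would take a one-parameter family $\{\Omega_t\}\subset\mathscr{C}(N,\mathcal{C})$ whose boundary evolves along $\psi\vec{n}$ on $\Sigma$. Minimality forces $\mathcal{A}'(0)=0$, which yields the Euler-Lagrange equation
\begin{equation*}
H_\Sigma u+\langle\nabla_N u,\vec{n}\rangle=hu,
\end{equation*}
and $\mathcal{A}''(0)\geq 0$ is the starting inequality.

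The standard weighted second variation of $\int_{\Sigma_t}u$ produces the integrand $u|\nabla_\Sigma\psi|^2-u(|A_\Sigma|^2+\mathrm{Ric}_N(\vec{n},\vec{n}))\psi^2+\mathrm{Hess}_N u(\vec{n},\vec{n})\psi^2+(uH_\Sigma^2+2H_\Sigma\langle\nabla_N u,\vec{n}\rangle)\psi^2$, while the second variation of the volume integral $\int_{\Omega_t}hu$ produces $(H_\Sigma hu+u\langle\nabla_N h,\vec{n}\rangle+h\langle\nabla_N u,\vec{n}\rangle)\psi^2$. Subtracting these, substituting Euler-Lagrange into the $H_\Sigma$-quadratic terms, and invoking the intrinsic-extrinsic Laplacian identity
\begin{equation*}
\Delta_N u=\Delta_\Sigma u+\mathrm{Hess}_N u(\vec{n},\vec{n})+H_\Sigma\langle\nabla_N u,\vec{n}\rangle,
\end{equation*}
all of the $H_\Sigma$-linear cross terms cancel cleanly and one obtains the intermediate inequality
\begin{align*}
0\leq\int_\Sigma\Bigl\{&u|\nabla_\Sigma\psi|^2-u\bigl(|A_\Sigma|^2+\mathrm{Ric}_N(\vec{n},\vec{n})\bigr)\psi^2+(\Delta_N u-\Delta_\Sigma u)\psi^2\\&-h\langle\nabla_N u,\vec{n}\rangle\psi^2-u\langle\nabla_N h,\vec{n}\rangle\psi^2\Bigr\}.
\end{align*}

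To pass to the stated form, I would apply the traced Gauss equation together with $|A_\Sigma|^2=|\mathring A_\Sigma|^2+\tfrac{1}{n-1}H_\Sigma^2$, rewriting
\begin{equation*}
|A_\Sigma|^2+\mathrm{Ric}_N(\vec{n},\vec{n})=\tfrac{1}{2}\bigl(R_N-R_\Sigma+|\mathring A_\Sigma|^2\bigr)+\tfrac{n}{2(n-1)}H_\Sigma^2.
\end{equation*}
Since $\tfrac{n}{2(n-1)}\geq\tfrac{1}{2}$ for $n\geq 2$, replacing $-\tfrac{n}{2(n-1)}uH_\Sigma^2$ by the larger quantity $-\tfrac{1}{2}uH_\Sigma^2$ preserves the inequality. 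Finally, Euler-Lagrange gives $H_\Sigma=h-u^{-1}\langle\nabla_N u,\vec{n}\rangle$, hence
\begin{equation*}
-\tfrac{1}{2}uH_\Sigma^2=-\tfrac{1}{2}uh^2+h\langle\nabla_N u,\vec{n}\rangle-\tfrac{1}{2}u^{-1}\langle\nabla_N u,\vec{n}\rangle^2,
\end{equation*}
and the $h\langle\nabla_N u,\vec{n}\rangle$ term cancels the surviving $-h\langle\nabla_N u,\vec{n}\rangle$ in the intermediate inequality, leaving exactly the stated form.

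The only nontrivial ingredient is the dimensional inequality $\tfrac{n}{2(n-1)}\geq\tfrac{1}{2}$, which is what allows the dimension-dependent coefficient on $H_\Sigma^2$ arising from Gauss to be replaced by the clean $\tfrac{1}{2}$ in the final statement, and the main practical obstacle is ensuring that the $h\langle\nabla_N u,\vec{n}\rangle$ cross terms cancel rather than accumulate once both the Gauss and Euler-Lagrange substitutions have been carried out.
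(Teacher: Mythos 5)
Your derivation is correct, and it reaches the stated inequality. The paper itself does not supply a proof here -- it cites Chodosh--Li, Lemmas 13 and 14, and closes the statement with a \texttt{\textbackslash qed}. Your argument is the standard one behind that citation: the Euler--Lagrange equation $H_\Sigma u + \langle\nabla_N u,\vec{n}\rangle = hu$ from the vanishing first variation, the second variation of the weighted area minus that of the bulk term $\int_\Omega hu$, substitution of the Laplacian splitting $\Delta_N u = \Delta_\Sigma u + \mathrm{Hess}_N u(\vec{n},\vec{n}) + H_\Sigma\langle\nabla_N u,\vec{n}\rangle$ (which, combined with Euler--Lagrange, wipes out all $H_\Sigma$-linear terms), and the traced Gauss equation $|A_\Sigma|^2 + \mathrm{Ric}_N(\vec{n},\vec{n}) = \tfrac{1}{2}(R_N - R_\Sigma + |\mathring A_\Sigma|^2) + \tfrac{n}{2(n-1)}H_\Sigma^2$.

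One point worth spelling out, which you identified as the ``only nontrivial ingredient'': after Gauss you hold a factor $\tfrac{n}{2(n-1)}$ in front of $uH_\Sigma^2$, whereas the target inequality implicitly carries $\tfrac{1}{2}$ (this is what makes the cross term $h\langle\nabla_N u,\vec{n}\rangle$ cancel exactly after expanding $H_\Sigma = h - u^{-1}\langle\nabla_N u,\vec{n}\rangle$). Since $\tfrac{n}{2(n-1)}\geq\tfrac{1}{2}$ for $n\geq 2$, you are throwing away the nonnegative quantity $\bigl(\tfrac{n}{2(n-1)}-\tfrac{1}{2}\bigr)uH_\Sigma^2\psi^2 = \tfrac{1}{2(n-1)}uH_\Sigma^2\psi^2 \geq 0$, which is the correct direction. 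If instead one kept the sharp coefficient, a residual $\tfrac{1}{n-1}h\langle\nabla_N u,\vec{n}\rangle\psi^2$ would survive, and the clean form in the lemma would not appear; so the weakening you perform is precisely what the statement requires, not an optional simplification.

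In short: your proof is correct, follows the same route as the reference the paper cites, and is an appropriate substitute for the proof the paper omits.
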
\qed

For later height estimates of cylindrical region, we need the diameter bound of $\mu$-bubble surfaces.  Similar diameter bound is given in \cite[Lemma $16$]{CL}.  Our method follows closely \cite{SY-CMP} (also see \cite{SY-PSC}). 

\begin{lemma}\label{lem the diameter upper bound of 2-dim surface-SY}
{For compact $(\Sigma^2, g)$, if there is a smooth function $u> 0$ such that 
\begin{align}
-\Delta u\geq (K- K_\Sigma)u, \nonumber 
\end{align}
for some constant $K> 0$, then $\mathrm{Diam}(\Sigma)\leq \frac{2\pi}{\sqrt{3K}}$. 
}
\end{lemma}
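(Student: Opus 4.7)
The plan is to adapt the Schoen--Yau stability trick from \cite{SY-CMP,SY-PSC}. The hypothesis $-\Delta u \geq (K - K_\Sigma)u$ with $u > 0$ first yields, by Barta's principle (multiply by $\phi^2/u$ for $\phi \in C^\infty(\Sigma)$, integrate over $\Sigma$, integrate by parts, and apply the pointwise bound $2\phi \nabla\phi \cdot \nabla u / u \leq |\nabla\phi|^2 + \phi^2 |\nabla u|^2 / u^2$), the spectral-type estimate
\[
\int_\Sigma \bigl(|\nabla \phi|^2 + K_\Sigma \phi^2\bigr) \, dA \;\geq\; K \int_\Sigma \phi^2 \, dA, \qquad \forall\, \phi \in C^\infty(\Sigma).
\]
Equivalently, the first eigenvalue of $-\Delta + K_\Sigma - K$ on the closed surface $\Sigma$ is non-negative.

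Assume for contradiction that $L := \mathrm{Diam}(\Sigma) > 2\pi/\sqrt{3K}$, and choose a unit-speed minimizing geodesic $\gamma\colon[0, L] \to \Sigma$ realizing the diameter; set $v(t) := u(\gamma(t)) > 0$. The classical second variation of arclength along $\gamma$ provides
\[
\int_0^L (\eta')^2 \, dt \;\geq\; \int_0^L K_\Sigma(\gamma(t)) \eta^2 \, dt, \qquad \forall\, \eta \in C_c^\infty((0, L)).
\]
The strategy is to insert a Schoen--Yau type test function $\phi(t, s) = v(t)^{1/2} \eta(t) \chi(s)$ in Fermi coordinates $(t, s)$ about $\gamma$ (with $\chi$ a cutoff in the normal direction) into the 2-D spectral inequality, simultaneously using the 1-D second variation above. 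After expanding $|\nabla\phi|^2$ and integrating by parts, the pointwise decomposition $\Delta u|_\gamma = v'' + \nabla^2 u(N, N)|_\gamma$ and the rearranged hypothesis $K_\Sigma v \geq Kv + v'' + \nabla^2 u(N, N)|_\gamma$ let one reduce, after optimizing the normal cutoff $\chi$ so that its contribution vanishes in the limit, to
\[
\int_0^L v \, (\eta')^2 \, dt \;\geq\; \tfrac{3K}{4} \int_0^L v \, \eta^2 \, dt.
\]
Testing with $\eta(t) = \sin(\pi t / L)$ then forces $(\pi/L)^2 \geq 3K/4$, that is $L \leq 2\pi/\sqrt{3K}$, contradicting the assumption.

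The main obstacle is the normal-Hessian term $\nabla^2 u(N, N)|_\gamma$ arising from the decomposition of $\Delta u$ along $\gamma$: this quantity has no \emph{a priori} sign or bound from the hypothesis alone, so a purely 1-dimensional second variation argument cannot close the estimate. The remedy is to couple the full 2-D spectral inequality (which involves the complete surface Laplacian) with the 1-D second variation through the Fermi-coordinate test function above, and to apply a Cauchy--Schwarz step at the optimal balance parameter on the cross terms. The effective curvature $\tfrac{3K}{4}$ --- weaker than the sharp classical Bonnet--Myers constant $K$, which would give $L \leq \pi/\sqrt{K}$ --- is the unavoidable loss from this 2-D-to-1-D reduction.
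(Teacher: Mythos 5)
Your self-diagnosis is accurate — the normal Hessian term $\nabla^2 u(N,N)$ along a curve is the obstruction — but your proposed remedy does not close the gap, and there is a genuine missing idea. You choose an \emph{ordinary} unit-speed minimizing geodesic $\gamma$ and try to couple the 1-D second variation with the 2-D spectral inequality by inserting a Fermi-coordinate test function $\phi(t,s)=v(t)^{1/2}\eta(t)\chi(s)$ supported in a tubular $\ep$-neighborhood of $\gamma$. This concentration cannot work: as the normal width $\ep\to 0$, the contribution $\int v\eta^2(\chi')^2$ to $\int_\Sigma|\nabla\phi|^2$ scales like $\ep^{-1}$ while every other term scales like $\ep$, so the spectral inequality $\int(|\nabla\phi|^2+(K_\Sigma-K)\phi^2)\geq 0$ becomes trivially satisfied in the limit and carries no information. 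There is no choice of $\chi$ for which ``its contribution vanishes,'' precisely because the normal derivative term is what dominates. More to the point, along an ordinary geodesic the hypothesis gives only an \emph{upper} bound $\nabla^2 u(N,N) \leq (K_\Sigma - K)u - v''$, which goes the wrong way when plugged into $\int(\eta')^2\geq\int K_\Sigma\eta^2$; nothing in your scheme produces the lower bound that would be needed.

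The idea you are missing is that one must minimize the \emph{$u$-weighted} length $I[\gamma]=\int_\gamma u\,ds$ rather than ordinary arclength, exactly as in Schoen--Yau's condensation argument (\cite{SY-CMP}). For a minimizer $\gamma_0$ the first variation gives $u\,\nabla_{\partial s}\partial s=(\nabla u)^\perp$, i.e.\ $u H=\nabla_v u$. This identity is what removes the normal Hessian algebraically: since $\Delta_\Sigma u = D^2u(v,v) + D^2u(\partial_s,\partial_s)$ and $D^2u(\partial_s,\partial_s) = u'' - H\,\nabla_v u = u'' - H^2 u$ along $\gamma_0$, one gets $D^2u(v,v) = \Delta_\Sigma u - u'' + H^2 u$. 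Substituting this into the second variation of $I$ and then using the PDE $\Delta u \leq (K_\Sigma - K)u$ eliminates both $D^2u(v,v)$ and $K_\Sigma$, and the remaining $-uH^2$ term has the right sign to be discarded, leaving a genuinely 1-D inequality
\begin{align}
\int_0^l u(f')^2 + 2f u' f' - K u f^2\,ds \;\geq\; 0,\qquad \forall f\in C_0^\infty[0,l],\nonumber
\end{align}
along $\gamma_0$. Your final step — testing with $f = u^{-1/2}\sin(\pi s/l)$ (equivalently, $\eta=\sin$ after factoring out $v^{1/2}$) and running Cauchy--Schwarz on the cross term with the optimal constant — is exactly the paper's Step (2) and gives $l\leq 2\pi/\sqrt{3K}$. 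So the back end of your argument is right; the front end needs to replace the Riemannian geodesic by the $u$-geodesic, after which the 2-D/Fermi machinery you built becomes unnecessary.
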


\begin{remark}\label{rem PDE Bonnet-Myers thm}
{The sharp upper bound of $\mathrm{Diam}(\Sigma)$ in the above lemma is unknown.  Comparing the well-known Bonnet-Myers Theorem,  we can view Lemma \ref{lem the diameter upper bound of 2-dim surface-SY} as a type of conformal Bonnet-Myers Theorem for surfaces.   
}
\end{remark}

\pf
{\textbf{Step (1)}. Assume $\mathrm{Diam}(\Sigma)= d_g(p, q)$, where $p, q\in \Sigma$. Define $I[\gamma]$ by 
\begin{align}
I[\gamma]= \int_\gamma u ds, \nonumber 
\end{align}
where $ds$ is arclength along $\gamma$ and $\gamma$ is any curves from $p$ to $q$. Let $\gamma_0$ be one minimizer of $I[\cdot]$. 

As in the proof of \cite[Proposition $1$]{SY-CMP} (also see \cite[Proposition $2.2$]{LZ}), let $V(s)= f(s)\cdot v(s)$ be the variation vector field along $\gamma(s)$, where $v(s)$ is the unit normal vector field along $\gamma$. The first variation formula yields
\begin{align}
 u(\gamma_0(s))\nabla_{\partial s}\partial s- (\nabla u)^\perp= 0, \quad \text{along}\ \ \gamma_0.\label{mean curv vanish}
\end{align}

The non-negativity of the  second variation of $I$ at $\gamma_0$ gives
\begin{align}
0\leq \delta^2I[\gamma_0]= \int_{\gamma_0} [D^2u(v, v)- 2u\cdot H^2]\cdot f^2+ u\cdot (|f'|^2- K_\Sigma\cdot f^2)ds, \label{2nd vari non-negat}
\end{align}
where $H$ is the mean curvature of $\gamma\subseteq \Sigma$.  

Note $\displaystyle \Delta_\Sigma u= D^2u(v, v)+ D^2u(\partial s, \partial s)$, combining (\ref{mean curv vanish}), we get
\begin{align}
\Delta_\Sigma u&= D^2u(v, v)+ D^2u(\partial s, \partial s)= D^2u(v, v)+ u''- \nabla_{\nabla_{\partial s}\partial s}u \nonumber \\
&= D^2u(v, v)+ u''- H^2\cdot u, \label{Hessian of u}
\end{align}
where  $u'\vcentcolon= \frac{d}{d s}u(\gamma_0(s))$.

Plugging (\ref{Hessian of u}) into (\ref{2nd vari non-negat}), using the equation of $u$ in the assumption, we obtain
\begin{align}
0&\leq \int_{\gamma_0} [\Delta_\Sigma u- u''(s)- u\cdot H^2]\cdot f^2+ u\cdot (|f'|^2- K_\Sigma\cdot f^2)ds\nonumber \\
&\leq \int_{\gamma_0} - u''(s)\cdot f^2+ u\cdot (|f'|^2- K f^2)ds. \nonumber 
\end{align}

Assume $l= \mathrm{length}(\gamma_0)$,  then integration by parts yields
\begin{align}
\int_0^l u(f')^2+ 2fu'f'- Kuf^2 ds\geq 0, \quad \quad \forall f\in C_0^\infty[0, l]. \label{integ ineq-main}
\end{align}

\textbf{Step (2)}. Put $f(s)= u^{-\frac{1}{2}}(\gamma_0(s))\sin(\frac{\pi}{l}s)$ into (\ref{integ ineq-main}), we get
\begin{align}
&\quad \int_0^l -\frac{3}{4}[(\ln u)']^2 \sin^2(\frac{\pi}{l}s)+ \frac{\pi}{l}(\ln u)'\sin(\frac{\pi}{l}s)\cdot \cos(\frac{\pi}{l}s) ds \nonumber \\
&\geq \int_0^l \left(K\sin^2(\frac{\pi}{l}s)- (\frac{\pi}{l})^2 \cos^2(\frac{\pi}{l}s)\right) ds. \label{integ ineq-intermediate}
\end{align}

By Cauchy-Schwarz inequality, we have
\begin{align}
-\frac{3}{4}[(\ln u)']^2 \sin^2(\frac{\pi}{l}s)+ \frac{\pi}{l}(\ln u)'\sin(\frac{\pi}{l}s)\cdot \cos(\frac{\pi}{l}s) \leq \frac{1}{3}\cdot (\frac{\pi}{l})^2\cos^2(\frac{\pi}{l}s). \label{CS-ineq}
\end{align}

Plugging (\ref{CS-ineq}) into (\ref{integ ineq-intermediate}), we obtain
\begin{align}
\frac{4}{3}\cdot (\frac{\pi}{l})^2\int_0^l\cos^2(\frac{\pi}{l}s) ds\geq K\int_0^l \sin^2(\frac{\pi}{l}s) ds. \nonumber 
\end{align}
This implies $l\leq \frac{2\pi}{\sqrt{3K}}$. 
}
\qed

\section{The separating $\mu$-bubble surfaces}\label{sec separating bubble surfaces}

\begin{definition}\label{def seperating property}
{If a smooth compact hypersurface $\Sigma_k\subseteq E_k- \overline{E_{k+ 1}}$ satisfies the following property: ``for any continuous curve $\gamma: [0, 1]\rightarrow \overline{E_k}- E_{k+ 1}$ with $\gamma(0)\in \partial E_k, \gamma(1)\in \partial E_{k+ 1}$, we have $\gamma\cap \Sigma_k\neq \emptyset$", then we say that \textbf{$\Sigma_k$ separates $\partial E_k$ from $\partial E_{k+ 1}$}. 
}
\end{definition}

In the applications of $\mu$-bubble surface, we require that it satisfies the above separating property. The following proposition is important to obtain such separating property.

\begin{prop}\label{prop connected property of annulus}
{If $(M^n,  g)$ is a complete Riemannian manifold with only one end and is simply connected,  then there is $k_0\in \mathbb{Z}^+$, such that $\overline{E_k}- E_{k+ 1}$ and $\partial E_k$ are both connected for $k\geq k_0$. 
}
\end{prop}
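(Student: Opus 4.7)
My plan is to run a Mayer--Vietoris argument on two convenient decompositions of $M$, using simple-connectedness through $H_1(M)=0$ to force the connectedness of the intersection.

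First I would show that the ``inside" set $M\setminus E_k$ is connected for all $k\geq k_0$. Since $M$ has only one end, for all sufficiently large $k$ the set $M\setminus\overline{B_p(kL)}$ has a unique unbounded connected component $E_k$, while every other component $C$ is bounded. Each such $C$ is open and connected, and its topological boundary $\partial C$ in $M$ is contained in $\partial B_p(kL)$. Moreover $\partial C\neq\emptyset$, otherwise $C$ would be clopen in the connected manifold $M$, which is impossible since $C$ is bounded and $M$ is noncompact. Thus every bounded component is attached to $\overline{B_p(kL)}$ along $\partial B_p(kL)$, so
\begin{equation*}
M\setminus E_k\;=\;\overline{B_p(kL)}\;\cup\;\bigcup_{C\,\text{bounded}}C
\end{equation*}
is connected.

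Next, to prove that $\partial E_k$ is connected, I would apply Mayer--Vietoris to $M=\overline{E_k}\cup(M\setminus E_k)$, whose intersection is exactly $\partial E_k$. The relevant piece of the long exact sequence is
\begin{equation*}
H_1(M)\;\longrightarrow\;H_0(\partial E_k)\;\longrightarrow\;H_0(\overline{E_k})\oplus H_0(M\setminus E_k)\;\longrightarrow\;H_0(M)\;\longrightarrow\;0.
\end{equation*}
Simple-connectedness gives $H_1(M)=0$, while the three groups on the right are $\mathbb{Z}$ by the previous step and the fact that $E_k$ is connected. This reduces the sequence to $0\to H_0(\partial E_k)\to\mathbb{Z}^{2}\to\mathbb{Z}\to 0$, forcing $H_0(\partial E_k)=\mathbb{Z}$. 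Exactly the same argument applied to the decomposition $M=\overline{E_k}\cup(M\setminus E_{k+1})$ -- which is a valid cover because $E_{k+1}\subset E_k$ and whose intersection is $\overline{E_k}\cap(M\setminus E_{k+1})=\overline{E_k}-E_{k+1}$ -- combined with the already-established connectedness of $M\setminus E_{k+1}$, yields $H_0(\overline{E_k}-E_{k+1})=\mathbb{Z}$, which is what we want.

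The main obstacle is purely technical: Mayer--Vietoris for closed subsets requires the pair $(\overline{E_k},M\setminus E_k)$ to be excisive, which amounts to $\partial E_k$ admitting a bi-collar in $M$. This is standard once $\partial B_p(kL)$ is a topological manifold near $\partial E_k$, which holds after a small shift of $L$ so that the radii $kL$ are regular values of $d_p$ for $k\geq k_0$ (via Sard's theorem applied to a smooth approximation of the distance function), after which $\partial E_k$ is a union of connected components of a smooth hypersurface and the standard Mayer--Vietoris machinery applies verbatim.
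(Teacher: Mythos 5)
Your overall strategy is sound and is, I believe, in the same spirit as the argument the paper defers to (the paper simply cites \cite[Proposition 3.2]{CLS-0} and gives no proof of its own): reduce the connectedness claims to $H_1(M)=0$ via Mayer--Vietoris, after first establishing that $M\setminus E_k$ is connected. Your first step is correct, and the algebra in the Mayer--Vietoris sequences is also fine (note that $H_0=\mathbb{Z}$ gives path-connectedness, which is stronger than and implies connectedness, so that direction causes no trouble).

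There is, however, a genuine gap in the way you handle the excisiveness of the closed cover, and you yourself correctly flag this as \emph{the} technical obstacle. Your proposed fix --- shift $L$ so that the radii $kL$ are regular values of $d_p$ by Sard's theorem applied to a smooth approximation of the distance function --- does not work. The distance function $d_p$ is not $C^1$ on the cut locus of $p$, so Sard's theorem does not apply to $d_p$ itself, and the geodesic sphere $\partial B_p(kL)$ can fail to be a topological hypersurface precisely where it meets $\mathrm{Cut}(p)$. Applying Sard to a smooth approximation $\tilde d$ of $d_p$ controls the level sets of $\tilde d$, not of $d_p$; but the sets $E_k$ in the proposition are defined via the genuine geodesic balls $B_p(kL)$, so a regular level set of $\tilde d$ tells you nothing about $\partial E_k$, and you do not explain how to transfer the conclusion from the $\tilde d$-version of $E_k$ back to the $d_p$-version. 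In short, after your Sard step the set $\partial E_k$ is still not known to be a union of components of a smooth hypersurface, so ``the standard Mayer--Vietoris machinery applies verbatim'' is not justified.

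The gap is fixable without any regularity of $\partial E_k$, by replacing the closed cover with open thickenings. Set $A_\varepsilon=\{x:d(x,\overline{E_k})<\varepsilon\}$ and $B_\varepsilon=\{x:d(x,M\setminus E_k)<\varepsilon\}$; these are open, connected (being $\varepsilon$-neighborhoods of connected sets) and cover $M$, so the open Mayer--Vietoris sequence together with $H_1(M)=0$ shows $A_\varepsilon\cap B_\varepsilon$ is connected for every $\varepsilon>0$. The closures $\overline{A_\varepsilon\cap B_\varepsilon}$ are compact (since $M\setminus E_k$ is compact), connected, and nested decreasing with $\bigcap_{\varepsilon>0}\overline{A_\varepsilon\cap B_\varepsilon}=\partial E_k$; a nested intersection of compact connected sets is connected, so $\partial E_k$ is connected. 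The same thickening argument applied to $\overline{E_k}$ and $M\setminus E_{k+1}$ gives the connectedness of $\overline{E_k}-E_{k+1}$. I would recommend replacing your Sard step with this argument.
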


\pf
{See \cite[Proposition $3.2$]{CLS-0}. 
}
\qed

Now we present the main result of this section, which provides us the separating $\mu$-bubble surfaces with upper bound of area and diameter in cylindrical region $E_k- \overline{E_{k+ 1}}$ when $k$ is big enough. One crucial thing is that the upper bound of the area is asymptotically sharp with respect to the large scale $L\rightarrow \infty$.
\begin{prop}\label{prop volume of separating bubble surfaces}
{Assume $(M^3, g)$ is a complete non-compact Riemannian manifold with $R\geq 2$ and only one end, also it is simply connected and $p\in M^3$.  For any $\varepsilon>0$ and $L> \sqrt{2}\pi(1+\varepsilon)$, there is $k_0= k_0(\varepsilon, L)\in \mathbb{Z}^+$; such that for any $k\geq k_0$, one can find a smooth compact surface $\Sigma_k\subseteq E_k- \overline{E_{k+ 1}}$, which satisfies
\begin{enumerate}
\item[(1)]. $\Sigma_k$ separates $\partial E_k$ from $\partial E_{k+ 1}$;
\item[(2)]. $\displaystyle V(\Sigma_k)\leq \frac{4\pi}{1- \frac{2\pi^2(1+\varepsilon)^2}{L^2}}$. 
\item[(3)]. $\mathrm{Diam}(\Sigma_k)\leq \frac{2\pi}{\sqrt{3(1- \frac{2\pi^2(1+ \varepsilon)^2}{L^2})}}$.
\end{enumerate}
}
\end{prop}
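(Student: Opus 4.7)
My plan is to apply the $\mu$-bubble existence theorem (Proposition \ref{prop existence of bubbles}) inside the annular region between $\partial E_k$ and $\partial E_{k+1}$ for $k$ large, with a carefully chosen prescribed-mean-curvature function $h$, and then extract the three claimed estimates from the second variation formula of Lemma \ref{lem 1st and 2nd vari of bubble}, Gauss--Bonnet, and Lemma \ref{lem the diameter upper bound of 2-dim surface-SY}. Set $b:=\pi(1+\varepsilon)/L$; the hypothesis $L>\sqrt{2}\pi(1+\varepsilon)$ gives $\pi/b=L/(1+\varepsilon)<L$. Using Cheeger--Colding's almost splitting theorem applied along a ray based at $p$, together with Proposition \ref{prop connected property of annulus} and the appendix lemmas (Lemmas \ref{lem function for Caccioppoli set} and \ref{lem existence of smooth function}), I build for $k$ large a smoothly bounded region $N\subseteq \overline{E_k}-E_{k+1}$ with $\partial N=\partial_- N\sqcup\partial_+N$ (approximating $\partial E_{k+1}$ and $\partial E_k$ respectively), a smooth function $\rho$ on $N$ satisfying $|\nabla\rho|\leq 1$, $\rho\equiv L/2+\pi/(2b)$ on $\partial_- N$, $\rho\equiv L/2-\pi/(2b)$ on $\partial_+N$, and a Caccioppoli set $\mathcal C$ satisfying (\ref{assumption on Cacci set C}). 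Then take $u\equiv 1$ and
\[
h(x):=-2b\tan\bigl(b(\rho(x)-L/2)\bigr),
\]
so $h\to\pm\infty$ on $\partial_{\pm}N$ and the ODE identity $h^2+2h'(\rho)\equiv -4b^2$ holds pointwise.

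\textbf{Area estimate.} Proposition \ref{prop existence of bubbles} yields a $\mu$-bubble $\Omega$, and $\partial\Omega-\partial N$ is a smooth compact surface separating $\partial_+N$ from $\partial_-N$, hence separating $\partial E_k$ from $\partial E_{k+1}$. Let $\Sigma_k$ be a connected component that still separates. Inserting $u\equiv 1,\psi\equiv 1$ into Lemma \ref{lem 1st and 2nd vari of bubble} and using $R_N\geq 2$, $|\mathring A_{\Sigma_k}|^2\geq 0$, together with
\[
h^2+2\langle\nabla h,\vec n\rangle \;\geq\; h^2+2h'(\rho)|\nabla\rho| \;\geq\; h^2+2h'(\rho) \;=\; -\frac{4\pi^2(1+\varepsilon)^2}{L^2}
\]
(valid because $h'(\rho)<0$ and $|\nabla\rho|\leq 1$), one obtains
\[
\int_{\Sigma_k} R_{\Sigma_k}\;\geq\; \Bigl(2-\tfrac{4\pi^2(1+\varepsilon)^2}{L^2}\Bigr)V(\Sigma_k)\;>\;0.
\]
Hence $\chi(\Sigma_k)>0$, so $\Sigma_k$ is a $2$-sphere and Gauss--Bonnet gives $\int_{\Sigma_k}R_{\Sigma_k}=8\pi$, which produces the desired bound (2).

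\textbf{Diameter estimate.} Keeping $u\equiv 1$ but letting $\psi$ vary in Lemma \ref{lem 1st and 2nd vari of bubble}, and using $R_{\Sigma_k}=2K_{\Sigma_k}$ for a surface, the same pointwise estimates yield the stability inequality
\[
\int_{\Sigma_k}|\nabla_{\Sigma_k}\psi|^2+K_{\Sigma_k}\psi^2 \;\geq\; K\int_{\Sigma_k}\psi^2, \qquad K:=1-\frac{2\pi^2(1+\varepsilon)^2}{L^2}>0.
\]
Thus the first eigenvalue of $-\Delta_{\Sigma_k}+K_{\Sigma_k}$ is at least $K$, and the associated positive eigenfunction $\phi$ satisfies $-\Delta_{\Sigma_k}\phi\geq(K-K_{\Sigma_k})\phi$. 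Applying Lemma \ref{lem the diameter upper bound of 2-dim surface-SY} to $\phi$ delivers $\mathrm{Diam}(\Sigma_k)\leq 2\pi/\sqrt{3K}$, which is exactly (3).

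\textbf{Main obstacle.} The technical heart of the argument is engineering $N$ and $\rho$ so that $|\nabla\rho|\leq 1$ (needed for the sharp constant $-4b^2$ in the $h$-estimate) while the $\rho$-range $[L/2-\pi/(2b),L/2+\pi/(2b)]$ of width $L/(1+\varepsilon)$ can be realized inside a geodesic-annular region of width $\approx L$. This is precisely where Cheeger--Colding's almost splitting theorem enters: at large $k$, a neighborhood of the ray is $\delta$-Gromov--Hausdorff close to a metric product, and its line-factor coordinate supplies the desired almost-unit-gradient function, which is then smoothed and cut off via Appendix \ref{sec appe exist}. The small excess in $|\nabla\rho|$ from smoothing can be absorbed by first choosing $b=\pi(1+\varepsilon/2)/L$ and then taking $k_0$ large enough that the resulting multiplicative error still lies below the $(1+\varepsilon)$ budget. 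Simple-connectedness, via Proposition \ref{prop connected property of annulus}, is what allows one to select a single connected separating bubble component; everything else is straightforward bookkeeping with the second variation formula and Gauss--Bonnet.
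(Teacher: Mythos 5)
Your proposal is essentially the paper's own argument: a $\mu$-bubble in $N_k$ with $u\equiv 1$ and a tangent-type prescribed function $h$, then $\psi\equiv 1$ in the stability inequality plus Gauss--Bonnet for the area bound, then the first eigenfunction of the stability operator fed into Lemma \ref{lem the diameter upper bound of 2-dim surface-SY} for the diameter bound. Your reparametrization (unit-gradient $\rho$ with compressed range, versus the paper's fixed range $[-L/2,L/2]$ with gradient bound $(1+\varepsilon)^2$) is equivalent bookkeeping and yields the identical pointwise inequality $h^2+2\langle\nabla h,\vec n\rangle\geq -4\pi^2(1+\varepsilon)^2/L^2$. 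Two small corrections are worth making: the construction of $\rho_k$ (Lemma \ref{lem existence of smooth function}) does not invoke Cheeger--Colding at all — it is a straightforward mollification of the Lipschitz-$1$ distance function, and almost splitting only enters later, in the height estimate of Section \ref{sec radius est for annulus}; and the sentence ``let $\Sigma_k$ be a connected component that still separates'' hides a genuine step, which the paper carries out via a mod-$2$ intersection-number argument that uses simple connectivity to homotope a would-be nonseparating loop to a point (Steps 2--3 of the paper's proof). You correctly identify simple connectivity as the needed hypothesis, but the selection of a single separating component is not automatic and would need that argument spelled out.
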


\begin{remark}\label{rem separate}
{The separating property of suitable $\mu$-bubble surface $\Sigma_k$ (the property (1) above) was firstly observed in the argument of \cite[Lemma $5.4$]{CLS-0}, which is used for our later argument of Lemma \ref{lem surface area comparison}. 
}
\end{remark}

\pf
{\textbf{Step (1)}. It is obvious that $E_{k+ 1}\subseteq E_k$. Recall $N_k= E_k- \overline{E_{k+ 1}}$, and $\partial_+N_k= \partial E_k, \partial_-N_k= \partial E_{k+ 1}$. Then $\partial N_k= \partial_- N_k\sqcup \partial_+ N_k$. From Lemma \ref{lem function for Caccioppoli set}, we can choose a smooth function $f\in C^\infty(M^3)$ with 
\begin{align}
f\big|_{\partial E_k}\leq kL, \quad f\big|_{\partial E_{k+ 1}}> (k+ \frac{1}{2})L, \quad  \text{and} \ (k+ \frac{1}{2})L\ \text{is a regular value of} \ f. \nonumber 
\end{align}
Define $\mathcal{C}_k= \{x\in M^3: f(x)< (k+ \frac{1}{2})L\}\cap N_k$. Then $\mathcal{C}_k$ is a Caccioppoli set satisfying (\ref{assumption on Cacci set C}). 


From Lemma \ref{lem existence of smooth function} there is a function $\rho_k(x):\overline{N_k}\to [-L/2,L/2]$, which is continuous on $\overline{N_k}$ and smooth on $N_k$. Moreover $\rho_k(N_k)\subset(-L/2,L/2)$, and
\begin{align*}
    \rho_k|_{\partial E_k} = -L/2, \quad \rho_k|_{\partial E_{k+1}} = L/2, \quad |\nabla_M \rho_k|<(1+\varepsilon)^2.
\end{align*}

Now choose $h_k(x)= -\frac{2\pi(1+\varepsilon)}{L}\tan(\frac{\pi}{L}\rho_k(x))$ on $N_k$, from Proposition \ref{prop existence of bubbles}, there exists one $\mu$-bubble $\Omega_k$ with respect to $(h_k, 1, \mathcal{C}_k)$ in $(N_k, \partial_{-}N_k, \partial_{+}N_k)$. 

The choice of $h_k$ implies
\begin{align}
\frac{2\pi^2(1+\varepsilon)^2}{L^2}+ \frac{1}{2}h_k^2- |\nabla_M h_k|\geq 0. \label{ineq of h-1}
\end{align}

\textbf{Step (2)}. From Proposition \ref{prop connected property of annulus} and the assumption, there is $k_0\in \mathbb{Z}^+$, such that the sets $\overline{E_k}- E_{k+ 1}, \partial E_k$ are connected for any $k\geq k_0$. In the rest argument, we assume $k\geq k_0$.

Then there is a simple curve $\gamma:[0,1]\to\overline{E_k}- E_{k+ 1}$, with $\gamma(0)=q_1\in\partial E_k$, $\gamma(1)=q_2\in\partial E_{k+1}$ and $\gamma((0,1))\cap (\partial E_k\cup\partial E_{k+1}) =\varnothing$. Furthermore, form the density of transverse intersections, we may assume $\gamma$ intersects $\Omega_k$ and $\partial \Omega_k$ transversely. 

Let $I_2(\gamma, \partial \Omega_k)$ be the mod 2 intersection number of $\gamma$ and $\partial \Omega_k$. From the transversality, $\gamma\cap \Omega_k$ is an 1-dimensional manifold, which turns out to be a union of closed intervals. So $\displaystyle \#\partial \Big(\gamma\cap \Omega_k\Big) = 0 \mod 2$.  And then
\begin{align*}
    I_2(\gamma, \partial \Omega_k)= \#\gamma\cap \big(\partial \Omega_k\big) = \#\partial \Big(\gamma\cap\Omega_k\Big) = 0 \mod 2.
\end{align*}

From the definition of $\Omega_k$, we know $\partial E_k\subset \Omega_k$. Let $\displaystyle \partial\Omega_k-\partial E_k=\bigcup_i \Sigma_k^i$ be the disjoint union of connected components.  Since $\gamma\cap(\partial E_k)=\{q_1\}$, we get
\begin{align*}
     \sum_i I_2(\gamma,\Sigma_k^i) = \#\gamma\cap \big(\partial \Omega_k\big)-\#\gamma\cap (\partial E_k) = 1\mod 2.
\end{align*}

So there is $i_0$ such that
\begin{align*}
    I_2(\gamma,\Sigma_k^{i_0})=1.
\end{align*}
From the definition of $\Omega_k$, we know $\Omega_k\cap\partial E_{k+1}=\varnothing$. Thus $\Sigma_k^{i_0}\cap \partial E_{k+1}=\varnothing$. And we also have $\Sigma_k^{i_0}\cap \partial E_k=\varnothing$ from the definition.

\textbf{Step (3)}. We use $\Sigma_k$ to denote $\Sigma_k^{i_0}$, then $\Sigma_k$ satisfies the conclusion (1). If not, then there is a simple curve $\check{\gamma}:[0,1]\to\overline{E_k}- E_{k+ 1}$, such that $\check{\gamma}(0)=q_3\in\partial E_{k+1}$, $\check{\gamma}(1)=q_4\in\partial E_k$ and $\check{\gamma}((0,1))\cap (\partial E_k\cup\partial E_{k+1}) =\varnothing$. Moreover
\begin{align*}
    \check{\gamma}([0,1])\cap \Sigma_k=\varnothing.
\end{align*}
Since $\partial E_k$ and $\partial E_{k+1}$ are connected, there are two simple curves $\gamma_1:[0,1]\to\partial E_k$ and $\gamma_2:[0,1]\to\partial E_{k+1}$, such that $\gamma_1(0)=q_4$, $\gamma_1(1)=q_1$, $\gamma_2(0)=q_2$, $\gamma_2(1)=q_3$. Let $\tilde\gamma=\gamma*\gamma_2*\check{\gamma}*\gamma_1$ (see Figure \ref{figure: circle curve}), then
\begin{align}
   I_2(\tilde\gamma,\Sigma_k) = I_2(\gamma,\Sigma_k) + I_2(\gamma_2,\Sigma_k) + I_2(\check{\gamma},\Sigma_k) + I_2(\gamma_1,\Sigma_k)=1\mod 2.
\end{align}
On the other hand, from the assumption that $M$ is simply connected, $\tilde\gamma$ is homotopic equivalent to a constant curve at $q_1$. The mod 2 intersection number is invariant under the homotopic equivalence. So
\begin{align}
   I_2(\tilde\gamma,\Sigma_k) = I_2(q_1,\Sigma_k)=0 \mod 2,
\end{align}
which is a contradiction.

\begin{figure}[H]
    \centering
    \includegraphics[width=0.9\linewidth]{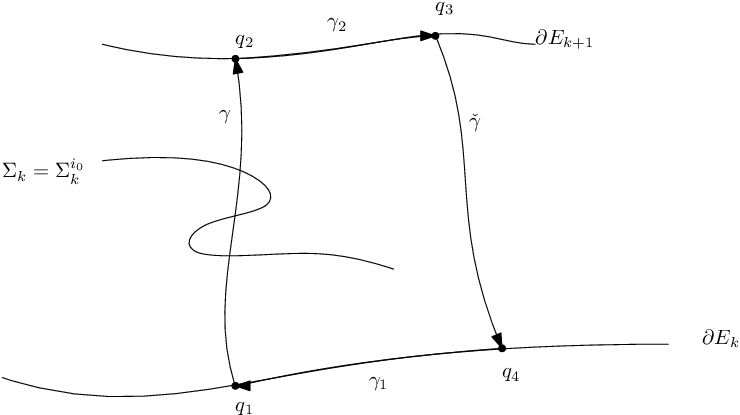}
    \caption{the closed curve $\tilde{\gamma}$}
    \label{figure: circle curve}
\end{figure}

\textbf{Step (4)}.  Now from Lemma \ref{lem 1st and 2nd vari of bubble}, for any $\psi\in C^\infty(\Sigma_k)$, we have 
\begin{align}
&\int_{\Sigma_k} |\nabla_{\Sigma_k} \psi|^2 - \frac{1}{2}(R_M- R_{\Sigma_k}+ |\mathring{A}_{\Sigma_k}|^2)\cdot \psi^2- \frac{1}{2}\{h_k^2+ 2\langle \nabla_M h_k, \vec{n}\rangle\}\psi^2  \geq 0. \nonumber 
\end{align}

Combining the assumption $R_M\geq 2$, we obtain
\begin{align}
&\int_{\Sigma_k} |\nabla_{\Sigma_k} \psi|^2 + (\frac{1}{2}R_{\Sigma_k}-1- \frac{1}{2}|\mathring{A}_{\Sigma_k}|^2)\cdot \psi^2- \frac{1}{2}\{h_k^2+ 2\langle \nabla_M h_k, \vec{n}\rangle\}\psi^2  \geq 0. \label{2nd vari ineq-1}
\end{align}

By (\ref{2nd vari ineq-1}) and (\ref{ineq of h-1}), there is
\begin{align}
&\int_{\Sigma_k} |\nabla_{\Sigma_k} \psi|^2 + (\frac{1}{2}R_{\Sigma_k}-1+ \frac{2\pi^2(1+\varepsilon)^2}{L^2})\cdot \psi^2 \label{inte ineq we can using F-S} \\ 
&\geq \int_{\Sigma_k} \{\frac{2\pi^2(1+\varepsilon)^2}{L^2}+ \frac{1}{2}h_k^2+ \langle \nabla_M h_k, \vec{n}\rangle\}\psi^2  \geq 0. \nonumber 
\end{align}

Let $\psi= 1$ in the above inequality, we get
\begin{align}
\int_{\Sigma_k} (1- \frac{2\pi^2(1+\varepsilon)^2}{L^2})\leq \frac{1}{2}\int_{\Sigma_k} R_{\Sigma_k}. \nonumber 
\end{align}

Note $\Sigma_k$ is a connected surface without boundary, by Gauss-Bonnet Theorem,
\begin{align}
\frac{1}{2}\int_{\Sigma_k} R_{\Sigma_k} = 2\pi\cdot \chi(\Sigma_k)\leq 4\pi.  \nonumber 
\end{align}

This implies $\displaystyle V(\Sigma_k)\leq \frac{4\pi}{1- \frac{2\pi^2(1+\varepsilon)^2}{L^2}}$. 

\textbf{Step (5)}.  Define $\displaystyle \mathcal{L}_k(f)\vcentcolon= -\Delta_{\Sigma_k} f+ (\frac{1}{2}R_{\Sigma_k}-1+ \frac{2\pi^2(1+\varepsilon)^2}{L^2})\cdot f$,  which is a self-dual operator.  From (\ref{inte ineq we can using F-S}),  we know that $\mathcal{L}_k$ is a non-negative operator.

Hence from the spectrum theory of self-dual operators,  there is a smooth function $\psi_k> 0$ defined on $\Sigma_k$ such that 
\begin{align}
-\Delta_{\Sigma_k} \psi_k+ (\frac{1}{2}R_{\Sigma_k}-1+ \frac{2\pi^2(1+\varepsilon)^2}{L^2})\cdot \psi_k \geq 0 . \nonumber 
\end{align}

From Lemma \ref{lem the diameter upper bound of 2-dim surface-SY}, we get $\displaystyle \mathrm{Diam}(\Sigma_k)\leq \frac{2\pi}{\sqrt{3(1- \frac{2\pi^2(1+ \varepsilon)^2}{L^2})}}$. 
}
\qed

\section{The height estimate of almost cylindrical region}\label{sec radius est for annulus}

The following lemma is closely related to \cite[Proposition $A. 4$]{CLS}; and our assumption is a little bit different from there,  we include its proof here for completeness.

We firstly recall the definition of length space (in some literature,  the length space defined below is also called complete length space).
\begin{definition}
    Let $(Y, d)$ be a metric space. For a path $\gamma:[a,b]\to Y$, let
    \begin{align*}
        \mathrm{length}(\gamma)=\sup\Big\{\sum_{i=0}^{n-1} d(\gamma(t_i),\gamma(t_{i+1})):a= t_0<t_1<\cdots<t_{n}=b\Big\}.
    \end{align*}
    For $x,y\in Y$, define $\displaystyle d'(x,y)=\inf\{\mathrm{length}(\gamma):\gamma\ \text{ is a path from}\ x \ \text{to}\ y\}$.  
    
If for any $x,y\in Y$, there is a path $\gamma_0$ from $x$ to $y$, such that $\mathrm{length}(\gamma_0)=d'(x,y)=d(x,y)<\infty$, then we call $(Y, d)$ a length space.
\end{definition}

\begin{lemma}\label{lem connected of some set}
{Suppose $(Y,  d)$ is a length space.  If $\displaystyle 0< r< \min\{\frac{1}{2}\mathrm{Diam}(Y, d),  L\}$,  then $B_{(y_0, 0)}(L)- B_{(y_0, 0)}(r)$ is path connected,  where $B_{(y_0,  0)}(r)$ is the open ball in $(Y\times \mathbb{R};  d\times d_{\mathbb{R}})$ centered at $(y_0, 0)\in Y\times \mathbb{R}$.  
}
\end{lemma}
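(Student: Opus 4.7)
The plan is to find a ``reference sphere'' $S_\rho \subset A := B_{(y_0, 0)}(L) - B_{(y_0, 0)}(r)$ that is itself path-connected in $Y \times \mathbb{R}$, and then show every point of $A$ can be joined to $S_\rho$ by a path in $A$; these two facts together force $A$ to be path-connected.

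The hypothesis $\mathrm{Diam}(Y, d) > 2r$ yields $\sup_{y \in Y} d(y, y_0) \geq \mathrm{Diam}(Y, d)/2 > r$, so one can choose $\rho \in (r, \min\{L, \sup_{y} d(y, y_0)\})$ and set
\[ S_\rho := \{(y, t) \in Y \times \mathbb{R} : d(y, y_0)^2 + t^2 = \rho^2\} \subseteq A. \]
To see $S_\rho$ is path-connected, observe that the metric disk $\overline{B_\rho^Y} := \{y \in Y : d(y, y_0) \leq \rho\}$ is path-connected (any $y$ joins $y_0$ along a length-minimizing path, which automatically lies in $\overline{B_\rho^Y}$ by the triangle inequality), and $S_\rho$ is the union of the two continuous images $S_\rho^{\pm} := \{(y, \pm\sqrt{\rho^2 - d(y, y_0)^2}) : y \in \overline{B_\rho^Y}\}$, which meet along the equator $\{(y, 0) : d(y, y_0) = \rho\}$; this equator is non-empty by the length-space property, since $\rho < \sup_y d(y, y_0)$ allows one to extract the arc-length-$\rho$ point on a length-minimizer from $y_0$ to any $y'$ with $d(y', y_0) > \rho$.

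Now let $(y_1, t_1) \in A$ and set $\rho_1 := \sqrt{d(y_1, y_0)^2 + t_1^2} \in (r, L)$. If $\rho_1 \geq \rho$, I would take the length-minimizing geodesic $\sigma$ in $(Y \times \mathbb{R},\, d \times d_{\mathbb{R}})$ from $(y_1, t_1)$ to $(y_0, 0)$, obtained by pairing a length-minimizer in $Y$ from $y_1$ to $y_0$ (parameterized so $d(\gamma(s), y_0) = (1-s) d(y_1, y_0)$) with the $\mathbb{R}$-segment $t(s) = (1-s) t_1$. A direct computation gives $d_{Y \times \mathbb{R}}(\sigma(s), (y_0, 0)) = (1-s)\rho_1$, decreasing linearly; truncating at $s_\ast = 1 - \rho/\rho_1$ lands on $S_\rho$ with the distance lying in $[\rho, \rho_1] \subset (r, L)$ throughout. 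If $\rho_1 < \rho$, fix $y_1$ and slide only the $\mathbb{R}$-coordinate from $t_1$ to a value $t^\ast$ with $(t^\ast)^2 = \rho^2 - d(y_1, y_0)^2$ chosen with the same sign as $t_1$ (either sign if $t_1 = 0$); note $\rho > \rho_1 \geq d(y_1, y_0)$ so $t^\ast$ is well-defined, and the distance from $(y_0, 0)$ is monotone from $\rho_1$ to $\rho$, again staying in $(r, L)$.

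The main technical point is establishing the path-connectedness of $S_\rho$ itself: a priori the upper and lower hemispheres could fail to meet, and it is precisely the hypothesis $r < \mathrm{Diam}(Y, d)/2$ which guarantees that the equator of $S_\rho$ can be placed on a non-trivial distance sphere of $(Y, d)$ centered at $y_0$, gluing the hemispheres together. The rest is routine once one notes that the explicit minimizing geodesic in $Y \times \mathbb{R}$ gives simultaneous reduction in both factors, which handles the case when $(y_1, t_1)$ lies ``further out'' than $S_\rho$.
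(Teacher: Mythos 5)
Your argument is correct, but it organizes the proof quite differently from the paper's. You fix a ``reference sphere'' $S_\rho$ at radius $\rho\in(r,\min\{L,\sup_{y}d(y,y_0)\})$, show $S_\rho$ is path-connected by gluing the two hemispheres along the equator $\{(y,0):d(y,y_0)=\rho\}$ (nonempty precisely because $r<\tfrac{1}{2}\mathrm{Diam}(Y,d)\leq\sup_y d(y,y_0)$), and then connect each point of $A:=B_{(y_0,0)}(L)-B_{(y_0,0)}(r)$ to $S_\rho$ via an explicit path along which the distance to $(y_0,0)$ varies monotonically: a product geodesic toward $(y_0,0)$ when the point lies outside $S_\rho$, a vertical slide when inside. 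The paper instead splits $A$ into $X_\pm$ (sign of the $\mathbb{R}$-coordinate), takes an \emph{arbitrary} connecting path in $B_{(y_0,0)}(L)$, and pushes its dip into $B_{(y_0,0)}(r)$ outward by replacing the $\mathbb{R}$-coordinate $t(s)$ with $T(s)=\sqrt{r^2-d(y(s),y_0)^2}$ wherever the path enters the inner ball, then glues $X_+$ to $X_-$ through one equatorial point $(y',0)$ with $r<d(y',y_0)<L$. Both proofs deploy the diameter hypothesis identically (to produce a point at height zero inside $A$), but the paper's projection trick is a bit lighter: it needs no explicit geodesic in $Y\times\mathbb{R}$, only that open balls in a length space are path-connected, while your argument leans on the affine parameterization of a minimizer from $(y_1,t_1)$ to $(y_0,0)$ and the product-geodesic structure. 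One small slip: for $(y_1,t_1)\in A$ you have $\rho_1\in[r,L)$, not $(r,L)$; the case $\rho_1=r<\rho$ is still handled by your vertical slide, since the sphere of radius $r$ lies in $A$.
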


\begin{remark}\label{rem counterexample}
    If $(Y,d)$ is merely a path connected metric space , the conclusion in Lemma \ref{lem connected of some set} does not hold.\footnote{The counterexample here is due to Jianqiao Shang in Université Paris-Saclay.}

    Let $Y=\mathbb{R}^2-(0,1)\times(-1,1)$ with the standard Euclidean metric, $y_0=(0,0)$ and $L=1.01$, then $B_{y_0}(L)$ is disconnected. Hence $B_{(y_0, 0)}(L)- B_{(y_0, 0)}(r)$ is disconnected (see Figure \ref{figure: counterexample for connectedness}).
\end{remark}

\begin{figure}[H]
    \centering
    \includegraphics[scale=0.8]{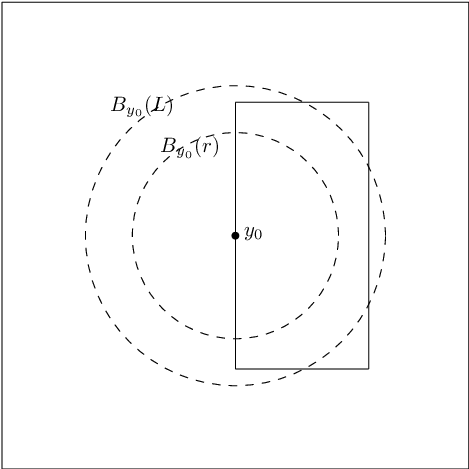}
    \caption{Counterexample when $(Y,d)$ is a path connected metric space}
    \label{figure: counterexample for connectedness}
\end{figure}

\begin{proof}
We first show
\begin{align*}
    X_+ \vcentcolon=(B_{(y_0, 0)}(L)- B_{(y_0, 0)}(r))\cap (Y\times \overline{\mathbb{R}^+})
\end{align*}
is path connected. For any $(y_1,t_1), (y_2,t_2)\in X_+$,  let $\eta(s)=(y(s),t(s))$ be a continuous curve from $(y_1,t_1)$ to $(y_2,t_2)$ in $B_{(y_0, 0)}(L)$,  where we use the fact that $Y$ is a length space.  
Replacing $t(s)$ by $\max\{t(s),0\}$, we may assume $t(s)\geq 0$.

Let 
\begin{align*}
    T(s)=\left\{\begin{array}{ll}
    \sqrt{r^2-d(y(s),y_0)^2}, & (y(s),t(s))\in B_{(y_0, 0)}(r)\\
    t(s), & \text{otherwise}
\end{array}\right..
\end{align*}
Then $T(s)\geq 0$ and $r^2\leq d(y(s),y_0)^2+T(s)^2<L^2$. So $(y(s),T(s))$ is a continuous curve in $X_+$ from $(y_1,t_1)$ to $(y_2,t_2)$. The same argument shows 
\begin{align*}
    X_-=(B_{(y_0, 0)}(L)- B_{(y_0, 0)}(r))\cap (Y\times \R_{\leq0})
\end{align*}
is also path connected.

For any $(y_1,t_1) \in X_+$, $ (y_2,t_2)\in X_-$, since $r< \frac{1}{2}\mathrm{Diam}(Y, d)$, there is a point $y'\in Y$ such that $r<d(y',y_0)<L$. Since $X_+$, $X_-$ are path connected, there is a path $\eta_1$ from $(y_1,t_1)$ to $(y',0)$, and a path $\eta_2$ from $(y_2,t_2)$ to $(y',0)$. Then $\eta_1*\eta_2$ is a path from $(y_1,t_1)$ to $(y_2,t_2)$. Hence $B_{(y_0, 0)}(L)- B_{(y_0, 0)}(r)=X_+\cup X_-$ is path connected.
\end{proof}

\begin{definition}\label{def tau nbhd of W}
{Assume $(X, d)$ is a metric space, for any set $W\subseteq X$ and $\tau> 0$, we define the $\tau$-neighborhood of $W$ as follows:
\begin{align}
U_\tau (W)\vcentcolon = \{x\in X: d(x, W)< \tau\}.  \nonumber 
\end{align} 
}
\end{definition}

\begin{lemma}\label{lem diam of partial geod spheres}
{Assume $(M^3, g)$ has only one end with $Rc\geq 0$ and $R\geq 2$. We also assume it is simply connected.  For any $p\in M^3$ and a unit speed geodesic ray $\gamma$ starting from $p$,  any $ \varepsilon\in (0,  1),  \delta\in (0,\frac{1}{100})$ and $L> 8\pi$,  there is $k_0= k_0(\delta, \varepsilon, L)\in \mathbb{Z}^+$ such that 
\begin{align}
\sup_{x, y\in \big(\partial B_p(r)\cap B_{\gamma(k\cdot L)}(5L)\big)}d_g(x, y)\leq \frac{4\pi}{\sqrt{3(1- \frac{2\pi^2(1+ \varepsilon)^2}{L^2})}}+ 9\delta\cdot L, \quad \quad \quad \forall k\geq k_0, r> 0.  \nonumber
\end{align}
}
\end{lemma}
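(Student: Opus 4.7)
The plan is to combine Cheeger--Colding's almost splitting theorem with the diameter control on the separating $\mu$-bubble surface from Proposition \ref{prop volume of separating bubble surfaces}. The guiding idea is that, far out along the ray $\gamma$, the ball $B_{\gamma(kL)}(10L)$ is Gromov--Hausdorff close to a ball in a product $Y\times\mathbb{R}$, and both the geodesic sphere slice and the $\mu$-bubble $\Sigma_k$ can be compared inside this almost-product.

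First, I would apply Cheeger--Colding's almost splitting theorem: since $(M^3,g)$ has $Rc\geq0$ and the ray $\gamma$ (supplied with the non-collapsing coming from linear volume growth under the hypothesis $Rc\geq0$, $R\geq2$, and the one-end condition), for $k\geq k_0(\delta,\varepsilon,L)$ sufficiently large, there exists a $\delta L$-Gromov--Hausdorff approximation $\Phi\colon B_{\gamma(kL)}(10L)\to B_{(y_0,0)}(10L)\subseteq Y\times\mathbb{R}$, where $Y$ is a length space and $\Phi(\gamma(kL))=(y_0,0)$. Along this approximation the Busemann function of $\gamma$ corresponds to the $\mathbb{R}$-coordinate, and one has $|d_M(p,x)-kL-\pi_\mathbb{R}(\Phi(x))|\leq\delta L$ on $B_{\gamma(kL)}(5L)$.

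Second, for the given $r>0$ I would pick the integer $j\in\{-5,\dots,4\}$ with $r\in[(k+j)L,(k+j+1)L)$ (when the intersection is nonempty), and invoke Proposition \ref{prop volume of separating bubble surfaces} with index $k+j$ (enlarging $k_0$ by $5$ if necessary) to obtain a separating $\mu$-bubble surface $\Sigma_{k+j}\subset N_{k+j}$ with intrinsic diameter bounded by $D\vcentcolon=\frac{2\pi}{\sqrt{3(1-2\pi^2(1+\varepsilon)^2/L^2)}}$. In the GH-limit the annulus $N_{k+j}$ corresponds to the slab $Y\times[jL,(j+1)L]$ (up to $O(\delta L)$), and $\Phi(\Sigma_{k+j})$ separates its two boundary slices in this slab.

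The key observation is that the ray segment $\gamma\big|_{[(k+j)L,(k+j+1)L]}$ lies in $\overline{N_{k+j}}$ and joins $\partial E_{k+j}$ to $\partial E_{k+j+1}$, so by the separating property (Definition \ref{def seperating property} and Proposition \ref{prop volume of separating bubble surfaces}(1)) it intersects $\Sigma_{k+j}$ at some $\gamma(t^{*})$; since $\Phi(\gamma(t^{*}))$ has $Y$-projection at $y_0$, one concludes $y_0\in\pi_Y(\Phi(\Sigma_{k+j}))$ up to $\delta L$-error. Likewise, for any $x\in\partial B_p(r)\cap B_{\gamma(kL)}(5L)$ with $\Phi(x)=(\tilde y,\tilde t)$, the vertical segment $\{\tilde y\}\times[jL,(j+1)L]$ is contained in the approximating region (since $\tilde y\in B_{y_0}(5L)$ and $\sqrt{(5L)^2+L^2}<10L$) and must hit $\Phi(\Sigma_{k+j})$, whence $\tilde y\in\pi_Y(\Phi(\Sigma_{k+j}))$ up to $O(\delta L)$. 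Using $\mathrm{Diam}(\pi_Y(\Phi(\Sigma_{k+j})))\leq\mathrm{Diam}(\Sigma_{k+j})+\delta L\leq D+\delta L$ and the triangle inequality through the distinguished point $y_0$, I get $d_Y(\tilde y_1,\tilde y_2)\leq 2(D+\delta L)$ for any two points $x_1,x_2$ of the geodesic sphere slice. Combining with $|\tilde t_1-\tilde t_2|=O(\delta L)$ (both approximate $r-kL$) and the GH-error in reconstructing $d_M$ from $d_{Y\times\mathbb{R}}$, one arrives at $d_M(x_1,x_2)\leq\frac{4\pi}{\sqrt{3(1-2\pi^2(1+\varepsilon)^2/L^2)}}+9\delta L$, which is the desired estimate; the factor of $2$ on the leading constant is exactly what the triangle inequality through $y_0$ forces.

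The main obstacle will be in carefully setting up the almost splitting in the correct geometric regime: one must ensure non-collapsing of the scale-$L$ balls along $\gamma$ (so that a nontrivial $Y$-factor appears) and verify that $\Phi$-images of the separating $\Sigma_{k+j}$ genuinely separate the two boundary slices of the limiting slab, so that the surjectivity of $\pi_Y$ onto the relevant region of $Y$ actually holds. The cleanest way is to pick the approximating radius $10L$ generously enough so that every vertical segment $\{\tilde y\}\times[jL,(j+1)L]$ with $\tilde y\in B_{y_0}(5L+\delta L)$ is interior to the approximating region, and then to push the separation property forward using the GH-approximation together with Lemma \ref{lem connected of some set} to exclude pathological disconnected configurations on the $Y$-side; the remaining work is bookkeeping of error terms to fit them inside $9\delta L$.
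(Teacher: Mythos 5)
Your high-level strategy matches the paper's: use Cheeger--Colding almost splitting to get a GH-approximation to a product $Y\times\mathbb{R}$, then exploit the diameter bound on the separating $\mu$-bubble from Proposition~\ref{prop volume of separating bubble surfaces}. But there is a genuine gap at the central step. You assert that the vertical segment $\{\tilde y\}\times[jL,(j+1)L]$ in $Y\times\mathbb{R}$ ``must hit $\Phi(\Sigma_{k+j})$,'' and more generally that the $\Phi$-image of a separating surface in $M$ separates the corresponding slab in $Y\times\mathbb{R}$. This does not follow: a Gromov--Hausdorff approximation is merely a $\delta L$-near-isometry in the metric sense, it is not continuous, and the image of a topologically separating set under a discontinuous map need not separate anything. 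Your own final paragraph flags this (``verify that $\Phi$-images of the separating $\Sigma_{k+j}$ genuinely separate the two boundary slices'') but does not supply the verification, and this is precisely where the difficulty lives. Attempting to patch it by pulling the vertical segment back to a curve in $M$ also fails directly, since GH-approximations do not provide preimages of curves.

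The paper avoids this entirely by never asking $\Phi(\Sigma_k)$ to separate in the limit. Instead it bounds $\mathrm{Diam}(Y,d)$ outright by $2(c+4\delta L)$ via contradiction: if $\mathrm{Diam}(Y)$ were larger, then by Lemma~\ref{lem connected of some set} the annulus $B_{z_1}(L)-B_{z_1}(c+4\delta L)$ in $Y\times\mathbb{R}$ would be \emph{connected} (this is where the length-space hypothesis on $Y$ does the work, and where your invocation of Lemma~\ref{lem connected of some set} belongs); on the other hand, the separating property of $\Sigma_k$ \emph{in $M$} splits $(E_{k-1}-\overline{E_{k+2}})-\Sigma_k$ into two pieces $A_1\sqcup A_2$, and the GH-distortion estimate shows their $f$-images stay a definite distance $\geq\delta L$ apart after removing a small neighborhood $S_k\subset B_{z_1}(c+4\delta L)$ of $f(\Sigma_k)$ — forcing the annulus to be disconnected, a contradiction. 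Only metric estimates on $f$ are used; no separation in $Y\times\mathbb{R}$ is ever claimed. Once $\mathrm{Diam}(Y)\leq 2c+8\delta L$ is established, the bound for all $r$ follows in one line since both points have the same $\mathbb{R}$-coordinate under $f$. This also removes the index-shifting $\Sigma_{k+j}$ bookkeeping you introduce, since the diameter bound on $Y$ is global and immediately handles every $r$ with $\partial B_p(r)\cap B_{\gamma(kL)}(5L)\neq\emptyset$. To complete your proposal you would need to replace the ``vertical segment hits $\Phi(\Sigma)$'' claim with an argument of this connectedness-vs-separated-covering type, which amounts to re-deriving the paper's Step~(2).
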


\pf
{\textbf{Step (1)}.  In the rest of the argument,  we assume $\displaystyle c= \frac{2\pi}{\sqrt{3(1- \frac{2\pi^2(1+ \varepsilon)^2}{L^2})}}$.  

By Cheeger-Colding's almost splitting theorem \cite[Theorem $6.62$]{CC-Ann},  for any $\delta\in (0,  \frac{1}{100})$,  there is $k_1= k_1(\delta)\in \mathbb{Z}^+$ such that for all $L> 0$, the following holds:
\begin{align}
d_{GH}\Big(B_{\gamma(kL)}(5L)\subseteq (M^3, g),  B_{(y_0,  0)}(5L)\subseteq (Y\times \mathbb{R},  d\times d_{\mathbb{R}})\Big)\leq \delta\cdot L, \quad \quad \quad \forall k\geq k_1, \nonumber 
\end{align}
where $(Y, d)$ is a length space.

Furthermore,  the map $f: B_{\gamma(kL)}(5L)\rightarrow B_{(y_0,  0)}(5L)$ defined as follows is a $(\delta\cdot L)$-Gromov-Hausdorff approximation:
\begin{align}
f(x)= (\varphi(x),  d(p, x)- kL),  \nonumber 
\end{align}
where $\varphi: B_{\gamma(kL)}(5L)\rightarrow Y$ is a suitable map satisfying $\varphi(\gamma(t))= y_0$ for any $t\in (kL- 5L, kL+ 5L)$.  

For $\varepsilon\in (0, 1), L> \sqrt{2}\pi(1+ \varepsilon)$, from Proposition \ref{prop volume of separating bubble surfaces}; there is $k_2= k_2(\varepsilon, L)\in \mathbb{Z}^+$, and for each $k\geq k_2$, we can choose $\Sigma_k\in N_k$ as in Proposition \ref{prop volume of separating bubble surfaces}. 

We define $k_0\vcentcolon= k_0(\delta, \varepsilon, L)= k_1(\delta)+ k_2(\varepsilon, L)$, in the rest argument we always assume $k\geq k_0$.

Now we define
\begin{align}
t_k\vcentcolon= \min\{t> 0: \gamma(t)\in \Sigma_k\}.  \nonumber 
\end{align}

From the separating property of $\Sigma_k$,  we know that 
\begin{align}
(E_{k- 1}- \overline{E_{k+ 2}})- \Sigma_k= A_1\bigsqcup A_2, \label{seperation of A1 A2}
\end{align}
where $A_i$ is one of the two connected components of $(E_{k- 1}- \overline{E_{k+ 2}})- \Sigma_k$ and $i= 1, 2$ (see Figure \ref{fig: separtion of A1 A2}).

\begin{figure}[H]
    \centering
    \includegraphics[width=0.9\linewidth]{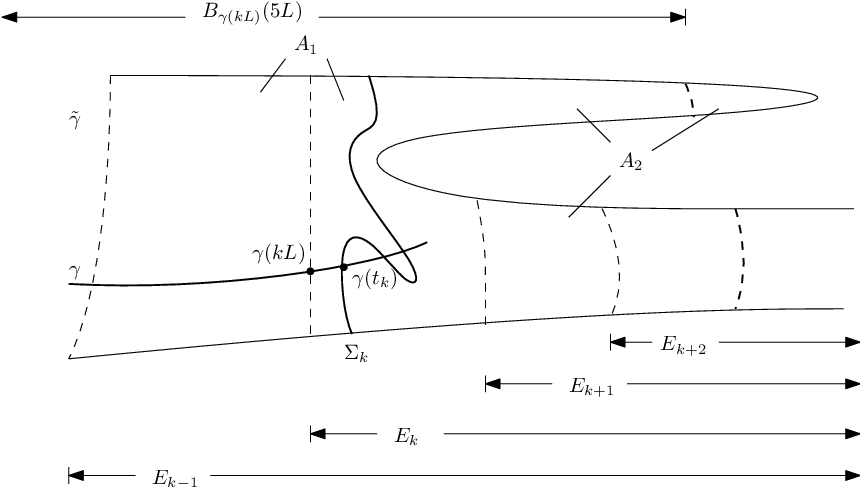}
    \caption{Picture for (\ref{seperation of A1 A2})}
    \label{fig: separtion of A1 A2}
\end{figure}

\textbf{Step (2)}.  Next we show
\begin{align}
\mathrm{Diam}(Y, d)\leq 2(c+ 4\delta \cdot L).  \label{bound of diameter of Y}
\end{align}

By contradiction,  assume (\ref{bound of diameter of Y}) does not hold. Note $\delta< \frac{1}{100}$ and $L> 8\pi$,  then we further have 
$\displaystyle c+ 4\delta \cdot L< \min\{\frac{1}{2}\mathrm{Diam}(Y, d),  L\}$. By Lemma \ref{lem connected of some set},  we obtain that $B_{z_1}(L)- B_{z_1}(c+ 4\delta L)$ is connected,  where $z_1\vcentcolon= (y_0,  t_k- kL)\in Y\times \mathbb{R}$.  

From Proposition \ref{prop volume of separating bubble surfaces}, we know that $\mathrm{Diam}(\Sigma_k)\leq c$. Define $\displaystyle S_k\vcentcolon= U_{\delta L}\Big(f\big[U_{2\delta L}(\Sigma_k)\big]\Big)$, then we have
\begin{align}
S_k\subseteq B_{z_1}(c+ 4\delta L).  \label{inclusion of Sk}
\end{align}

Now we define $\displaystyle \Lambda_i\vcentcolon = B_{z_1}(L)\cap \Big[U_{\delta\cdot L}(f(A_i))- S_k\Big]$ for $i= 1, 2$.  Note
\begin{align}
B_{z_1}(L)&\subseteq U_{\delta\cdot L}\Big(f(E_{k- 1}- \overline{E}_{k+ 2})\Big) \subseteq U_{\delta\cdot L}\Big(f(A_1)\Big)\bigcup U_{\delta\cdot L}\Big(f(A_2)\Big) \bigcup U_{\delta\cdot L}\Big(f(\Sigma_k)\Big). \nonumber 
\end{align}

Combining the above,  we obtain (see Figure \ref{fig: main set equa})
\begin{align}
B_{z_1}(L)= \Lambda_1\cup S_k\cup \Lambda_2. \label{main set equation}
\end{align}

\begin{figure}[H]
    \centering
    \includegraphics[scale=0.8]{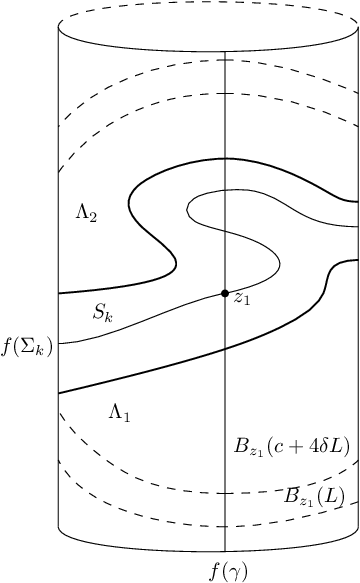}
    \caption{Picture for (\ref{main set equation})}
    \label{fig: main set equa}
\end{figure}

From (\ref{main set equation}) and (\ref{inclusion of Sk}),  we have
\begin{align}
\Big[B_{z_1}(L)- B_{z_1}(c+ 4\delta L)\Big]\subseteq \Lambda_1\cup \Lambda_2. \label{inclusion into two parts} 
\end{align}

Now for any $q_i\in \Lambda_i- S_k$,  there are $\tilde{q}_i\in A_i- U_{2\delta L}(\Sigma_k)$ such that $d(q_i, f(\tilde{q}_i))< \delta\cdot L$.  Therefore
\begin{align}
d(q_1, q_2)&\geq d(f(\tilde{q}_1), f(\tilde{q}_2))- 2\delta L\geq d(\tilde{q}_1, \tilde{q}_2)- 3\delta L \nonumber \\
&\geq d(\tilde{q}_1, \Sigma_k)+ d(\tilde{q}_2, \Sigma_k)- 3\delta L\geq \delta L> 0.   \label{disjoint} 
\end{align}

By (\ref{inclusion of Sk}) and (\ref{disjoint}), we get that 
\begin{align}
d\Big(\Lambda_1\cap \Big[B_{z_1}(L)- B_{z_1}(c+ 4\delta L)\Big], \Lambda_2\cap \Big[B_{z_1}(L)- B_{z_1}(c+ 4\delta L)\Big]\Big)\geq \delta L. \label{dist between two subsets}
\end{align}

Now from (\ref{inclusion into two parts}), we get that 
\begin{align*}
&\Big\{\Lambda_1\cap \Big[B_{z_1}(L)- B_{z_1}(c+ 4\delta L)\Big]\Big\}\cup \Big\{\Lambda_2\cap \Big[B_{z_1}(L)- B_{z_1}(c+ 4\delta L)\Big]\Big\}\\
&= \Big[B_{z_1}(L)- B_{z_1}(c+ 4\delta L)\Big].
\end{align*}
Combining (\ref{dist between two subsets}), we get that $\Big[B_{z_1}(L)- B_{z_1}(c+ 4\delta L)\Big]$ is not connected,  which leads to a  contradiction.  We complete the proof of (\ref{bound of diameter of Y}).

By the definition of the $(\delta\cdot L)$-Gromov-Hausdorff approximation map $f$ and (\ref{bound of diameter of Y}), for any $x, y\in \big(\partial B_p(r)\cap B_{\gamma(k\cdot L)}(5L)\big)$, we get
\begin{align}
d_g(x, y)&\leq d(f(x), f(y))+ \delta\cdot L= d((\varphi(x), r- kL), (\varphi(y), r- kL))+ \delta\cdot L\nonumber \\
&= d_Y(\varphi(x), \varphi(y))+ \delta\cdot L\leq 2c+ 9\delta\cdot L. \nonumber  
\end{align}
}
\qed

The main result of this section is Lemma \ref{lem annulus is not too far from p},  which is an improvement of \cite[Claim $3$,  page $5$]{CLS}.  In fact, this improvement is asymptotically sharp,  which will be used in the proof of Theorem \ref{thm sharp volume growth}.  

\begin{lemma}\label{lem annulus is not too far from p}
{Assume $(M^3, g)$ has only one end and $Rc\geq 0, R\geq 2$, furthermore assume it is simply connected.  For any $\varepsilon\in (0, 1), \delta\in (0, \frac{1}{100}), L> 8\pi$, there is $k_0= k_0(\varepsilon, \delta, L)\in \mathbb{Z}^+$; such that (see Figure \ref{fig:lem annulus is not too far from p})
\begin{align}
N_k\subseteq \Big[B_p((k+ 1)L+ 2c+ 10\delta\cdot L)- \overline{B_p(kL)}\Big],  \quad \quad \quad \forall k\geq k_0,   \label{annulus inclusion relation}
\end{align}
where $\displaystyle c= \frac{2\pi}{\sqrt{3(1- \frac{2\pi^2(1+ \varepsilon)^2}{L^2})}}$. 
}
\end{lemma}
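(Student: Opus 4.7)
The plan is to proceed by contradiction: assume there exist $k\geq k_0$ (with $k_0$ to be chosen large) and $x\in N_k$ with $d(p,x)>(k+1)L+2c+10\delta L$. I will construct a continuous path in $M-\overline{B_p((k+1)L)}$ from $x$ to a point of the unbounded component $E_{k+1}$, placing $x$ in $E_{k+1}$ and contradicting $x\in N_k=E_k-\overline{E_{k+1}}$. The other inclusion $N_k\subseteq M-\overline{B_p(kL)}$ is immediate from $N_k\subseteq E_k$.

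The three ingredients are: (a) Cheeger--Colding's almost splitting applied to $B_{\gamma((k+1)L)}(5L)$ (the same setup as in the proof of Lemma \ref{lem diam of partial geod spheres}), producing a $(\delta L)$-Gromov--Hausdorff approximation $f:B_{\gamma((k+1)L)}(5L)\to B_{(y_0,0)}(5L)\subseteq Y\times\mathbb{R}$, $f(z)=(\varphi(z),d(p,z)-(k+1)L)$, with the already-established bound $\mathrm{Diam}(Y,d)\leq 2c+8\delta L$; (b) Proposition \ref{prop connected property of annulus}, which makes $\overline{N_k}=\overline{E_k}-E_{k+1}$ connected; and (c) Proposition \ref{prop volume of separating bubble surfaces}, providing a separating $\mu$-bubble surface $\Sigma_k\subseteq N_k$ of diameter at most $c$, together with a point $\gamma(t_k)\in\Sigma_k$ with $t_k\in(kL,(k+1)L)$. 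Choose $k_0$ large enough so all three apply.

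I then proceed in four steps. (i) Using (b), fix a continuous path $\alpha:[0,1]\to\overline{N_k}$ from $x$ to $\gamma(t_k)$. Since $d(\gamma((k+1)L),\alpha(1))=(k+1)L-t_k<L$, an intermediate value argument on $d(\gamma((k+1)L),\alpha(\cdot))$ combined with the variation of $d(p,\alpha(\cdot))$ along $\alpha$ locates $z=\alpha(s^\ast)$ satisfying both $d(p,z)>(k+1)L$ and $z\in B_{\gamma((k+1)L)}(5L-\delta L)$. (ii) Then $f(z)=(\varphi(z),d(p,z)-(k+1)L)$ has positive second coordinate, and I build a path in $Y\times\mathbb{R}$ from $f(z)$ to $(y_0,\eta)$ (for a small but fixed $\eta>2\delta L$) that first moves horizontally in $Y$ at constant height $d(p,z)-(k+1)L$ and then drops vertically to $\eta$; the bound $\mathrm{Diam}(Y)\leq 2c+8\delta L$ together with $L>8\pi$ keeps this path inside $B_{(y_0,0)}(5L)$ and at height $\geq\eta$. (iii) Pull this back approximately via the $(\delta L)$-GH approximation to a path $\tilde\beta$ in $M$ from $z$ to a $\delta L$-neighborhood of $\gamma((k+1)L+\eta)$; the height buffer $\eta>\delta L$ ensures $d(p,\tilde\beta(\cdot))>(k+1)L$ throughout. (iv) Concatenating $\tilde\beta$ with a short path in the small ball around $\gamma((k+1)L+\eta)$ (which sits inside $E_{k+1}$ since $\eta>\delta L$) places $z$ in the same connected component of $M-\overline{B_p((k+1)L)}$ as $\gamma((k+1)L+\eta)\in E_{k+1}$. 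Hence $z\in E_{k+1}$, contradicting $z\in\overline{N_k}$.

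The main obstacle is Step (i), specifically the scenario where $x$ itself lies outside $B_{\gamma((k+1)L)}(5L)$ (a ``deep bay''): a naive IVT on $d(\gamma((k+1)L),\alpha(\cdot))$ does produce an entry point of $\alpha$ into the ball, but at that point $d(p,\cdot)$ may already have dropped below $(k+1)L$, voiding Step (ii). To resolve this I plan to confine $\alpha$ to the connected component of $\overline{N_k}\cap\{d(p,\cdot)\geq(k+1)L\}$ containing $x$ --- a closed bounded bay whose boundary sits in $\partial B_p((k+1)L)$ --- and then combine Lemma \ref{lem diam of partial geod spheres} (diameter bound of $\partial B_p((k+1)L)\cap B_{\gamma(kL)}(5L)$) with the separating property of $\Sigma_k$ to force this closed bay to meet $B_{\gamma((k+1)L)}(5L-\delta L)$, producing the required $z$.
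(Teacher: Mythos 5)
Your proposal is a genuinely different route from the paper's, but it has a gap at the crucial place. The paper does not attempt to connect $x'$ to $E_{k+1}$ by a path. Instead, it replaces $x'$ by the point $x''$ on the minimizing segment $\overline{px'}$ with $d(p,x'')=(k+1)L+2c+10\delta L$ exactly. Because $\overline{px'}$ and the ray $\gamma$ both cross the separating $\mu$-bubble surface $\Sigma_{k-1}\subseteq N_{k-1}$, whose diameter is at most $c$, one gets $d(x'',\gamma(kL))\leq 5L$ unconditionally --- no matter how deep the ``bay'' containing $x'$ is. Then Lemma \ref{lem diam of partial geod spheres}, applied to the two points $x''$ and $\gamma((k+1+10\delta)L+2c)$ of $\partial B_p((k+1+10\delta)L+2c)\cap B_{\gamma(kL)}(5L)$, gives $d(x'',\gamma)\leq 2c+9\delta L$; and since $x''\notin E_{k+1}$ while the tail of $\gamma$ lies in $E_{k+1}$, every path from $x''$ to $\gamma$ must cross $\partial B_p((k+1)L)$, giving $d(x'',\gamma)\geq 2c+10\delta L$. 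That is the whole proof: a purely metric contradiction, with no path construction or GH pull-back needed.

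Your Step (i) is precisely where the missing idea lies. You correctly flag the ``deep bay'' as the obstacle, but your proposed fix is not established: the ``closed bounded bay'' $B$ --- a component of $\overline{N_k}\cap\{d(p,\cdot)\geq (k+1)L\}$ --- is not known to be bounded (that is essentially what the lemma is trying to prove), and you do not explain the mechanism by which the separating property of $\Sigma_k$ together with Lemma \ref{lem diam of partial geod spheres} would force $B$ to meet $B_{\gamma((k+1)L)}(5L-\delta L)$. The fix that actually works is to take $z$ on the minimizing geodesic $\overline{px}$ and route the ``stay close to $\gamma$'' argument through $\Sigma_{k-1}$, exactly as the paper does with $x''$; your outline never uses the geodesic $\overline{px}$. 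A secondary issue is Step (iii): a $(\delta L)$-Gromov--Hausdorff approximation is not continuous or invertible, so ``pulling back a path'' must be done by discretizing and joining approximate preimages by short geodesics, and the stated buffer $\eta>\delta L$ understates the margin needed once the discretization scale and the two-sided GH error are accounted for. This is repairable, but it illustrates why the paper's argument, which reduces the separation topology to the single metric inequality $d(x'',\gamma)\geq d(x'',\partial B_p((k+1)L))$, is considerably cleaner than reconstructing a path across the almost-splitting.
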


\begin{figure}[H]
    \centering
    \includegraphics[width=\linewidth]{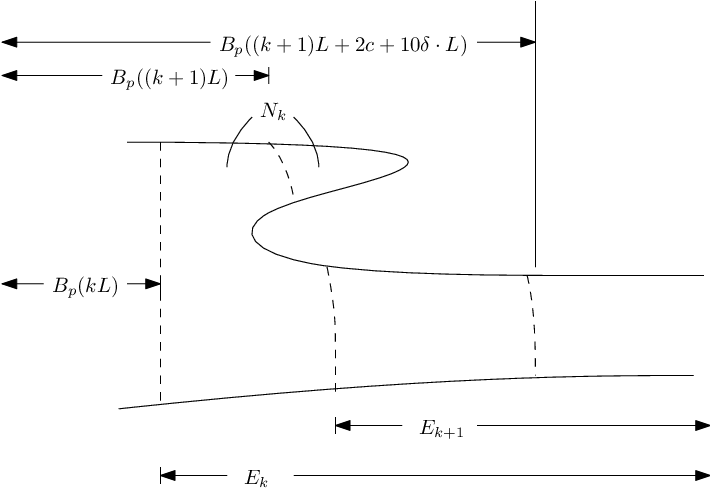}
    \caption{Picture for (\ref{annulus inclusion relation})}
    \label{fig:lem annulus is not too far from p}
\end{figure}

\pf
{\textbf{Step (1)}. Choose $k_0$ from Lemma \ref{lem diam of partial geod spheres}.   

By contradiction.  Otherwise,  there is $x'\in  N_k- B_p((k+ 1)L+ 2c+ 10\delta\cdot L)$, where $L> 8\pi$ and $k\geq k_0$ is some integer.  Let $\tilde{\gamma}$ be the geodesic segment $\overline{px'}$,  and define $x''= \tilde{\gamma}\cap \partial B_p((k+ 1)L+ 2c+ 10\delta\cdot L)$.  

Choose a ray $\gamma$ with $\gamma(0)= p$ and $\gamma\big|_{(kL,\infty)}\subseteq E_k$.  By the choice of $L$, we have $c\leq \frac{1}{4}L$.

By Proposition \ref{prop volume of separating bubble surfaces}, we can find one $\mu$-bubble surface $\Sigma_{k- 1}\subseteq N_{k- 1}$ with $\mathrm{Diam}(\Sigma_{k- 1})\leq c$, which separates $\partial E_{k- 1}$ from $\partial E_k$. So we can take $x_{k- 1}\in (\tilde{\gamma}\cap \Sigma_{k- 1}), y_{k- 1}\in (\gamma\cap \Sigma_{k- 1})$. Then we have (see Figure \ref{fig:proof in lem annulus}.)
\begin{align}
d(x'', \gamma(kL))&\leq d(x'',  x_{k- 1})+ d(x_{k- 1},  y_{k- 1})+ d(y_{k- 1},  \gamma(kL))\leq (2L+ 2c+ 10\delta L)+ c+ L  \nonumber \\
&\leq 5L.   \label{metric ineq for points}
\end{align}

Therefore $x''\in B_{\gamma(kL)}(5L)$.

\begin{figure}[H]
    \centering
    \includegraphics[width=\linewidth]{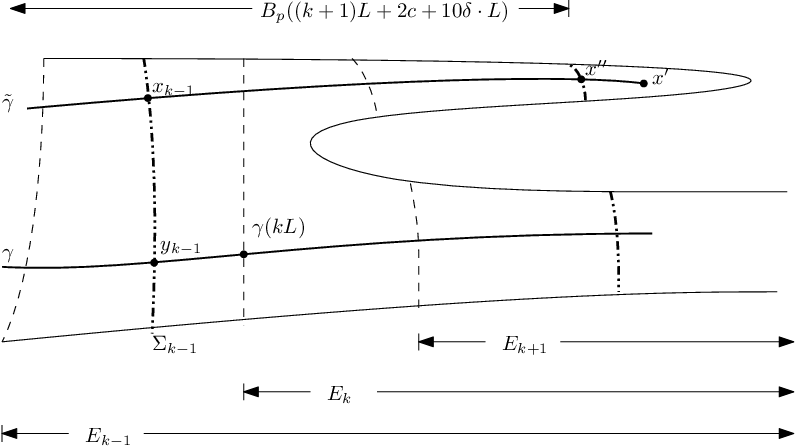}
    \caption{Picture for (\ref{metric ineq for points})}
    \label{fig:proof in lem annulus}
\end{figure}

\textbf{Step (2)}.  Note $x'', \gamma((k+ 1+ 10\delta)L+ 2c)\in \partial B_p((k+ 1+ 10\delta)L+ 2c)\cap B_{\gamma(kL)}(5L)$,  from Lemma \ref{lem diam of partial geod spheres},  we have
\begin{align}
d(x'',  \gamma((k+ 1+ 10\delta)L+ 2c))\leq  2c+ 9\delta\cdot L.  \label{x'' and gamma is close}
\end{align}

On the other hand,  because $x''\in \overline{N_k}$,  the set $\partial B_p((k+ 1)L)$ separates $x''$ from $\gamma$.  From (\ref{x'' and gamma is close}),  we have
\begin{align}
2c+ 9\delta\cdot L\geq d(x'',  \gamma)\geq d(x'',  \partial B_p((k+ 1)L))= 2c+ 10\delta\cdot L.  \nonumber 
\end{align}
This is the contradiction.
}
\qed

\section{The volume comparison of hypersurfaces}\label{sec area compa}

For any $q\in M^n$,  we can write $q= (r, \theta)$ in terms of polar normal coordinates at $p$, where $r= d(p, q)$.  And it is well known that we can write the volume element of $(M^n,  g)$ as 
\begin{align}
d\mu_g= J(t, \theta)dtd\theta,  \nonumber 
\end{align}
where $d\theta$ is the area element of the unit $(n- 1)$-sphere $\mathbb{S}^{n- 1}$. The area element of geodesic sphere $\partial B_p(t)$ is given by $J(t, \theta)d\theta$.



\begin{lemma}\label{lem det of positive semi definite matrix}
    Let $X, Y$ be two positive semi-definite $(n\times n)$-matrices, then
    \begin{align*}
        \det(X+Y)\geq \det(X)+\det(Y).
    \end{align*}
\end{lemma}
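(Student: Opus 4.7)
The plan is to reduce the inequality to the scalar fact that
\[
\prod_{i=1}^n (1+\lambda_i) \geq 1 + \prod_{i=1}^n \lambda_i \qquad \text{for } \lambda_i \geq 0,
\]
by simultaneously diagonalizing $X$ and $Y$ via a congruence. First I would treat the generic case in which $X$ is strictly positive definite; the degenerate semi-definite case will be handled at the end by perturbation.

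Assuming $X > 0$, I would take the positive definite square root $X^{1/2}$ and factor
\[
X + Y = X^{1/2}\bigl(I + X^{-1/2} Y X^{-1/2}\bigr) X^{1/2}.
\]
Setting $Z := X^{-1/2} Y X^{-1/2}$, the matrix $Z$ is symmetric and positive semi-definite (since $Y$ is), so it has nonnegative eigenvalues $\lambda_1,\dots,\lambda_n$. Taking determinants gives
\[
\det(X+Y) = \det(X)\cdot \prod_{i=1}^n (1+\lambda_i).
\]
The elementary scalar inequality above — which follows by simply expanding the product and noting every cross term is nonnegative — then yields
\[
\det(X+Y) \geq \det(X)\bigl(1 + \textstyle\prod_i \lambda_i\bigr) = \det(X) + \det(X)\det(Z) = \det(X) + \det(Y),
\]
using $\det(Y) = \det(X^{1/2} Z X^{1/2}) = \det(X)\det(Z)$.

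For the general case where $X$ is only positive semi-definite, I would apply the above to $X_\epsilon := X + \epsilon I$, which is positive definite for every $\epsilon > 0$, obtaining $\det(X_\epsilon + Y) \geq \det(X_\epsilon) + \det(Y)$, and then let $\epsilon \to 0^+$ using continuity of the determinant. The only subtle step is the reduction via congruence, but once that is set up the scalar inequality $\prod(1+\lambda_i) \geq 1 + \prod \lambda_i$ is transparent, so I do not anticipate a serious obstacle; the main thing to be careful about is simply invoking the perturbation argument cleanly so as not to require $X$ to be invertible.
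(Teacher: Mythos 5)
Your proof is correct and follows essentially the same approach as the paper: congruence by $X^{1/2}$, reduction to $\prod_i(1+\lambda_i)\ge 1+\prod_i\lambda_i$ for the nonnegative eigenvalues of $X^{-1/2}YX^{-1/2}$, and translation back via $\det(Y)=\det(X)\det(X^{-1/2}YX^{-1/2})$. The only cosmetic difference is the treatment of the degenerate case: the paper disposes of it by noting the claim is trivial when $\det(X)+\det(Y)=0$ and otherwise assuming without loss of generality that $\det(X)>0$ (swapping $X$ and $Y$ if necessary), whereas you replace $X$ by $X+\epsilon I$ and let $\epsilon\to 0^+$; both work, and the swap trick is marginally shorter.
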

\begin{proof} The inequality is trivial when $\det(X)+\det(Y)=0$. Without loss of generality,  we assume $\det(X)>0$ in the following proof.
\begin{align*}
    \det(X+Y)=&\det(\sqrt{X}(I+\sqrt{X}^{-1}Y\sqrt{X}^{-1})\sqrt{X})=\det(X)\det(I+\sqrt{X}^{-1}Y\sqrt{X}^{-1}).
\end{align*}
Let $\lambda_i\geq 0$ be the eigenvalues of $\sqrt{X}^{-1}Y\sqrt{X}^{-1}$, then 
\begin{align*}
    \det(I+\sqrt{X}^{-1}Y\sqrt{X}^{-1})=& \prod_{i=1}^n(1+\lambda_i)\geq1+\prod_{i=1}^n\lambda_i\\
    =& 1+\det(\sqrt{X}^{-1}Y\sqrt{X}^{-1})=1+\det(Y)\det(X)^{-1}.
\end{align*}
We complete the proof.
\end{proof}

For $p\in M$,  we define $\mathrm{Cut}(p)$ as the set of all cut points of $p$.

\begin{lemma}\label{lem volume of surface in term of polar Jac}
  If $(M^n, g)$ is a complete Riemannian manifold and $r: \Omega\rightarrow \mathbb{R}^+$ is a smooth function, where $\Omega\subseteq \mathbb{S}_pM$ is an open set. Let $\varphi(\theta)=\exp_p(r(\theta)\theta)$; then
    \begin{align}
        J_\varphi(\theta)\geq J(r(\theta),\theta), \quad \quad \quad \forall \theta\in \Omega; \nonumber 
    \end{align}
where $J_\varphi(\theta)$ is the Jacobian determinant of $\varphi$. 

\end{lemma}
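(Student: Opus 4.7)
The plan is to reduce the inequality to the algebraic statement in Lemma \ref{lem det of positive semi definite matrix} by an orthogonal decomposition of $d\varphi_{\theta_0}$. At a fixed $\theta_0 \in \Omega$ I would pick an orthonormal basis $\{e_1,\ldots,e_{n-1}\}$ of $T_{\theta_0}\mathbb{S}_pM$ and write $\varphi = \exp_p \circ F$, where $F(\theta) = r(\theta)\theta$ is a smooth map into $T_pM$. Under the natural identification $T_{F(\theta_0)}T_pM \cong T_pM$, the Leibniz rule gives
\begin{align*}
d\varphi_{\theta_0}(e_i) = d(\exp_p)_{r(\theta_0)\theta_0}\bigl(dr_{\theta_0}(e_i)\,\theta_0 + r(\theta_0)\,e_i\bigr).
\end{align*}
Gauss's lemma then splits this into a radial component $a_i\,\partial_r$ with $a_i := dr_{\theta_0}(e_i)$, and an angular component $\Phi_i := d(\exp_p)_{r(\theta_0)\theta_0}(r(\theta_0)e_i)$ which is tangent to the geodesic sphere $\partial B_p(r(\theta_0))$ at $\varphi(\theta_0)$ and orthogonal to $\partial_r$.

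With this decomposition, the Gram matrix $G_{ij} = \langle d\varphi_{\theta_0}(e_i), d\varphi_{\theta_0}(e_j)\rangle$ splits cleanly as $G = A + B$ with $A_{ij} = a_i a_j$ and $B_{ij} = \langle \Phi_i,\Phi_j\rangle$. Both $A$ and $B$ are positive semi-definite: $A$ is the rank-one outer product of the vector $(a_1,\ldots,a_{n-1})$ with itself, and $B$ is a Gram matrix. The key recognition is that $B$ is exactly the pullback of $g$ via the map $\theta \mapsto \exp_p(r(\theta_0)\theta)$ from $\mathbb{S}_pM$ into $\partial B_p(r(\theta_0))$, so by the very definition of $J$ as the density of the area element on the geodesic sphere, $\det B = J(r(\theta_0),\theta_0)^2$.

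To finish, I would apply Lemma \ref{lem det of positive semi definite matrix} to $A$ and $B$ to conclude
\begin{align*}
J_\varphi(\theta_0)^2 = \det G = \det(A+B) \geq \det A + \det B \geq \det B = J(r(\theta_0),\theta_0)^2,
\end{align*}
and then take square roots. The argument is essentially computational and I do not anticipate a substantive obstacle. The only point that requires care is the correct invocation of Gauss's lemma to guarantee that the radial and angular pieces of $d\varphi_{\theta_0}(e_i)$ are genuinely orthogonal, so that the Gram matrix decomposes as a sum with no cross terms; once that is in place, the algebraic lemma closes the argument immediately.
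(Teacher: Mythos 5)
Your proof is correct and follows essentially the same route as the paper's: an orthogonal decomposition of $d\varphi$ into radial and angular parts via Gauss's lemma, identification of the angular Gram matrix with $J(r(\theta_0),\theta_0)^2$ through the map $\theta\mapsto\exp_p(r(\theta_0)\theta)$, and then Lemma \ref{lem det of positive semi definite matrix} applied to the resulting sum of positive semi-definite matrices. The only cosmetic difference is that you work at a single point in an abstract orthonormal frame, whereas the paper uses normal coordinates $\theta^1,\ldots,\theta^{n-1}$ on $\mathbb{S}_pM$; these are equivalent.
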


\begin{proof}[Proof:] Assume the local normal coordinates of $\mathbb{S}_pM$ are $\theta^1,\theta^2,\cdots,\theta^{n-1}$, then
\begin{align*}
        J_\varphi(\theta)=\sqrt{\det\Bigg[\Bigg(\Big<\frac{\partial \varphi(\theta)}{\partial \theta^i},\frac{\partial \varphi(\theta)}{\partial \theta^j}\Big>_q\Bigg)_{1\leq i,j\leq n-1}\Bigg]}.
    \end{align*}
where $\bracket{\cdot,  \cdot}_q$ is the inner product with respect to the metric on $T_{q}M$ and $q= \varphi(\theta)$.

Note
    \begin{align*}
        \frac{\partial \varphi(\theta)}{\partial \theta^i}=&\d\exp_p\big(r(\theta)\theta\big)\left(\frac{\partial \big(r(\theta)\theta\big)}{\partial \theta^i}\right) =\d\exp_p\big(r(\theta)\theta\big)\bigg(\frac{\partial r(\theta)}{\partial \theta^i}\theta + r(\theta)\frac{\partial\theta}{\partial \theta^i}\bigg)\\
        =&\frac{\partial r(\theta)}{\partial \theta^i}\d\exp_p\big(r(\theta)\theta\big)(\theta) + r(\theta)\d\exp_p\big(r(\theta)\theta\big)\bigg(\frac{\partial\theta}{\partial \theta^i}\bigg).
    \end{align*} 
    
    For simplicity, let
    \begin{align*}
        r_i=\frac{\partial r(\theta)}{\partial \theta^i},\quad \quad  A= \d\exp_p\big(r(\theta)\theta\big)(\theta), \quad \quad 
        B_i=r(\theta)\d\exp_p\big(r(\theta)\theta\big)\bigg(\frac{\partial\theta}{\partial \theta^i}\bigg).
    \end{align*}
We have $\displaystyle  \frac{\partial \varphi(\theta)}{\partial \theta^i}=r_iA+B_i$. 

    From the definition, we know $\bracket{\theta,\theta}_p=1$. So
    \begin{align*}
        \bracket{\theta,\frac{\partial \theta}{\partial \theta^i}}_p=0, \quad i = 1,2,\cdots,n-1.
    \end{align*}
    From Gauss lemma, 
    \begin{align*}
        \bracket{A,B_i}_q=&r(\theta)\bracket{\d\exp_p\big(r(\theta)\theta\big)(\theta),\d\exp_p(r(\theta)\theta)\left(\frac{\partial \theta}{\partial \theta^i}\right)}_q        =r(\theta)\bracket{\theta,\frac{\partial \theta}{\partial \theta^i}}_p=0.
    \end{align*}

 From the above we get 
    \begin{align*}
        J_\varphi(\theta)^2=\det\Big[\Big(r_ir_j |A|_q^2+\bracket{B_i,B_j}_q\Big)_{1\leq i,j\leq n-1}\Big].
    \end{align*}

For fixed $t> 0$, define $\displaystyle \psi_t(\theta)=\exp_p(t\theta): \mathbb{S}_p(M)\rightarrow M^n$,  then
    \begin{align*}
        \left.\frac{\partial \psi_t(\theta)}{\partial \theta^i}\right|_{t=r(\theta)}
       &=\left.\d\exp_p(t\theta)\left(t\frac{\partial \theta}{\partial \theta^i}\right)\right|_{t=r(\theta)}        =r(\theta)\d\exp_p\big(r(\theta)\theta\big)\bigg(\frac{\partial\theta}{\partial \theta^i}\bigg)=B_i.
    \end{align*} 
    And the Jacobian of $J_{\psi_t}$ is as follows:
    \begin{align*}
        J_{\psi_t}(\theta)|_{t=r(\theta)}=&\sqrt{\left.\det\Bigg[\Bigg(\Big<\frac{\partial \psi_t(\theta)}{\partial \theta^i},\frac{\partial \psi_t(\theta)}{\partial \theta^j}\Big>_q\Bigg)_{1\leq i,j\leq n-1}\Bigg]\right|_{t=r(\theta)}}\\
        =&\sqrt{\det\Big[\Big(\bracket{B_i,B_j}_q\Big)_{1\leq i,j\leq n-1}\Big]}.
    \end{align*}

Since $\theta^1,\theta^2,\cdots,\theta^{n-1}$ are normal coordinates, 
\begin{align*}
    \bracket{\frac{\partial \theta}{\partial \theta^i}, \frac{\partial \theta}{\partial \theta^j}}=&\delta_{ij},\quad 1\leq i, j\leq n-1. 
\end{align*}

Combining $\bracket{\theta,\theta}_p=1$ and $\bracket{\theta,\frac{\partial \theta}{\partial \theta^i}}_p=0$, we know that $\theta,\frac{\partial \theta}{\partial \theta^1}, \frac{\partial \theta}{\partial \theta^2},\cdots,\frac{\partial \theta}{\partial \theta^{n-1}}$ form an orthonormal basis of $T_pM$.  So $J(t,\theta)=J_{\psi_t}(\theta)$. From Lemma \ref{lem det of positive semi definite matrix}, we have
    \begin{align*}
        &J_\varphi(\theta)^2-J(r(\theta),\theta)^2\\
        =&\det\Big[\Big(r_ir_j |A|_q^2+\bracket{B_i,B_j}_q\Big)_{1\leq i,j\leq n-1}\Big]-\det\Big[\Big(\bracket{B_i,B_j}_q\Big)_{1\leq i,j\leq n-1}\Big]\geq 0.
    \end{align*}
    
The conclusion follows.
\end{proof}

\begin{lemma}\label{lem surface area comparison}
If $(M^n, g)$ has $Rc\geq 0$ and only one end,   suppose $\Sigma \subset  N_{k- 1}$ is a smooth, compact hypersurface separating $\partial E_{k- 1}$ from $\partial E_{k}$, then
\begin{align}
V\Big((\partial B_p(s)\cap N_k)- \mathrm{Cut}(p)\Big)\leq (\frac{s}{(k- 1)L})^{n- 1}\cdot V(\Sigma), \quad \quad \forall s> kL. \nonumber 
\end{align} 
\end{lemma}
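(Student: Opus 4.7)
The plan is to parametrize the set $(\partial B_p(s)\cap N_k)-\mathrm{Cut}(p)$ by the radial directions from $p$, assign to each such direction $\theta$ a ``pull-back'' parameter $r(\theta)\in[(k-1)L,kL]$ where the radial geodesic first meets $\Sigma$, and then combine Lemma~\ref{lem volume of surface in term of polar Jac} with the pointwise Bishop--Gromov monotonicity of $J(t,\theta)/t^{n-1}$ to transfer the area bound from $\Sigma$ back to $\partial B_p(s)\cap N_k$. The logic mirrors the classical Bishop--Gromov comparison of geodesic spheres, but with the ``inner sphere'' $\partial B_p((k-1)L)$ replaced by the separating surface $\Sigma$.

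Concretely, set $\Omega=\{\theta\in\mathbb{S}_pM:\exp_p(s\theta)\in(\partial B_p(s)\cap N_k)-\mathrm{Cut}(p)\}$, so that
\begin{align*}
V\big((\partial B_p(s)\cap N_k)-\mathrm{Cut}(p)\big)=\int_\Omega J(s,\theta)\,d\theta.
\end{align*}
For each $\theta\in\Omega$, the radial geodesic $\gamma_\theta(t)=\exp_p(t\theta)$ is minimizing on $[0,s]$, hence $d(p,\gamma_\theta(t))=t$. Using $q:=\gamma_\theta(s)\in N_k\subset E_k\subseteq E_{k-1}$ and the fact that $E_{k-1},E_k$ are the unbounded components of the complements of $\overline{B_p((k-1)L)},\overline{B_p(kL)}$, a short connectedness argument (examining the connected component of $\gamma_\theta^{-1}(E_j)$ containing $s$, for $j=k-1,k$) forces $\gamma_\theta((k-1)L)\in\partial E_{k-1}$, $\gamma_\theta(kL)\in\partial E_k$, and $\gamma_\theta([(k-1)L,kL])\subseteq\overline{E_{k-1}}-E_k$. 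The separating property of $\Sigma$ (Definition~\ref{def seperating property}) then produces some $r(\theta)\in[(k-1)L,kL]$ with $\gamma_\theta(r(\theta))\in\Sigma$; I take $r(\theta)$ to be the smallest such value, and define $\varphi:\Omega\to\Sigma$ by $\varphi(\theta)=\exp_p(r(\theta)\theta)$.

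By transversality of $\gamma_\theta$ to the smooth hypersurface $\Sigma$, the implicit function theorem gives $r$ smooth on a full-measure open subset of $\Omega$, so Lemma~\ref{lem volume of surface in term of polar Jac} applies a.e.\ and yields $J_\varphi(\theta)\geq J(r(\theta),\theta)$. The hypothesis $Rc\geq 0$ gives the classical pointwise Bishop--Gromov monotonicity of $t\mapsto J(t,\theta)/t^{n-1}$ along each ray (valid up to the cut locus), so
\begin{align*}
J(s,\theta)\leq\Big(\frac{s}{r(\theta)}\Big)^{n-1}J(r(\theta),\theta)\leq\Big(\frac{s}{(k-1)L}\Big)^{n-1}J_\varphi(\theta)
\end{align*}
for a.e.\ $\theta\in\Omega$, using $r(\theta)\geq(k-1)L$. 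Integrating over $\Omega$ reduces the desired inequality to $\int_\Omega J_\varphi\,d\theta\leq V(\Sigma)$.

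For this final step I would apply the area formula $\int_\Omega J_\varphi\,d\theta=\int_\Sigma \#\varphi^{-1}(y)\,dA(y)$: if $\varphi(\theta_1)=\varphi(\theta_2)=y$ with $\theta_1\neq\theta_2$, then $\gamma_{\theta_1}|_{[0,r(\theta_1)]}$ and $\gamma_{\theta_2}|_{[0,r(\theta_2)]}$ are two distinct minimizing geodesics from $p$ to $y$ (both of length $d(p,y)$), so $y\in\mathrm{Cut}(p)$; since $V(\mathrm{Cut}(p)\cap\Sigma)=0$, the multiplicity is at most $1$ a.e.\ on $\Sigma$, giving $\int_\Omega J_\varphi\,d\theta\leq V(\Sigma)$. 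The main obstacle I anticipate is the regularity of $r(\theta)$: ensuring that $r$ is smooth (or at least Lipschitz) on a set of full measure, so that both Lemma~\ref{lem volume of surface in term of polar Jac} and the area formula apply cleanly. This is a standard transversality-plus-Sard argument and should be handled either by partitioning $\Omega$ into countably many pieces on which $r$ is smooth, or by working with $\varphi$ as a Lipschitz map and invoking the Lipschitz area formula.
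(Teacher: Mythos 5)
Your proposal is correct in outline but takes a route that is essentially dual to the paper's, and the duality has a concrete cost. The paper defines the radial projection $\Pi:\tilde\Sigma\to\mathbb{S}_pM$, applies Sard's theorem to $\Pi$ directly, and uses the \emph{local inverses} $(\Pi|_U)^{-1}$ together with the area formula for $\Pi$ restricted to its regular points; the lower bound for $V(\Sigma)$ then comes from the surjectivity $\Theta_k'\subset\Pi(\Sigma')$, so no global single-valued selection is ever needed. You instead build a \emph{global} map $\varphi:\Omega\to\Sigma$ by choosing the first intersection time $r(\theta)$ of the radial geodesic with $\Sigma$, and close the argument using injectivity rather than surjectivity. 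Both routes reduce to Lemma~\ref{lem volume of surface in term of polar Jac} plus Bishop--Gromov, and your connectedness argument for why $\gamma_\theta|_{[(k-1)L,kL]}$ runs through $\overline{E_{k-1}}-E_k$ and must hit $\Sigma$ is correct (and in fact spelled out more carefully than in the paper).

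Two points deserve attention. First, the regularity of $r$ is more delicate than ``transversality plus implicit function theorem'' suggests. Transversality of the first hit at $\theta_0$ does not by itself rule out that nearby rays have a strictly earlier first hit on a different sheet of $\Sigma$ (i.e.\ $r$ can jump downward). What saves you is that $r$ is lower semicontinuous (a limit of first-hit points lands on $\Sigma$ because $\exp_p^{-1}(\tilde\Sigma)$ is closed in the segment domain), and at a $\theta_0$ where the first hit is a regular point of $\Pi$ one also gets upper semicontinuity from the local graph description of $\hat\Sigma=\exp_p^{-1}(\tilde\Sigma)$; only then does $r$ coincide near $\theta_0$ with the smooth graph and the implicit function theorem applies. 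Sard, applied to $\Pi|_{\hat\Sigma}$, makes the bad set of $\theta_0$ measure zero. The ``Lipschitz alternative'' you float does not work --- $r$ can genuinely jump --- so you should commit to the countable-partition version. Second, your final step invokes $V(\mathrm{Cut}(p)\cap\Sigma)=0$, which is neither needed nor obviously true as a standalone statement (the cut locus could a priori contain a piece of the hypersurface). The stronger and cleaner fact, which you actually have available, is that the image of $\varphi$ misses $\mathrm{Cut}(p)$ entirely: since $\theta\in\Omega$ gives $\gamma_\theta|_{[0,s]}$ minimizing and $r(\theta)<s$, the point $\varphi(\theta)$ lies strictly before the cut point along $\gamma_\theta$. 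Hence the multiplicity is at most one at \emph{every} $y\in\Sigma$, with no measure-zero caveat required. With these repairs the argument is sound; the paper's version avoids the regularity bookkeeping by never selecting a first intersection, which is the reason it goes through $\Pi$ rather than through $\varphi$.
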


\begin{proof}[Proof:] Let $\tilde{\Sigma}\vcentcolon= \Sigma-\mathrm{Cut}(p)$, define $\Pi:\tilde{\Sigma}\to \mathbb{S}_p(M)$ as $\displaystyle \Pi(q)=\frac{\exp_p^{-1}(q)}{|\exp_p^{-1}(q)|_{g(p)}}$,  where $\mathbb{S}_pM$ is the unit tangent space of $M$ at $p$. From the definition of cut points,  the map $\Pi$ is a well-defined smooth map.

From the separating property of $\Sigma$, we know that 
\begin{align}
\Theta_{k}\vcentcolon=\frac{1}{s}\exp_p^{-1}\big((\partial B_p(s)\cap N_k)-\mathrm{Cut}(p)\big)\subseteq \Pi(\tilde{\Sigma}). \nonumber 
\end{align}
Applying Sard's theorem on the map $\Pi:\tilde{\Sigma}\to \mathbb{S}_p(M)$, we obtain that the set of critical values of $\Pi$ (denoted as $\mathcal{C}(\Pi)$) satisfies $\mathcal{H}^{n- 1}(\mathcal{C}(\Pi))= 0$, where $\mathcal{H}^{n- 1}$ is $(n- 1)$-dim Hausdorff measure on $\mathbb{S}_pM$.

Define
\begin{align}
\Theta_{k}'= \Theta_{k}- \mathcal{C}(\Pi). \label{0 diff between sets}
\end{align}
Let $\Sigma'\subset (\tilde\Sigma\cap\Pi^{-1}(\Theta_{k}))$ be the regular points of $\Pi$ with image in $\Theta_{k}$. Then $\Sigma'$ is an open subset of $\S_p M$ from the implicit function theorem, and 
\begin{align}
\Theta_{k}'\subset\Pi(\Sigma'). \nonumber 
\end{align}

For any $q_0\in\Sigma'$, there is a local coordinate chart $(U,h)$ of $\Sigma'$ containing $q_0$, such that $\Pi|_{U}$ is a diffeomorphism between $U$ and $\Pi(U)$. And 
\begin{align}
\varphi(\theta)\vcentcolon=\exp_p(r(\theta)\cdot \theta)= (\Pi\big|_U)^{-1}(\theta), \nonumber 
\end{align}
is a smooth function defined on $\Pi(U)\subseteq \mathbb{S}_pM$. 

For any $\theta\in \Theta_{k}'$, note $(k- 1)L\leq r(\theta)\leq kL< s$. From Lemma \ref{lem volume of surface in term of polar Jac} and Bishop-Gromov's volume comparison Theorem, we get
\begin{align}
    J_{(\Pi\big|_U)^{-1}}(\theta)&= J_\varphi(\theta) \geq J(r(\theta),\theta) \geq \bigg(\frac{r(\theta)}{s}\bigg)^{n- 1}J(s,\theta) \geq \bigg(\frac{(k- 1)L}{s}\bigg)^{n- 1}J(s,\theta). \nonumber 
\end{align}

Finally by the area formula, (\ref{0 diff between sets}) and $\mathcal{H}^{n- 1}(\mathcal{C}(\Pi))= 0$, we get
\begin{align*}
    V(\Sigma)\geq & V(\Sigma')\geq V(\Pi^{-1}(\Theta_{k}'))
    \geq \bigg(\frac{(k- 1)L}{s}\bigg)^{n- 1}\int_{\Theta_{k}'} J(s,\theta)\d \mathcal{H}^{{n- 1}}(\theta)\\
    &=\bigg(\frac{(k- 1)L}{s}\bigg)^{n- 1}\int_{\Theta_{k}} J(s,\theta)\d \mathcal{H}^{{n- 1}}(\theta)\\
&=\bigg(\frac{(k- 1)L}{s}\bigg)^{n- 1} V\Big((\partial B_p(s)\cap N_k)- \mathrm{Cut}(p)\Big).
\end{align*}
\end{proof}

\section{The sharp linear volume growth}\label{sec main proof}

\begin{theorem}\label{thm sharp volume growth} 
{For any complete non-compact Riemannian manifold $(M^3, g)$ with $Rc\geq 0$ and $R\geq 2$, 
\begin{enumerate}
\item[(1)].  if $(M^3, g)$ has only one end,  then $\displaystyle 0< \mathrm{V}_{M^3, 1}\leq 2\pi$,
\item[(2)]. if $(M^3,g)$ has at least two ends, then $(M^3,g)$ is isometric to a cylinder $S\times\mathbb{R}$ with $S$ being a closed surface of sectional curvature at least one, and $\displaystyle 0< \mathrm{V}_{M^3, 1}\leq 4\pi$. 
\item[(3)].  moreover $\mathrm{V}_{M^3, 1}= 4\pi$ if and only if $(M^3,g)$ is isometric to a cylinder $\mathbb{S}^2\times\mathbb{R}$. 
\end{enumerate}
}
\end{theorem}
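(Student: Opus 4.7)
The plan is to treat the one-ended and multi-ended cases separately, with part (1) carrying the bulk of the work via the cylindrical decomposition developed in Sections \ref{sec bubble hypersurfaces and diameter}--\ref{sec area compa}.

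For part (2): since $Rc \geq 0$ and $M^3$ has at least two ends, $M^3$ contains a line, so the Cheeger-Gromoll splitting theorem yields an isometric decomposition $M^3 = S \times \R$ for some complete surface $S$. The product identity $R_M = R_S$ combined with $R_M \geq 2$ forces $K_S \geq 1$; Bonnet-Myers then forces $S$ to be closed. A direct computation of $V(B_p(r))$ in $S \times \R$ gives $V_{M^3, 1} = V(S)$, and Gauss-Bonnet yields $V(S) \leq \int_S K_S = 2\pi \chi(S) \leq 4\pi$. Part (3) is then an equality analysis: the bound $V_{M^3, 1} \leq 2\pi$ from part (1) shows a one-ended $M^3$ cannot attain $4\pi$, so $V_{M^3, 1} = 4\pi$ forces at least two ends and the splitting above; equality in the Gauss-Bonnet chain then forces $K_S \equiv 1$ and $\chi(S) = 2$, whence $S = \S^2$.

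For part (1) in the simply connected, one-ended case, fix small $\varepsilon, \delta > 0$ and large $L$, and set $c = 2\pi/\sqrt{3(1 - 2\pi^2(1+\varepsilon)^2/L^2)}$. For $k \geq k_0(\varepsilon, \delta, L)$, Proposition \ref{prop volume of separating bubble surfaces} produces a separating $\mu$-bubble surface $\Sigma_{k-1} \subset N_{k-1}$ with $V(\Sigma_{k-1}) \leq 4\pi / (1 - 2\pi^2(1+\varepsilon)^2/L^2)$. Lemma \ref{lem surface area comparison} then yields, for $s > kL$,
\begin{align*}
V\bigl((\partial B_p(s) \cap N_k) - \mathrm{Cut}(p)\bigr) \leq \left(\frac{s}{(k-1)L}\right)^2 V(\Sigma_{k-1}),
\end{align*}
and by Lemma \ref{lem annulus is not too far from p}, $N_k \subseteq B_p((k+1)L + 2c + 10\delta L) - \overline{B_p(kL)}$. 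Combining these with the coarea formula gives
\begin{align*}
V(N_k) \leq (L + 2c + 10\delta L) \cdot \left(\frac{(k+2)L}{(k-1)L}\right)^2 \cdot \frac{4\pi}{1 - 2\pi^2(1+\varepsilon)^2/L^2}.
\end{align*}
Summing over $k_0 \leq k \leq K - 1$, setting $r = KL$, dividing by $\omega_1 r = 2KL$, and sending $K \to \infty$, $\delta \to 0$, and $L \to \infty$ (noting $c$ stays bounded so $c/L \to 0$), one obtains $V_{M^3, 1} \leq 2\pi$.

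For the non-simply connected one-ended case, pass to the universal cover $\tilde M$. Since the covering projection is distance-nonincreasing, $V_{M^3, 1} \leq V_{\tilde M, 1}$. If $\tilde M$ has one end, it is simply connected and one-ended, so the previous argument applies; if $\tilde M$ has at least two ends, Cheeger-Gromoll gives $\tilde M = N \times \R$ with $N$ closed and $K_N \geq 1$. The one-endedness of $M^3$ forces the deck group to contain an end-swapping isometry while its end-preserving subgroup acts trivially on the $\R$-factor (else $M^3$ would be compact), yielding $V_{M^3, 1} \leq V(N)/2 \leq 2\pi$. The lower bound $V_{M^3, 1} > 0$ in all cases follows from Yau's linear lower bound on volume growth for non-compact manifolds with $Rc \geq 0$. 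The main obstacle is the asymptotic sharpness: the $\mu$-bubble area bound must tend to $4\pi$, the height of $N_k$ to $L$, and the radial comparison factor to $1$ in precisely the right combined way so that $V(B_p(KL)) \sim 4\pi \cdot KL$, which after dividing by $\omega_1 KL = 2KL$ produces exactly the sharp constant $2\pi$.
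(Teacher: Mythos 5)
Your proposal is correct and follows essentially the same route as the paper: splitting off the multi-ended case via Cheeger--Gromoll, and for the simply connected one-ended case running the cylindrical decomposition into $N_k$, the separating $\mu$-bubble area bound (Proposition \ref{prop volume of separating bubble surfaces}), the height estimate (Lemma \ref{lem annulus is not too far from p}), the radial area comparison (Lemma \ref{lem surface area comparison}), and then the coarea/Ces\`aro computation with $\delta\to 0$, $L\to\infty$. Two minor deviations from the paper: you use Gauss--Bonnet and $K_S\geq 1$ to get $V(S)\leq 2\pi\chi(S)\leq 4\pi$ where the paper invokes Bishop--Gromov (both give the same bound and the same rigidity), and for the non-simply-connected one-ended case you carry out an explicit deck-group analysis (end-swapping isometry, finite $\Gamma$, $V_{M^3,1}\leq V(N)/2$), whereas the paper shortcuts this with the single observation that $\#\pi_1(M^3)\geq 2$ already forces $V_{M^3,1}\leq \tfrac{1}{2}V_{\tilde M^3,1}\leq\tfrac{1}{2}\cdot 4\pi$, which avoids the case split on whether $\tilde M$ has one or two ends.
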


\pf
{\textbf{Step (1)}. Note any complete manifold with nonnegative Ricci curvature has at least linear volume growth (see \cite{Yau-linear}). We know that $\displaystyle \mathrm{V}_{M^3, 1}> 0$ generally. 

If $(M^3, g)$ has more than one ends, then there is at least one geodesic line in $M^3$.
From Cheeger-Gromoll's splitting Theorem, we know that $M^3$ is isometric to $N^2\times \mathbb{R}$. Using $R\big|_M\geq 2$, we get that $R\big|_N\geq 2$. From Bonnet-Myers' Theorem and the Bishop-Gromov's volume comparison Theorem, we get that $N^2$ is compact and $V(N^2)\leq V(\mathbb{S}^2)= 4\pi$. The direct computation yields 
\begin{align}
\mathrm{V}_{M^3, 1}= \varlimsup\limits_{r\rightarrow\infty}\frac{V(B_p(r))}{2r}\leq \varlimsup\limits_{r\rightarrow\infty}\frac{V(N^2)\times (2r)}{2r}\leq 4\pi. \label{splitting ineq}
\end{align}

The rigidity part of (\ref{splitting ineq}) follows from the rigidity part of the Bishop-Gromov's volume comparison Theorem for $N^2$. 

\textbf{Step (2)}. In the rest argument, we assume that $(M^3, g)$ has only one end. We firstly consider the case $\pi_1(M^3)= 0$.

For any $\varepsilon\in (0, 1), \delta\in (0,  \frac{1}{100})$,  choose $L>  8\pi$.  Choose $k_0= k_0(\varepsilon, \delta,  L)\in \mathbb{Z}^+$,  such that for $k\geq k_0$,  the conclusions in Proposition \ref{prop volume of separating bubble surfaces} and Lemma \ref{lem annulus is not too far from p} both hold.   

In the rest argument, unless otherwise mentioned, all $k\geq k_0$.

Now from Proposition \ref{prop volume of separating bubble surfaces}, there is a compact smooth surface $\Sigma_k\subseteq N_k$ separating $\partial E_k$ from $\partial E_{k+ 1}$, and
\begin{align}
V(\Sigma_{k})\leq \frac{4\pi}{1- \frac{2\pi^2(1+\varepsilon)^2}{L^2}}. \label{upper bound of are of Sigma}
\end{align}

From Lemma \ref{lem surface area comparison} and (\ref{upper bound of are of Sigma}),  for any $r\in (kL,  (k+ 1+ 10\delta)\cdot L+ 2c)$,  we have
\begin{align}
V\Big((\partial B_p(r)\cap N_k)- \mathrm{Cut}(p)\Big)\leq \left(\frac{r}{(k- 1)L}\right)^2\cdot V(\Sigma_{k- 1})\leq \left(\frac{r}{(k- 1)L}\right)^2\cdot \frac{4\pi}{1- \frac{2\pi^2(1+\varepsilon)^2}{L^2}},\label{slice upper}
\end{align}
where $\displaystyle c= \frac{2\pi}{\sqrt{3(1- \frac{2\pi^2(1+ \varepsilon)^2}{L^2})}}$.

Note $V(\mathrm{Cut}(p))= 0$, where $V$ is the measure with respect to metric $g$.  Now using (\ref{slice upper}) and Lemma \ref{lem annulus is not too far from p},  we get
\begin{align}
V(N_k)&= V(N_k- \mathrm{Cut}(p))\leq \int_{kL}^{(k+ 1+ 10\delta)\cdot L+ 2c} V\Big((\partial B_p(r)\cap N_k)- \mathrm{Cut}(p)\Big) dr  \nonumber \\
&\leq  \Big(\frac{k^2+ (1+ 10\delta+ \frac{2c}{L})k+ 3^{-1}(1+ 10\delta+ \frac{2c}{L})^2}{(k- 1)^2}\Big)\cdot \frac{4\pi\cdot [(1+ 10\delta)L+ 2c]}{1- \frac{2\pi^2(1+\varepsilon)^2}{L^2}}. \nonumber 
\end{align}

Therefore for $r\in (kL, (k+ 1)L]$,  using $c\leq \frac{1}{4}L$,  there is 
\begin{align}
V(B_p(r))&\leq V(M- E_{k_0})+ \sum_{j= k_0}^{k}V(N_j) \nonumber \\
&\leq C+ \frac{4\pi\cdot [(1+ 10\delta)L+ 2c]}{1- \frac{2\pi^2(1+\varepsilon)^2}{L^2}}\cdot \sum_{j= k_0}^{k}\Big(\frac{j^2+ (1+ 10\delta + \frac{2c}{L})j+ 3^{-1}(1+ 10\delta+ 4^{-1})^2}{(j- 1)^2}\Big) \nonumber \\
& \leq C+ \frac{4\pi\cdot [(1+ 10\delta)L+ 2c]}{1- \frac{2\pi^2(1+\varepsilon)^2}{L^2}}\cdot  \sum_{j= k_0}^{k}\Big(\frac{j^2+ (1+ 10\delta+ \frac{2c}{L})j}{(j- 1)^2}\Big), \nonumber 
\end{align}
where $C> 0$ is some universal constant independent of $k$,  and $C$ may depend on $L$ and change from line to line.

Direct computation yields
\begin{align}
\varlimsup_{r\rightarrow\infty} \frac{V(B(r))}{r}\leq \varlimsup_{k\rightarrow\infty}\frac{C+ \frac{4\pi\cdot [(1+ 10\delta)L+ 2c]}{1- \frac{2\pi^2(1+\varepsilon)^2}{L^2}}\cdot \sum_{j= k_0}^{k}\Big(\frac{j^2+ (1+ 10\delta+ \frac{2c}{L})j}{(j- 1)^2}\Big)}{kL}\leq \frac{4\pi\cdot (1+ 10\delta+ \frac{2c}{L})}{1- \frac{2\pi^2(1+\varepsilon)^2}{L^2}} . \label{final ineq with L and delta} 
\end{align}

Note $\displaystyle \lim_{L\rightarrow\infty}\frac{c}{L}= 0$,  let $L\rightarrow\infty,  \delta\rightarrow 0$ in (\ref{final ineq with L and delta}), we get $\displaystyle  \mathrm{V}_{M^3, 1}\leq 2\pi$.

\textbf{Step (3)}. If the fundamental group $\pi_1(M^3)\neq \{1\}$, assume $(\tilde{M}^3, \tilde{g})$ is the universal cover of $(M^3, g)$. Because the number of elements in $\pi_1(M^3)$ satisfies $\#\pi_1(M^3)\geq 2$, we get $\displaystyle  \mathrm{V}_{M^3, 1}\leq  \frac{1}{2}\mathrm{V}_{\tilde{M}^3, 1}$.

Note $(\tilde{M}^3, \tilde{g})$ is simply connected with $Rc\geq 0, R\geq 2$. From Step (1) and Step (2), we get that $\displaystyle \mathrm{V}_{\tilde{M}^3, 1}\leq 4\pi$. Therefore we obtain that $\displaystyle  \mathrm{V}_{M^3, 1}\leq 2\pi$ in this case. 
}
\qed

In the end of this article,  we construct examples to show that only $\displaystyle  \mathrm{V}_{M^3, 1}=4\pi$  case yields the rigidity.

\begin{remark}\label{rem two end models}
{When $(M^3,g)$ has more than one ends, the model space is as in Figure \ref{figure: Splitting-3}. 

Let $(M^3, g)=\S^2(r_0)\times \mathbb{R}$, where $0<r_0<1$; then we have $V_{M^3,1}=4\pi r_0^2\in (0, 4\pi)$. On the other hand, let $E(a)$ be the ellipsoid 
\begin{align*}
    E(a)=\{(x,y,z)\in\R^3:\frac{x^2}{a^2}+y^2+z^2=r_0^2\},
\end{align*}
where $1<a\leq r_0^{-1}$. Let $\displaystyle b= \frac{V(E(a))}{V(\S^2(r_0))}>1$ and $N(a)=\frac{1}{\sqrt{b}}E(a)$. Then $Rc(N(a))\geq 0$, $R(N(a))\geq \frac{2b}{a^2r_0^2}\geq 2$, and $V(N(a))=4\pi r_0^2$. We also have $V_{M^3,1}=4\pi r_0^2$ where $(M^3, g)= N(a)\times \mathbb{R}$. 

Hence there is no rigidity for the case: $V_{M^3,1}\in (0, 4\pi)$ and $(M^3, g)$ has more than one ends.

\begin{figure}[H]
    \centering
    \includegraphics{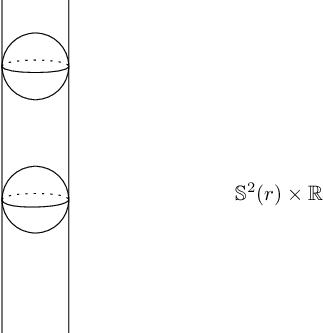}
    \caption{The model space with two ends}
    \label{figure: Splitting-3}
\end{figure}
}
\end{remark}

\begin{remark}\label{rem one end models}
{When $(M^3,g)$ has only one end, the model space is as in Figure \ref{figure: The equality model in Cohn-Vossen's inequality-half-cylinder-3}.  

We construct examples as follows, which show that there is no any rigidity result for any $\mathrm{V}_{M^3, 1}\in (0, 2\pi]$.

Let $(M, g)= (\R^3,  dr^2+ f(r)^2d\mathbb{S}^2)$, and $\theta$ is coordinate on $\mathbb{S}^2$.  Simple computations show that 
\begin{align*}
&Rc(\partial r, \partial r)= -2\frac{f''}{f}, \quad \quad \quad Rc(\partial\theta, \partial\theta)= \frac{1- (f')^2}{f^2}- \frac{f''}{f},\\
&R= 2\frac{1- (f')^2}{f^2}- 4\frac{f''}{f}. 
\end{align*}
We want to construct function $f: [0, \infty)\rightarrow \overline{\mathbb{R}^+}$ satisfies
\begin{equation}\label{Rc condi for f}
Rc\geq0,\quad\quad\quad R\geq2,\end{equation}
and
\begin{align}
f^{(\text{even})}(0)= 0, \quad \quad \quad f'(0)= 1.  \label{initial condition for f}
\end{align}

Direct computation yields
\begin{align}
\mathrm{V}_{M^3, 1}= \varlimsup_{r\rightarrow\infty}\frac{V(B_p(r))}{2r}= \varlimsup_{r\rightarrow\infty}\frac{1}{2r}\int_0^r f^2(s)\cdot 4\pi ds= 2\pi\varlimsup_{r\rightarrow\infty}\frac{\int_0^r f^2(s)ds}{r}. \nonumber 
\end{align}

Now, fix $0<b\leq 1$. 

\begin{enumerate}
\item[Example $(1)$].  Let 
$ f(r)=f_1(r)\vcentcolon=\frac{2b}{\pi}\arctan(\frac{\pi}{2b}r)$. Note $f_1$ is a smooth odd function on $\R$, so $f_1^{(\text{even})}(0)= 0$. Furthermore,
$$  f_1'(r)=\frac{1}{1+(\frac{\pi r}{2b})^2},\quad f_1''(r)=-\frac{\pi^2 r}{2b^2(1+(\frac{\pi r}{2b})^2)^2}\leq 0 \quad \text{when } r\geq 0.$$
Direct computation yields 
$$1-f_1'(r)^2-f_1(r)^2\geq 0.$$ Hence, $f_1$ satisfies (\ref{Rc condi for f}) and (\ref{initial condition for f}).
\item[Example $(2)$].   Let 
$$f(r)=f_2(r)\vcentcolon =b\tanh(\frac{r}{b}).$$  
Then
\begin{align*}
    f_2'(r)=\frac{4}{\left(e^{-r/b}+e^{r/b}\right)^2},\quad f_2''(r)=\frac{8 \left(e^{-r/b}-e^{r/b}\right)}{b \left(e^{-r/b}+e^{r/b}\right)^3}\leq 0\quad \text{when } r\geq 0.
\end{align*}
Furthermore,
\begin{align*}
    1-f_2'(r)^2-f_2(r)^2=\tanh^2(\frac{r}{b})\left(1-b^2+\frac{4}{\left(e^{-r/b}+e^{r/b}\right)^2}\right)\geq 0.
\end{align*}
Hence $f_2$ satisfies (\ref{Rc condi for f}) and (\ref{initial condition for f}).
\end{enumerate}

Now $f_1$ and $f_2$ are two different smooth functions satisfying (\ref{Rc condi for f}), and (\ref{initial condition for f}). Moreover, 
 \begin{align}
\varlimsup_{r\rightarrow\infty}\frac{\int_0^r f_i^2(s)ds}{r}= b, \quad i=1,2. \nonumber 
\end{align}

Therefore,  for noncompact Riemannian manifold $(M^3,g)$ with one end,  $\mathrm{V}_{M^3, 1}(M^3, g)= 2\pi b,\ (0<b\leq1)$ does not imply rigidity.
\begin{figure}[H]
    \centering
    \includegraphics{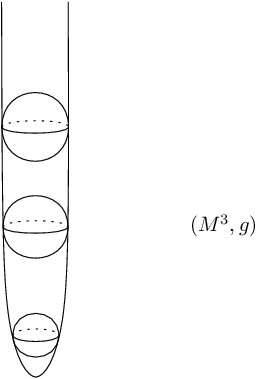}
    \caption{The model space with one end}
    \label{figure: The equality model in Cohn-Vossen's inequality-half-cylinder-3}
\end{figure}
}
\end{remark}

\section*{Acknowledgments}
We are indebted to Otis Chodosh for insightful comments.  We are grateful to Jianqiao Shang for carefully reading the earlier version of the paper and pointing out some gaps.  We particularly thank Jiaping Wang for his suggestion on the paper.  

\appendix

\section{The existence of smooth functions for $\mu$-bubbles}\label{sec appe exist}

\begin{lemma}\label{lem existence of bump function}
    For any closed subset $K\subsetneq (M^3,g)$, there is $f\in C^\infty(M^3)$ such that 
    \begin{align*}
        f|_{K}=0,\quad f|_{K^c}>0.
    \end{align*}
\end{lemma}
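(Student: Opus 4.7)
My plan is to produce $f$ as a convergent infinite sum of smooth bump functions supported in the open set $U \vcentcolon= M^3 \setminus K$. Since $M^3$ is a smooth manifold (hence second countable and paracompact) and $U$ is open, I can cover $U$ by a countable family of precompact coordinate balls $\{B_i\}_{i\in\mathbb{N}}$ with $\overline{B_i}\subset U$ and $\bigcup_i B_i = U$. For each $B_i$, standard Euclidean bump function constructions (composed with the chart) give a smooth function $\psi_i\in C^\infty(M^3)$ with $\psi_i>0$ on $B_i$, $\psi_i\equiv 0$ on $M^3\setminus B_i$, and in particular $\psi_i\equiv 0$ on $K$.

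The function I would take is
\begin{align*}
f(x) \vcentcolon= \sum_{i=1}^\infty c_i\,\psi_i(x),
\end{align*}
where the positive weights $c_i$ are chosen small enough to guarantee $C^\infty$-convergence. Concretely, fix an exhaustion of $M^3$ by compact sets $K_1\subset K_2\subset\cdots$, and for each $i$ set
\begin{align*}
c_i \vcentcolon= \frac{2^{-i}}{1 + \max_{0\le k\le i}\sup_{K_i}\,\|\nabla^k \psi_i\|_g},
\end{align*}
so that on any fixed compact set $K_N$ and for any fixed order of derivatives $k$, all but finitely many terms are bounded by $2^{-i}$. This makes the series converge uniformly on compact sets together with all derivatives, so $f\in C^\infty(M^3)$.

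It then remains to check the two pointwise properties. For any $x\in K$ each $\psi_i(x)=0$, so $f(x)=0$. For any $x\in U=K^c$, by construction there exists some $i_0$ with $x\in B_{i_0}$, and then $c_{i_0}\psi_{i_0}(x)>0$ while every other term is $\ge 0$, so $f(x)>0$.

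The only substantive issue is the smoothness of the sum, and this is handled by the standard weight choice above; everything else is bookkeeping. Note that the assumption $K\subsetneq M^3$ is used only to ensure $U\neq\varnothing$ (otherwise the countable cover is empty and one would simply take $f\equiv 0$, which vacuously satisfies the conclusion as well).
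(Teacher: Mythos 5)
Your proof is correct and follows essentially the same strategy as the paper's: cover $K^c$ by countably many balls, build a bump function on each, and sum them with weights shrinking fast enough to force $C^\infty$-convergence. The only cosmetic differences are that the paper uses small geodesic balls with $f_i(x)=e^{-1/(r_i^2-d(p_i,x)^2)}$ and a single global derivative bound $M_i$ (finite since each $f_i$ is compactly supported), whereas you use coordinate balls with pulled-back Euclidean bumps and an exhaustion $\{K_i\}$; both are valid and interchangeable.
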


\begin{proof}
    Let $\{B_{p_i}(r_i)\}_{i=1}^\infty$ be a countable open cover of $K^c$ consisting of open balls, such that $\rho_i(x):=d(p_i,x)^2$ is smooth on each $B_{p_i}(r_i)$. Let 
    \begin{align*}
        f_i(x)=e^{-\frac{1}{r_i^2-\rho_i(x)}}1_{B_{p_i}(r_i)}
    \end{align*}
    be a smooth function which is positive on $B_{p_i}(r_i)$ and vanishes outside $B_{p_i}(r_i)$. For each $i$, define 
    \begin{align*}
        M_i=1+\sup\left\{\left|D^\alpha f_i(x)\right|:x\in M, \alpha=(\alpha_1,\alpha_2,\alpha_3)\in\mathbb{N}^3, |\alpha|=\alpha_1+\alpha_2+\alpha_3\leq i\right\}.
    \end{align*}
    We have $M_i<\infty$, since all the derivatives of $f_i$ vanishes outside a compact set.

    Let
    \begin{align*}
        f=\sum_{i=1}^\infty\frac{f_i}{2^iM_i}.
    \end{align*}
    Then for each multi-index $\alpha$, $|D^\alpha\frac{f_i}{2^iM_i}|\leq \frac{1}{2^i}$ when $i\geq|\alpha|$. So the series $\sum_{i=1}^\infty D^\alpha\frac{f_i}{2^iM_i}$ converges uniformly, which implies the smoothness of $f$. From the definition of $f$, we also have 
    \begin{align*}
        f|_{K}=0,\quad f|_{K^c}>0.
    \end{align*}
\end{proof}

\begin{lemma}\label{lem function for Caccioppoli set}
{For any $p\in (M^3, g)$ and $k\in \mathbb{Z}^+, L> 0$, there is $f\in C^\infty(M^3)$ with 
\begin{align}
f\big|_{\partial E_k}\leq kL, \quad f\big|_{\partial E_{k+ 1}}> (k+ \frac{1}{2})L, \quad  \text{and} \ (k+ \frac{1}{2})L\ \text{is a regular value of} \ f. \nonumber 
\end{align}
}
\end{lemma}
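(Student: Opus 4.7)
The plan is to produce $f$ as a small smooth perturbation of the distance function $\rho(x):=d(p,x)$, then invoke Sard's theorem to secure the regular value condition. Since $\partial E_j\subseteq\partial B_p(jL)$ for every $j\in\mathbb{Z}^+$, we have $\rho|_{\partial E_k}=kL$ and $\rho|_{\partial E_{k+1}}=(k+1)L$, so it suffices to approximate $\rho$ uniformly well and then translate by a suitable constant.

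First I would construct $\tilde\rho\in C^\infty(M^3)$ with $\sup_{M^3}|\tilde\rho-\rho|<L/8$ by a standard partition-of-unity mollification: choose a locally finite cover $\{U_i\}$ of $M^3$ by Euclidean coordinate charts together with a precompact refinement $V_i\Subset U_i$ and a smooth partition of unity $\{\eta_i\}$ subordinate to $\{V_i\}$; in each chart mollify the continuous function $\rho$ to obtain a smooth $\rho_i$ with $|\rho_i-\rho|<L/16$ on $V_i$; and set $\tilde\rho:=\sum_i\eta_i\rho_i$. This sum is locally finite and hence smooth, and $|\tilde\rho-\rho|\leq\sum_i\eta_i|\rho_i-\rho|<L/8$ follows from $\sum_i\eta_i\equiv 1$.

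Next I would introduce the one-parameter family $f_t:=\tilde\rho-\tfrac{L}{8}-t$ for $t\in[0,L/8]$. The $C^0$ bound on $\tilde\rho-\rho$ directly gives $f_t|_{\partial E_k}<kL-t\leq kL$ and $f_t|_{\partial E_{k+1}}>(k+1)L-\tfrac{L}{4}-t\geq kL+\tfrac{5L}{8}>(k+\tfrac{1}{2})L$, so the first two required conditions hold uniformly in $t\in[0,L/8]$.

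Finally, $(k+\tfrac{1}{2})L$ is a regular value of $f_t$ if and only if $(k+\tfrac{5}{8})L+t$ is a regular value of $\tilde\rho$, since $df_t=d\tilde\rho$. By Sard's theorem the set of critical values of the smooth function $\tilde\rho$ has Lebesgue measure zero in $\mathbb{R}$, hence the set of $t\in[0,L/8]$ for which $(k+\tfrac{5}{8})L+t$ is a critical value of $\tilde\rho$ also has measure zero; selecting any $t$ outside this null set and setting $f:=f_t$ produces the desired function. The only non-routine ingredient is the global smoothing in the second step, but as this reduces to standard mollification in Euclidean coordinate charts glued by a partition of unity, I expect no serious obstacle.
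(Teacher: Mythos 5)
Your proof is correct, but it takes a genuinely different route from the paper's. The paper invokes its Lemma \ref{lem existence of bump function} to build a smooth $g\geq 0$ that vanishes exactly on $\partial E_k$ and is strictly positive off it; since $\partial E_{k+1}$ is compact, $c:=\inf_{\partial E_{k+1}}g>0$, and by Sard one picks a regular value $c'\in(0,c)$ of $g$ and sets $f=\frac{(k+1/2)L}{c'}g$, so that the level $(k+1/2)L$ of $f$ coincides with the regular level $c'$ of $g$. You instead smooth the distance function $\rho(\cdot)=d(p,\cdot)$ to $\tilde\rho$ with a uniform $C^0$ error $<L/8$ via a partition-of-unity mollification, observe that for every shift $t\in[0,L/8]$ the function $f_t=\tilde\rho-L/8-t$ satisfies the two boundary inequalities (using $\rho|_{\partial E_k}=kL$, $\rho|_{\partial E_{k+1}}=(k+1)L$ and the margin $5L/8>L/2$), and apply Sard to $\tilde\rho$ to choose a generic shift $t$ making $(k+5/8)L+t$ a regular value of $\tilde\rho$, equivalently $(k+1/2)L$ a regular value of $f_t$. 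Both arguments hinge on Sard; the paper's rescaling trick fixes the target level by multiplication while yours fixes it by translation. Your approach is self-contained and avoids the bump-function Lemma \ref{lem existence of bump function} entirely, whereas the paper's construction reuses that lemma, which it also needs for Lemma \ref{lem existence of smooth function}, so there the extra machinery is amortized. A minor point of bookkeeping in your argument: you correctly use that $\eta_i$ is supported in $V_i$, where $\rho_i$ is defined and $|\rho_i-\rho|<L/16$, so the pointwise estimate $|\tilde\rho-\rho|=|\sum_i\eta_i(\rho_i-\rho)|<L/16<L/8$ is indeed uniform on the non-compact $M^3$; this is exactly the standard Whitney approximation, and there is no gap.
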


\begin{proof}
    From Lemma \ref{lem existence of bump function} there exists a smooth function $g$ on $M$ such that $g|_{\partial E_k}=0$ and $g|_{(\partial E_k)^c}>0$. Because $\partial E_{k+1}\subset \partial B_p((k+1)L)$ is compact, let $\displaystyle c:=\inf_{x\in\partial E_{k+1}} g(x)>0$. From Sard's theorem, $g$ has a regular value $c'\in (0,c)$. Then $f(x)=\frac{(k+1/2)L}{c'}g(x)$ is the desired smooth function.
\end{proof}

\begin{lemma}\label{lem existence of smooth function}
For any $\varepsilon>0$, there is a continuous function $\rho_k(x):\overline{N_k}\to [-L/2,L/2]$,  where $N_k= E_k- \overline{E_{k+ 1}}$, such that $\rho_k$ is smooth on $N_k$, $\rho_k(N_k)\subset(-L/2,L/2)$ and
\begin{align*}
    \rho_k|_{\partial E_k} = -L/2, \quad \rho_k|_{\partial E_{k+1}} = L/2, \quad |\nabla_M \rho_k|<(1+\varepsilon)^2.
\end{align*}
\end{lemma}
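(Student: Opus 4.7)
The plan is to realize $\rho_k$ as a smooth quotient
\begin{equation*}
\rho_k(x)=\frac{L}{2}\cdot \frac{f_1(x)-f_2(x)}{f_1(x)+f_2(x)},
\end{equation*}
where $f_1,f_2\in C^\infty(M^3)$ are carefully chosen smooth approximations of $d(\cdot,\partial E_k)$ and $d(\cdot,\partial E_{k+1})$, each vanishing precisely on the corresponding boundary component. In this form the boundary values, the strict interior inclusion, and the gradient bound will all follow from clean pointwise estimates on $f_1,f_2$.

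The essential geometric input is a triangle-inequality lower bound. Since $E_j$ is the unbounded component of $M^3-\overline{B_p(jL)}$, one checks that $\partial E_k\subseteq \partial B_p(kL)$ and $\partial E_{k+1}\subseteq \partial B_p((k+1)L)$; consequently, for any $y\in\partial E_k$ and $z\in\partial E_{k+1}$ the triangle inequality gives $d(y,z)\geq d(p,z)-d(p,y)=L$, and taking infima yields
\begin{equation*}
d(x,\partial E_k)+d(x,\partial E_{k+1})\geq L\quad\text{for all } x\in\overline{N_k}.
\end{equation*}

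Fix $\eta\in(0,\varepsilon)$ small enough that $(1+\eta)^2<(1+\varepsilon)^2$. The next step is to construct $f_1,f_2\in C^\infty(M^3)$ satisfying: $f_i>0$ on $M^3-\partial E_{k+\delta_i}$ and $f_i\equiv 0$ on $\partial E_{k+\delta_i}$ (with $\delta_1=0,\ \delta_2=1$); $|\nabla f_i|<1+\eta$ pointwise; and $f_1+f_2>L/(1+\eta)$ on $\overline{N_k}$. I would do this by first applying a Greene-Wu-type smoothing to the Lipschitz function $d(\cdot,\partial E_{k+\delta_i})$ to produce a smooth $\tilde f_i$ with $\|\tilde f_i-d(\cdot,\partial E_{k+\delta_i})\|_{C^0}<\eta^2$ and $|\nabla \tilde f_i|<1+\eta/2$, and then correcting $\tilde f_i$ in a thin collar of $\partial E_{k+\delta_i}$ by subtracting a small smooth function built from Lemma~\ref{lem existence of bump function}. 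The correction can be made $O(\eta^2)$ in $C^0$ and $O(\eta)$ in $C^1$, preserving the gradient bound, and the lower bound $f_1+f_2>L/(1+\eta)$ follows from $C^0$-closeness together with the triangle-inequality estimate above.

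The remaining verification is direct: by construction $\rho_k|_{\partial E_k}=-L/2$, $\rho_k|_{\partial E_{k+1}}=L/2$, and $\rho_k(N_k)\subset(-L/2,L/2)$; a short calculation gives
\begin{equation*}
\nabla\rho_k=\frac{L(f_2\nabla f_1-f_1\nabla f_2)}{(f_1+f_2)^2},
\end{equation*}
which together with the estimates on $f_i$ yields $|\nabla\rho_k|\leq L(1+\eta)/(f_1+f_2)\leq(1+\eta)^2<(1+\varepsilon)^2$.

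\textbf{Main obstacle.} The technical heart is simultaneously achieving the prescribed zero sets and the pointwise gradient bound close to $1$ on $f_1,f_2$. Greene-Wu smoothing controls the gradient but perturbs the zero set, while Lemma~\ref{lem existence of bump function} supplies the zero set but gives no useful control on the gradient. Localizing the bump-function correction to a thin collar where the distance function is already small is what allows both estimates to coexist without spoiling either.
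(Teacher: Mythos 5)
Your overall architecture --- realizing $\rho_k$ as $\frac{L}{2}\cdot\frac{f_1-f_2}{f_1+f_2}$, establishing $d(\cdot,\partial E_k)+d(\cdot,\partial E_{k+1})\geq L$ via the triangle inequality, and deriving $|\nabla\rho_k|\leq L(1+\eta)/(f_1+f_2)\leq (1+\eta)^2$ from the quotient rule --- is correct as written and is genuinely different from the paper's route. The paper instead defines a Lipschitz profile $\tilde\rho_k$ as a piecewise-linear function of $d(p,\cdot)$ that is \emph{constant equal to $\mp L/2$ in collars of width $\delta L$ around $\partial E_k$ and $\partial E_{k+1}$}, mollifies it by an exponential-map convolution of radius $\alpha<\delta L$, and then adds small bump-function corrections; because the profile is locally constant near the boundaries, mollification preserves the boundary values \emph{exactly} with no further work.

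The gap in your proposal is precisely the step you flag as the ``technical heart,'' and I do not think the argument you sketch closes it. You need, for each $i$, a \emph{single} smooth function $f_i$ that (a) vanishes exactly on $\partial E_{k+\delta_i}$, (b) is positive on $N_k$, (c) satisfies $|\nabla f_i|<1+\eta$, and (d) is $C^0$-close to $d(\cdot,\partial E_{k+\delta_i})$. Greene--Wu smoothing of the distance function gives (c)--(d) but destroys (a): the smoothed $\tilde f_i$ is merely within $\eta^2$ of zero on $\partial E_{k+\delta_i}$, and can be negative there. You then propose to restore (a) by ``subtracting a small smooth function built from Lemma~\ref{lem existence of bump function},'' but that lemma only produces a nonnegative function with a prescribed zero set and provides \emph{no} control whatsoever on its gradient near the zero set --- it is constructed as a rapidly decaying series of bumps, and its gradient near $K$ can be arbitrarily large relative to its $C^0$ size. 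More fundamentally, $\partial E_{k+\delta_i}$ is a closed subset of a geodesic sphere and need not be a smooth (or even rectifiable) hypersurface, so there is no collar structure to exploit; the assertion that the correction can be made $O(\eta^2)$ in $C^0$ and $O(\eta)$ in $C^1$ on a thin neighborhood of an arbitrary closed set is not justified and, absent further ideas, I don't see how to prove it. (Note also that the correction must be \emph{signed} --- $\tilde f_i$ may undershoot zero --- so one cannot simply subtract the bump from Lemma~\ref{lem existence of bump function}.) If you flatten $f_i$ to be identically zero in a collar of $\partial E_{k+\delta_i}$ to avoid this problem, you lose the strict inclusion $\rho_k(N_k)\subset(-L/2,L/2)$ and must reintroduce a bump-function fix, at which point you have essentially reproduced the paper's mechanism. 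Spelling out a construction of $f_i$ satisfying (a)--(d) simultaneously is what the proof still requires.
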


\begin{proof}
    Let $\delta>0$ such that $\frac{1}{1-2\delta}<1+\frac{\varepsilon}{2}$, and define $\tilde\rho_k$ on $\overline{N_k}\cup\Big(\overline{B_p((k+1+\delta)L)} - B_p((k-\delta)L)\Big)$ by
\begin{align*}
    \tilde\rho_k(x)=\left\{\begin{array}{ll}
     -\frac{L}{2}, & x\in \overline{B_p((k+\delta)L)}\\
     \frac{L}{2}, & x\in B_p((k+1-\delta)L)^c\\
     \frac{1}{1-2\delta}\big(d(p,x)-(k+\frac{1}{2}) L\big), & x\in \overline{B_p((k+1-\delta)L)}-B_p((k+\delta)L)
\end{array}\right..
\end{align*}
$\tilde\rho_k$ is a Lipschitz function with Lipschitz constant $\frac{1}{1-2\delta}$. Let $B_0^3(1)$ be the Euclidean ball in $\R^3$, $\eta(v)=Ce^{-\frac{1}{1-|v|^2}}1_{B_0^3(1)}$ be a smooth function on $\R^3$ and $\int_{\R^3}\eta(v)\d v=1$. Let $\alpha>0$ be sufficiently small such that $\exp_x$ is a diffeomorphism on $\{v\in T_x M:|v|<3\alpha\}$ for all $x\in \overline{B_p((k+1+\delta)L)}- B_p((k-\delta)L)$.
Then we define
\begin{align*}
    \rho_{k,\alpha}(x)=\frac{1}{\alpha^3}\int_{T_xM}\eta\Big(\frac{v}{\alpha}\Big)\tilde\rho_k(\exp_x (v))\d v.
\end{align*}
The integration is taken with respect to the Lebesgue measure on $T_xM=\R^3$

We claim that $\rho_{k,\alpha}(x)$ is smooth. Fix $x\in M$. For any smooth curve $\gamma:(-\alpha,\alpha)\to M$ with $\gamma(0)=x$, $\gamma$ can be written as $\gamma(t)=\exp_x(\tilde\gamma(t))$, where $\tilde\gamma$ is a smooth curve in $T_xM$. Let $\varphi_t(v)=\d\exp_x(\tilde\gamma(t))(v)$ be the linear isomorphism between $T_xM$ and $T_{\gamma(t)}M$. Then 
\begin{align*}
    \rho_{k,\alpha}(\gamma(t))=&\frac{1}{\alpha^3}\int_{T_{\gamma(t)}M}\eta\Big(\frac{v}{\alpha}\Big)\tilde\rho_k(\exp_{\gamma(t)} (v))\d v\\
    =&\frac{1}{\alpha^3}\int_{T_xM}\eta\Big(\frac{\varphi_t(v)}{\alpha}\Big)\tilde\rho_k(\exp_{\gamma(t)} (\varphi_t(v)))J_{\varphi_t}(v)\d v.
\end{align*}
Since $\exp_x(\tilde\gamma(t)+sv)$ is smooth about $(s,t)$ in a small neighborhood of $(0,0)$, $\varphi_t(v)=\d\exp_x(\tilde\gamma(t))(v)=\frac{\d}{\d s}|_{s=0}\exp_x(\tilde\gamma(t)+sv)$ is also smooth about $t$ in a small neighborhood of $0$. So $\eta\Big(\frac{\varphi_t(v)}{\alpha}\Big)$, $\exp_{\gamma(t)} (\varphi_t(v))$ and $J_{\varphi_t}(v)$ are smooth functions of $t$ near $0$. Then $\rho_{k,\alpha}(\gamma(t))$ is smooth about $t$ near $0$.  Hence $\rho_{k,\alpha}$ is smooth at $x$. 

We further assume $\exp_x(B_0^3(\alpha))\subset\overline{B_p((k+1+\delta)L)}- B_p((k+1-\delta)L)$ for all $x\in\partial B_p((k+1)L)$, and $\exp_x(B_0^3(\alpha))\subset\overline{B_p((k+\delta)L)}- B_p((k-\delta)L)$ for all $x\in\partial B_p(kL)$ by taking $\alpha$ small. When $x\in\partial B_p((k+1)L)$,
\begin{align*}
    \rho_{k,\alpha}(x)=\frac{1}{\alpha^3}\int_{T_xM}\eta\Big(\frac{v}{\alpha}\Big)\frac{L}{2}\d v=\frac{L}{2}.
\end{align*}
When $x\in\partial B_p(kL)$,
\begin{align*}
    \rho_{k,\alpha}(x)=\frac{1}{\alpha^3}\int_{T_xM}\eta\Big(\frac{v}{\alpha}\Big)\Big(-\frac{L}{2}\Big)\d v=-\frac{L}{2}.
\end{align*}
Moreover $-L/2\leq \rho_{k,\alpha}(x)\leq L/2$ on $N_k$.

Since $\rho_{k,\alpha}(x)$ converges to $\tilde\rho_k$ uniformly on $\overline{B_p((k+1+\delta)L)}- B_p((k-\delta)L)$, there is $\alpha_0$ such that the Lipschitz constants of  $\rho_{k,\alpha_0}$ and $\tilde\rho_k$ satisfy $\mathrm{Lip}(\rho_{k,\alpha_0})<\mathrm{Lip}(\tilde\rho_k)+\frac{\varepsilon}{2}<1+\varepsilon$.

Let $U=\{x\in N_k:\rho_{k,\alpha_0}(x)<0\}$, $V=\{x\in N_k:\rho_{k,\alpha_0}(x)>0\}$ be two open sets. From Lemma \ref{lem existence of bump function} there exists smooth bump functions $f$, $g$ on $M$ such that
\begin{align*}
    f|_{U}>0,\quad f|_{U^c}=0, \quad g|_{V}>0,\quad g|_{V^c}=0.
\end{align*}
And we may assume $f<L/4$, $g<L/4$, $|\nabla_M f|<\varepsilon/2$, $|\nabla_M g|<\varepsilon/2$ after a scaling.

Let
\begin{align*}
    \rho_k=\rho_{k,\alpha_0}|_{\overline{N_k}}+ f|_{\overline{N_k}}-g|_{\overline{N_k}}
\end{align*}
then $\rho_k$ is smooth, $\rho_k|_{\partial E_k} = -\frac{L}{2}$, $\rho_k|_{\partial E_{k+1}} = \frac{L}{2}$, $\rho_k(N_k)\subset(-L/2,L/2)$ and 
\begin{align*}
    |\nabla_M \rho_k|\leq\mathrm{Lip}(\rho_{k,\alpha_0})+|\nabla_M f|+|\nabla_M g|<(1+\varepsilon)^2.
\end{align*}
\end{proof}

\begin{bibdiv}
\begin{biblist}

\bib{AC}{article}{
    AUTHOR = {Anderson, Michael T.},
    author= {Cheeger, Jeff},
     TITLE = {{$C^\alpha$}-compactness for manifolds with {R}icci curvature
              and injectivity radius bounded below},
   JOURNAL = {J. Differential Geom.},
  FJOURNAL = {Journal of Differential Geometry},
    VOLUME = {35},
      YEAR = {1992},
    NUMBER = {2},
     PAGES = {265--281},
      ISSN = {0022-040X,1945-743X},
   MRCLASS = {53C23 (53C21)},
  MRNUMBER = {1158336},
MRREVIEWER = {Xiao\ Wei\ Peng},
       URL = {http://projecteuclid.org/euclid.jdg/1214448075},
}

\bib{CC-Ann}{article}{
    AUTHOR = {Cheeger, Jeff},
    author= {Colding, Tobias H.},
     TITLE = {Lower bounds on {R}icci curvature and the almost rigidity of
              warped products},
   JOURNAL = {Ann. of Math. (2)},
  FJOURNAL = {Annals of Mathematics. Second Series},
    VOLUME = {144},
      YEAR = {1996},
    NUMBER = {1},
     PAGES = {189--237},
      ISSN = {0003-486X},
     CODEN = {ANMAAH},
   MRCLASS = {53C21 (53C20 53C23)},
  MRNUMBER = {1405949 (97h:53038)},
MRREVIEWER = {Joseph E. Borzellino},
       DOI = {10.2307/2118589},
       URL = {http://dx.doi.org/10.2307/2118589},
}

\bib{CC-I}{article}{
    author={Cheeger, Jeff},
    author={Colding, Tobias H.},
    title={On the structure of spaces with Ricci curvature bounded below. I},
    journal={J. Differential Geom.},
    volume={46},
    date={1997},
    number={3},
    pages={406--480, MR1484888, Zbl 0902.53034},
}

\bib{CL}{article}{
  			AUTHOR = {Chodosh,  Otis},
  			AUTHOR = {Li, Chao },
  			TITLE = {Generalized soap bubbles and the topology of manifolds with positive scalar curvature},
  			JOURNAL = {to appear in Ann. Math, arXiv:2008.11888v4 [math.DG]},
  			URL = {https://doi.org/10.48550/arXiv.2008.11888
},
  		}  	
  		
  		\bib{CLS-0}{article}{
  			AUTHOR = {Chodosh,  Otis},
  			AUTHOR = {Li, Chao },
  			AUTHOR = {Stryker, Douglas},
  			TITLE = {Complete stable minimal hypersurfaces in positively curved 4-manifolds},
  			JOURNAL = {arXiv:2202.07708v1 [math.DG]},
  			URL = {https://doi.org/10.48550/arXiv.2202.07708
},
  		}			  			
  			 
  		\bib{CLS}{article}{
    AUTHOR = {Chodosh,  Otis},
  			AUTHOR = {Li, Chao },
  			AUTHOR = {Stryker, Douglas},
     TITLE = {Volume growth of 3-manifolds with scalar curvature lower
              bounds},
   JOURNAL = {Proc. Amer. Math. Soc.},
  FJOURNAL = {Proceedings of the American Mathematical Society},
    VOLUME = {151},
      YEAR = {2023},
    NUMBER = {10},
     PAGES = {4501--4511},
      ISSN = {0002-9939,1088-6826},
   MRCLASS = {53C21 (53C42)},
  MRNUMBER = {4643334},
       DOI = {10.1090/proc/16521},
       URL = {https://tlink.lib.tsinghua.edu.cn:443/https/443/org/doi/yitlink/10.1090/proc/16521},
}		  		

\bib{Gromov-large-mflds}{incollection} {
    AUTHOR = {Gromov, M.},
     TITLE = {Large {R}iemannian manifolds},
 BOOKTITLE = {Curvature and topology of {R}iemannian manifolds ({K}atata,
              1985)},
    SERIES = {Lecture Notes in Math.},
    VOLUME = {1201},
     PAGES = {108--121},
 PUBLISHER = {Springer, Berlin},
      YEAR = {1986},
      ISBN = {3-540-16770-6},
   MRCLASS = {53C20 (53C21)},
  MRNUMBER = {859578},
MRREVIEWER = {Gudlaugur\ Thorbergsson},
       DOI = {10.1007/BFb0075649},
       URL = {https://doi-org-s.qh.yitlink.com:8444/10.1007/BFb0075649},
}

\bib{Gromov-Macro}{incollection}{
    AUTHOR = {Gromov, M.},
     TITLE = {Positive curvature, macroscopic dimension, spectral gaps and
              higher signatures},
 BOOKTITLE = {Functional analysis on the eve of the 21st century, {V}ol.
              {II} ({N}ew {B}runswick, {NJ}, 1993)},
    SERIES = {Progr. Math.},
    VOLUME = {132},
     PAGES = {1--213},
 PUBLISHER = {Birkh\"{a}user Boston, Boston, MA},
      YEAR = {1996},
      ISBN = {0-8176-3855-5},
   MRCLASS = {53C21 (53C20 57R20)},
  MRNUMBER = {1389019},
MRREVIEWER = {Christopher\ W.\ Stark},
       DOI = {10.1007/s10107-010-0354-x},
       URL = {https://doi.org/10.1007/s10107-010-0354-x},
}

\bib{Gromov-MI}{article}{
    AUTHOR = {Gromov, Misha},
     TITLE = {Metric inequalities with scalar curvature},
   JOURNAL = {Geom. Funct. Anal.},
  FJOURNAL = {Geometric and Functional Analysis},
    VOLUME = {28},
      YEAR = {2018},
    NUMBER = {3},
     PAGES = {645--726},
      ISSN = {1016-443X,1420-8970},
   MRCLASS = {53C21 (53C20 53C24 58J20)},
  MRNUMBER = {3816521},
MRREVIEWER = {David\ J.\ Wraith},
       DOI = {10.1007/s00039-018-0453-z},
       URL = {https://doi.org/10.1007/s00039-018-0453-z},
}

\bib{HK}{article}{
    AUTHOR = {Heintze, Ernst},
    author = {Karcher, Hermann},
     TITLE = {A general comparison theorem with applications to volume
              estimates for submanifolds},
   JOURNAL = {Ann. Sci. \'{E}cole Norm. Sup. (4)},
  FJOURNAL = {Annales Scientifiques de l'\'{E}cole Normale Sup\'{e}rieure.
              Quatri\`eme S\'{e}rie},
    VOLUME = {11},
      YEAR = {1978},
    NUMBER = {4},
     PAGES = {451--470},
      ISSN = {0012-9593},
   MRCLASS = {53C40 (58E10)},
  MRNUMBER = {533065},
MRREVIEWER = {Hubert\ Gollek},
       URL = {http://www.numdam.org/item?id=ASENS_1978_4_11_4_451_0},
}

\bib{LZ}{article}{
    AUTHOR = {Liokumovich, Yevgeny},
    author= {Zhou, Xin},
     TITLE = {Sweeping out 3-manifold of positive {R}icci curvature by short
              1-cycles via estimates of min-max surfaces},
   JOURNAL = {Int. Math. Res. Not. IMRN},
  FJOURNAL = {International Mathematics Research Notices. IMRN},
      YEAR = {2018},
    NUMBER = {4},
     PAGES = {1129--1152},
      ISSN = {1073-7928,1687-0247},
   MRCLASS = {53C20 (53C12)},
  MRNUMBER = {3801457},
MRREVIEWER = {Curtis\ Pro},
       DOI = {10.1093/imrn/rnw264},
       URL = {https://doi.org/10.1093/imrn/rnw264},
}

\bib{MW-II}{article}{
				AUTHOR = {Munteanu, Ovidiu},
				author= {Wang, Jiaping},
				TITLE = {Geometry of three-dimensional manifolds with scalar curvature lower bound},
				JOURNAL = { arXiv:2201.05595v2 [math.DG},
				URL = { 
https://doi.org/10.48550/arXiv.2201.05595},
			}	
			
\bib{SY}{article}{
    AUTHOR = {Shi, Wan-Xiong},
    author= {Yau, S.-T.},
     TITLE = {A note on the total curvature of a {K}\"{a}hler manifold},
   JOURNAL = {Math. Res. Lett.},
  FJOURNAL = {Mathematical Research Letters},
    VOLUME = {3},
      YEAR = {1996},
    NUMBER = {1},
     PAGES = {123--132},
      ISSN = {1073-2780},
   MRCLASS = {53C55 (53C21)},
  MRNUMBER = {1393389},
MRREVIEWER = {Man\ Chun\ Leung},
       DOI = {10.4310/MRL.1996.v3.n1.a12},
       URL = {https://doi.org/10.4310/MRL.1996.v3.n1.a12},
}	

\bib{SY-CMP}{article}{
    AUTHOR = {Schoen, Richard},
    author= {Yau, S. T.},
     TITLE = {The existence of a black hole due to condensation of matter},
   JOURNAL = {Comm. Math. Phys.},
  FJOURNAL = {Communications in Mathematical Physics},
    VOLUME = {90},
      YEAR = {1983},
    NUMBER = {4},
     PAGES = {575--579},
      ISSN = {0010-3616,1432-0916},
   MRCLASS = {83C05 (53C80 58G25)},
  MRNUMBER = {719436},
MRREVIEWER = {J.\ L.\ Kazdan},
       URL = {http://projecteuclid.org/euclid.cmp/1103940419},
}	

\bib{SY-PSC}{incollection}{ 
   author = {Schoen, Richard},
    author= {Yau, S. T.},
     TITLE = {Positive scalar curvature and minimal hypersurface
              singularities},
 BOOKTITLE = {Surveys in differential geometry 2019. {D}ifferential
              geometry, {C}alabi-{Y}au theory, and general relativity.
              {P}art 2},
    SERIES = {Surv. Differ. Geom.},
    VOLUME = {24},
     PAGES = {441--480},
 PUBLISHER = {Int. Press, Boston, MA},
      YEAR = {2022},
      ISBN = {978-1-57146-413-2},
   MRCLASS = {53C20 (58E12)},
  MRNUMBER = {4479726},
MRREVIEWER = {Harish\ Seshadri},
}
			
	\bib{Xu-IOCII}{article}{
				AUTHOR = {Xu, Guoyi },
				TITLE = {Integral of scalar curvature on manifolds with a pole},
				JOURNAL = {to appear in Proceedings of the American Mathematical Society,  arXiv:2306.07460v1 [math.DG]},
				URL = { https://doi.org/10.48550/arXiv.2306.07460,  https://doi.org/10.1090/proc/16584},
			}		
			
\bib{Yau-linear}{article}{
     author={Yau, Shing-Tung},
     title= {Harmonic functions on complete Riemannian manifolds},
     journal= {Comm. Pure Appl. Math.},
     volume= {28},
     date    = {1975},
     pages = {201--228},
}				
								
\bib{Yau}{article}{
    author={Yau, Shing-Tung},
    title={Open problems in geometry},
    journal={Chern---a great geometer of the twentieth century, Int. Press, Hong Kong},
    date={1992},
    pages={275--319},
}

\bib{Zhu}{article}{
    AUTHOR = {Zhu, Bo},
     TITLE = {Geometry of positive scalar curvature on complete manifold},
   JOURNAL = {J. Reine Angew. Math.},
  FJOURNAL = {Journal f\"{u}r die Reine und Angewandte Mathematik. [Crelle's
              Journal]},
    VOLUME = {791},
      YEAR = {2022},
     PAGES = {225--246},
      ISSN = {0075-4102},
   MRCLASS = {53C21 (53C23)},
  MRNUMBER = {4489630},
       DOI = {10.1515/crelle-2022-0049},
       URL = {https://doi.org/10.1515/crelle-2022-0049},
}

\bib{Zhu-bubble}{article}{
    AUTHOR = {Zhu, Jintian},
     TITLE = {Width estimate and doubly warped product},
   JOURNAL = {Trans. Amer. Math. Soc.},
  FJOURNAL = {Transactions of the American Mathematical Society},
    VOLUME = {374},
      YEAR = {2021},
    NUMBER = {2},
     PAGES = {1497--1511},
      ISSN = {0002-9947,1088-6850},
   MRCLASS = {53C21 (53C24)},
  MRNUMBER = {4196400},
MRREVIEWER = {Renato\ G.\ Bettiol},
       DOI = {10.1090/tran/8263},
       URL = {https://doi.org/10.1090/tran/8263},
}

\end{biblist}
\end{bibdiv}

\end{document}